\DeclareMathOperator*{\colim}{colim}
\DeclareMathOperator*{\utimes}{\times}
\DeclareMathAlphabet{\mathpzc}{OT1}{pzc}{m}{it}
\newtheorem{cor}[subsubsection]{Corollary}
\newtheorem{lem}[subsubsection]{Lemma}
\newtheorem{prop}[subsubsection]{Proposition}
\newtheorem{conj}[subsubsection]{Conjecture}
\newtheorem{thm}[subsubsection]{Theorem}
\newtheorem{mainthm}{Theorem}
\newenvironment{mainthmbis}[1]
  {%
   \addtocounter{mainthm}{-1}%
   \begin{mainthm}}
  {\end{mainthm}}
\newtheorem{defn}[subsubsection]{Definition}
\theoremstyle{remark}
\newtheorem{rem}[subsubsection]{Remark}
\newtheorem{example}[subsubsection]{Example}
\theoremstyle{remark}
\numberwithin{equation}{section}
\newcommand{\nc}{\newcommand}
\nc{\renc}{\renewcommand}
\nc{\ssec}{\subsection}
\nc{\sssec}{\subsubsection}
\nc{\on}{\operatorname}
\nc\ol{\overline}
\nc\wt{\widetilde}
\nc\tboxtimes{\wt{\boxtimes}}
\nc\tstar{\wt{\star}}
\nc{\alp}{\alpha}
\nc{\ZZ}{{\mathbb Z}}
\nc{\NN}{{\mathbb N}}
\nc{\OO}{{\mathbb O}}
\renc{\SS}{{\mathbb S}}
\nc{\DD}{{\mathbb D}}
\nc{\GG}{{\mathbb G}}
\renewcommand{\AA}{{\mathbb A}}
\nc{\Fq}{{\mathbb F}_q}
\nc{\Fqb}{\ol{{\mathbb F}_q}}
\nc{\Ql}{\ol{{\mathbb Q}_\ell}}
\nc{\id}{\mathsf{id}}
\nc\X{\mathcal X}
\nc{\Hom}{\on{Hom}}
\nc{\Lie}{\on{Lie}}
\nc{\Loc}{\on{Loc}}
\nc{\Pic}{\on{Pic}}
\nc{\Bun}{\on{Bun}}
\nc{\IC}{\on{IC}}
\nc{\Aut}{\on{Aut}}
\nc{\rk}{\on{rk}}
\nc{\Sh}{\on{Sh}}
\nc{\Perv}{\on{Perv}}
\nc{\pos}{{\on{pos}}}
\nc{\Conv}{\on{Conv}}
\nc{\Sph}{\on{Sph}}
\nc{\Sym}{\on{Sym}}
\nc{\BunBb}{\overline{\Bun}_B}
\nc{\BunNb}{\overline{\Bun}_N}
\nc{\BunTb}{\overline{\Bun}_T}
\nc{\BunBbm}{\overline{\Bun}_{B^-}}
\nc{\BunBbel}{\overline{\Bun}_{B,el}}
\nc{\BunBbmel}{\overline{\Bun}_{B^-,el}}
\nc{\Buno}{\overset{o}{\Bun}}
\nc{\BunPb}{{\overline{\Bun}_P}}
\nc{\BunBM}{\Bun_{B(M)}}
\nc{\BunBMb}{\overline{\Bun}_{B(M)}}
\nc{\BunPbw}{{\widetilde{\Bun}_P}}
\nc{\BunBP}{\widetilde{\Bun}_{B,P}}
\nc{\GUb}{\overline{G/U}}
\nc{\GUPb}{\overline{G/U(P)}}
\nc\syminfty{\on{Sym}^{\infty}}
\nc\lal{\ol{\lambda}}
\nc\xl{\ol{x}}
\nc\thl{\ol{\theta}}
\nc\nul{\ol{\nu}}
\nc\mul{\ol{\mu}}
\nc{\oX}{\overset{\circ}{X}{}}
\nc{\hl}{\overset{\leftarrow}h{}}
\nc{\hr}{\overset{\rightarrow}h{}}
\nc{\M}{{\mathcal M}}
\nc{\N}{{\mathcal N}}
\nc{\F}{{\mathcal F}}
\nc{\D}{{\mathcal D}}
\nc{\Y}{{\mathcal Y}}
\nc{\G}{{\mathcal G}}
\nc{\E}{{\mathcal E}}
\nc{\CalC}{{\mathcal C}}
\nc\Dh{\widehat{\D}}
\renewcommand{\O}{{\mathcal O}}
\nc{\K}{{\mathcal K}}
\renewcommand{\S}{{\mathcal S}}
\nc{\T}{{\mathcal T}}
\nc{\V}{{\mathcal V}}
\renc{\P}{{\mathcal P}}
\nc{\A}{{\mathcal A}}
\nc{\U}{{\mathcal U}}
\nc{\frn}{{\check{\mathfrak u}(P)}}
\nc{\fC}{\mathfrak C}
\nc\f{{\mathfrak f}}
\nc{\qo}{{\mathfrak q}}
\nc{\po}{{\mathfrak p}}
\nc{\s}{{\mathfrak s}}
\nc\w{\text{w}}
\renewcommand{\r}{{\mathfrak r}}
\renewcommand{\mod}{{\on{-}\mathsf{mod}}}
\nc\Spec{\on{Spec}}
\nc\Mod{\on{Mod}}
\nc{\tw}{\widetilde{\mathfrak t}}
\nc{\pw}{\widetilde{\mathfrak p}}
\nc{\qw}{\widetilde{\mathfrak q}}
\nc{\jw}{\widetilde j}
\nc{\grb}{\overline{\Gr_{X^{\fset}}}}
\nc{\I}{\mathcal I}
\renewcommand{\i}{\mathfrak i}
\renewcommand{\j}{\mathfrak j}
\nc{\lambdach}{{\check\lambda}}
\nc{\Lambdach}{{\check\Lambda}{}}
\nc{\much}{{\check\mu}}
\nc{\omegach}{{\check\omega}}
\nc{\nuch}{{\check\nu}}
\nc{\etach}{{\check\eta}}
\nc{\alphach}{{\check\alpha}}
\nc{\rhoch}{{\check\rho}}
\nc{\Hb}{\overline{\H}}
\nc{\BA}{{\mathbb{A}}}
\nc{\BB}{\mathbb{B}}
\nc{\BC}{{\mathbb{C}}}
\nc{\BD}{{\mathbb{D}}}
\nc{\BE}{{\mathbb{E}}}
\nc{\BF}{{\mathbb{F}}}
\nc{\BG}{{\mathbb{G}}}
\nc{\BH}{{\mathbb{H}}}
\nc{\BI}{{\mathbb{I}}}
\nc{\BM}{{\mathbb{M}}}
\nc{\BN}{{\mathbb{N}}}
\nc{\BO}{{\mathbb{O}}}
\nc{\BP}{{\mathbb{P}}}
\nc{\BQ}{{\mathbb{Q}}}
\nc{\BR}{{\mathbb{R}}}
\nc{\BS}{{\mathbb{S}}}
\nc{\BT}{{\mathbb{T}}}
\nc{\BV}{{\mathbb{V}}}
\nc{\BZ}{{\mathbb{Z}}}
\nc{\bbone}{\mathbbm{1}}
\nc{\bbA}{{\mathbb{A}}}
\nc{\bbB}{\mathbb{B}}
\nc{\bbC}{{\mathbb{C}}}
\nc{\bbD}{{\mathbb{D}}}
\nc{\bbE}{{\mathbb{E}}}
\nc{\bbF}{{\mathbb{F}}}
\nc{\bbG}{{\mathbb{G}}}
\nc{\bbH}{{\mathbb{H}}}
\nc{\bbI}{{\mathbb{I}}}
\nc{\bbK}{{\mathbb{K}}}
\nc{\bbL}{{\mathbb{L}}}
\nc{\bbM}{{\mathbb{M}}}
\nc{\bbN}{{\mathbb{N}}}
\nc{\bbO}{{\mathbb{O}}}
\nc{\bbP}{{\mathbb{P}}}
\nc{\bbQ}{{\mathbb{Q}}}
\nc{\bbR}{{\mathbb{R}}}
\nc{\bbS}{{\mathbb{S}}}
\nc{\bbT}{{\mathbb{T}}}
\nc{\bbU}{{\mathbb{U}}}
\nc{\bbV}{{\mathbb{V}}}
\nc{\bbW}{{\mathbb{W}}}
\nc{\bbX}{{\mathbb{X}}}
\nc{\bbY}{{\mathbb{Y}}}
\nc{\bbZ}{{\mathbb{Z}}}
\nc{\CA}{{\mathcal{A}}}
\nc{\CB}{{\mathcal{B}}}
\nc{\CE}{{\mathcal{E}}}
\nc{\CF}{{\mathcal{F}}}
\nc{\CH}{{\mathcal{H}}}
\nc{\CL}{{\mathcal{L}}}
\nc{\CC}{{\mathcal{C}}}
\nc{\CG}{{\mathcal{G}}}
\nc{\CM}{{\mathcal{M}}}
\nc{\CN}{{\mathcal{N}}}
\nc{\CK}{{\mathcal{K}}}
\nc{\CO}{{\mathcal{O}}}
\nc{\CP}{{\mathcal{P}}}
\nc{\CQ}{{\mathcal{Q}}}
\nc{\CR}{{\mathcal{R}}}
\nc{\CS}{{\mathcal{S}}}
\nc{\CU}{{\mathcal{U}}}
\nc{\CV}{{\mathcal{V}}}
\nc{\CW}{{\mathcal{W}}}
\nc{\CX}{{\mathcal{X}}}
\nc{\CY}{{\mathcal{Y}}}
\nc{\CZ}{{\mathcal{Z}}}
\nc{\CI}{{\mathcal{I}}}
\nc{\csM}{{\check{\mathcal A}}{}}
\nc{\oM}{{\overset{\circ}{\mathcal M}}{}}
\nc{\obM}{{\overset{\circ}{\mathbf M}}{}}
\nc{\oCA}{{\overset{\circ}{\mathcal A}}{}}
\nc{\obA}{{\overset{\circ}{\mathbf A}}{}}
\nc{\ooM}{{\overset{\circ}{M}}{}}
\nc{\osM}{{\overset{\circ}{\mathsf M}}{}}
\nc{\vM}{{\overset{\bullet}{\mathcal M}}{}}
\nc{\nM}{{\underset{\bullet}{\mathcal M}}{}}
\nc{\oD}{{\overset{\circ}{\mathcal D}}{}}
\nc{\obC}{{\overset{\circ}{\mathbf C}}{}}
\nc{\obD}{{\overset{\circ}{\mathbf D}}{}}
\nc{\oA}{{\overset{\circ}{\mathbb A}}{}}
\nc{\op}{{\overset{\bullet}{\mathbf p}}{}}
\nc{\oU}{{\overset{\bullet}{\mathcal U}}{}}
\nc{\oZ}{{\overset{\circ}{\mathcal Z}}{}}
\nc{\ofZ}{{\overset{\circ}{\mathfrak Z}}{}}
\nc{\oF}{{\overset{\circ}{\fF}}}
\nc{\fa}{{\mathfrak{a}}}
\nc{\fb}{{\mathfrak{b}}}
\nc{\fc}{{\mathfrak{c}}}
\nc{\fd}{{\mathfrak{d}}}
\nc{\ff}{{\mathfrak{f}}}
\nc{\fg}{{\mathfrak{g}}}
\nc{\fgl}{{\mathfrak{gl}}}
\nc{\fh}{{\mathfrak{h}}}
\nc{\fj}{{\mathfrak{j}}}
\nc{\fl}{{\mathfrak{l}}}
\nc{\fm}{{\mathfrak{m}}}
\nc{\fn}{{\mathfrak{n}}}
\nc{\fu}{{\mathfrak{u}}}
\nc{\fp}{{\mathfrak{p}}}
\nc{\fr}{{\mathfrak{r}}}
\nc{\fs}{{\mathfrak{s}}}
\nc{\ft}{{\mathfrak{t}}}
\nc{\fz}{{\mathfrak{z}}}
\nc{\fsl}{{\mathfrak{sl}}}
\nc{\hsl}{{\widehat{\mathfrak{sl}}}}
\nc{\hgl}{{\widehat{\mathfrak{gl}}}}
\nc{\hg}{{\widehat{\mathfrak{g}}}}
\nc{\chg}{{\widehat{\mathfrak{g}}}{}^\vee}
\nc{\hn}{{\widehat{\mathfrak{n}}}}
\nc{\chn}{{\widehat{\mathfrak{n}}}{}^\vee}
\nc{\fA}{{\mathfrak{A}}}
\nc{\fB}{{\mathfrak{B}}}
\nc{\fD}{{\mathfrak{D}}}
\nc{\fE}{{\mathfrak{E}}}
\nc{\fF}{{\mathfrak{F}}}
\nc{\fG}{{\mathfrak{G}}}
\nc{\fK}{{\mathfrak{K}}}
\nc{\fL}{{\mathfrak{L}}}
\nc{\fM}{{\mathfrak{M}}}
\nc{\fN}{{\mathfrak{N}}}
\nc{\fP}{{\mathfrak{P}}}
\nc{\fU}{{\mathfrak{U}}}
\nc{\fV}{{\mathfrak{V}}}
\nc{\fX}{{\mathfrak{X}}}
\nc{\fY}{{\mathfrak{Y}}}
\nc{\fZ}{{\mathfrak{Z}}}
\nc{\bb}{{\mathbf{b}}}
\nc{\bc}{{\mathbf{c}}}
\nc{\bd}{{\mathbf{d}}}
\nc{\bbf}{{\mathbf{f}}}
\nc{\be}{{\mathbf{e}}}
\nc{\bg}{{\mathbf{g}}}
\nc{\bi}{{\mathbf{i}}}
\nc{\bj}{{\mathbf{j}}}
\nc{\bn}{{\mathbf{n}}}
\nc{\bo}{{\mathbf{o}}}
\nc{\bp}{{\mathbf{p}}}
\nc{\bq}{{\mathbf{q}}}
\nc{\bt}{{\mathbf{t}}}
\nc{\bu}{{\mathbf{u}}}
\nc{\bv}{{\mathbf{v}}}
\nc{\bx}{{\mathbf{x}}}
\nc{\bs}{{\mathbf{s}}}
\nc{\by}{{\mathbf{y}}}
\nc{\bw}{{\mathbf{w}}}
\nc{\bA}{{\mathbf{A}}}
\nc{\bK}{{\mathbf{K}}}
\nc{\bB}{{\mathbf{B}}}
\nc{\bC}{{\mathbf{C}}}
\nc{\bG}{{\mathbf{G}}}
\nc{\bD}{{\mathbf{D}}}
\nc{\bH}{{\mathbf{H}}}
\nc{\bM}{{\mathbf{M}}}
\nc{\bN}{{\mathbf{N}}}
\nc{\bO}{{\mathbf{O}}}
\nc{\bT}{{\mathbf{T}}}
\nc{\bV}{{\mathbf{V}}}
\nc{\bW}{{\mathbf{W}}}
\nc{\bX}{{\mathbf{X}}}
\nc{\bZ}{{\mathbf{Z}}}
\nc{\bS}{{\mathbf{S}}}
\nc{\sA}{{\mathsf{A}}}
\nc{\sB}{{\mathsf{B}}}
\nc{\sC}{{\mathsf{C}}}
\nc{\sD}{{\mathsf{D}}}
\nc{\sF}{{\mathsf{F}}}
\nc{\sG}{{\mathsf{G}}}
\nc{\sK}{{\mathsf{K}}}
\nc{\sM}{{\mathsf{M}}}
\nc{\sO}{{\mathsf{O}}}
\nc{\sW}{{\mathsf{W}}}
\nc{\sQ}{{\mathsf{Q}}}
\nc{\sP}{{\mathsf{P}}}
\nc{\sV}{{\mathsf{V}}}
\nc{\sS}{{\mathsf{S}}}
\nc{\sT}{{\mathsf{T}}}
\nc{\sZ}{{\mathsf{Z}}}
\nc{\sfp}{{\mathsf{p}}}
\nc{\sll}{{\mathsf{l}}}
\nc{\sr}{{\mathsf{r}}}
\nc{\bk}{{\mathsf{k}}}
\nc{\sg}{{\mathsf{g}}}
\nc{\sff}{{\mathsf{f}}}
\nc{\sfb}{{\mathsf{b}}}
\nc{\sfc}{{\mathsf{c}}}
\nc{\sd}{{\mathsf{d}}}
\nc{\se}{{\mathsf{e}}}
\nc{\tA}{{\widetilde{\mathbf{A}}}}
\nc{\tB}{{\widetilde{\mathcal{B}}}}
\nc{\tg}{{\widetilde{\mathfrak{g}}}}
\nc{\tG}{{\widetilde{G}}}
\nc{\TM}{{\widetilde{\mathbb{M}}}{}}
\nc{\tO}{{\widetilde{\mathsf{O}}}{}}
\nc{\tU}{{\widetilde{\mathfrak{U}}}{}}
\nc{\TZ}{{\tilde{Z}}}
\nc{\tx}{{\tilde{x}}}
\nc{\tbv}{{\tilde{\bv}}}
\nc{\tfP}{{\widetilde{\mathfrak{P}}}{}}
\nc{\tz}{{\tilde{\zeta}}}
\nc{\tmu}{{\tilde{\mu}}}
\nc{\urho}{\underline{\rho}}
\nc{\uB}{\underline{B}}
\nc{\uC}{{\underline{\mathbb{C}}}}
\nc{\ui}{\underline{i}}
\nc{\uj}{\underline{j}}
\nc{\ofP}{{\overline{\mathfrak{P}}}}
\nc{\oB}{{\overline{\mathcal{B}}}}
\nc{\og}{{\overline{\mathfrak{g}}}}
\nc{\oI}{{\overline{I}}}
\nc{\eps}{\varepsilon}
\nc{\hrho}{{\hat{\rho}}}
\nc{\one}{{\mathbf{1}}}
\nc{\two}{{\mathbf{t}}}
\nc{\Rep}{{\mathop{\operatorname{\rm Rep}}}}
\nc{\Tot}{{\mathop{\operatorname{\rm Tot}}}}
\nc{\Ker}{{\mathop{\operatorname{\rm Ker}}}}
\nc{\Hilb}{{\mathop{\operatorname{\rm Hilb}}}}
\nc{\Ext}{{\mathop{\operatorname{\rm Ext}}}}
\nc{\CHom}{{\mathop{\operatorname{{\mathcal{H}}\it om}}}}
\nc{\GL}{{\mathop{\operatorname{\rm GL}}}}
\nc{\gr}{{\mathop{\operatorname{\rm gr}}}}
\nc{\Id}{{\mathop{\operatorname{\rm Id}}}}
\nc{\de}{{\mathop{\operatorname{\rm def}}}}
\nc{\length}{{\mathop{\operatorname{\rm length}}}}
\nc{\supp}{{\mathop{\operatorname{\rm supp}}}}
\nc{\Cliff}{{\mathsf{Cliff}}}
\nc{\Fl}{\on{Fl}}
\nc{\Fib}{{\mathsf{Fib}}}
\nc{\Coh}{{\on{Coh}}}
\nc{\coh}{{\on{coh}}}
\nc{\QCoh}{{\on{QCoh}}}
\nc{\IndCoh}{{\on{IndCoh}}}
\nc{\FCoh}{{\mathsf{FCoh}}}
\nc{\reg}{{\text{\rm reg}}}
\nc{\cplus}{{\mathbf{C}_+}}
\nc{\cminus}{{\mathbf{C}_-}}
\nc{\cthree}{{\mathbf{C}_*}}
\nc{\Qbar}{{\bar{Q}}}
\nc\Eis{\on{Eis}}
\nc\CT{\on{CT}}
\nc\Eisb{\ol\Eis{}}
\nc\Eisr{\on{Eis}^{rat}{}}
\nc\wh{\widehat}
\nc{\Def}{\on{Def_{\check{\fb}}(E)}}
\nc{\barZ}{\overline{Z}{}}
\nc{\barbarZ}{\overline{\barZ}{}}
\nc{\barpi}{\overline\pi}
\nc{\barbarpi}{\overline\barpi}
\nc{\barpip}{\overline\pi{}^+}
\nc{\barpim}{\overline\pi{}^-}
\nc{\fq}{\mathfrak q}
\nc{\fqb}{\ol{\fq}{}}
\nc{\fpb}{\ol{\fp}{}}
\nc{\fpr}{{\fp^{rat}}{}}
\nc{\fqr}{{\fq^{rat}}{}}
\nc{\hattimes}{\wh\otimes}
\nc{\bh}{{\bar{h}}}
\nc{\bOmega}{{\overline{\Omega(\check \fn)}}}
\nc{\seq}[1]{\stackrel{#1}{\sim}}
\nc{\cT}{{\check{T}}}
\nc{\cG}{{\check{G}}}
\nc{\cM}{{\check{M}}}
\nc{\cB}{{\check{B}}}
\nc{\cP}{{\check{P}}}
\nc{\ct}{{\check{\mathfrak t}}}
\nc{\cg}{{\check{\fg}}}
\nc{\cb}{{\check{\fb}}}
\nc{\cn}{{\check{\fn}}}
\nc{\cp}{{\check{\fp}}}
\nc{\cm}{{\check{\fm}}}
\nc{\cLambda}{{\check\Lambda}}
\nc{\cla}{{\check\lambda}}
\nc{\cmu}{{\check\mu}}
\nc{\cnu}{{\check\nu}}
\nc{\ceta}{{\check\eta}}
\nc{\DefbE}{{\on{Def}_{\cB}(E_\cT)}}
\nc{\imathb}{{\ol{\imath}}}
\nc{\rlr}{\overset{\longrightarrow}{\underset{\longrightarrow}\longleftarrow}}
\nc{\oBun}{\overset{\circ}\Bun}
\nc{\BunBbb}{\ol{\ol{Bun}}_B}
\nc{\BunBr}{\Bun_B^{rat}}
\nc{\BunBrsg}{\Bun_B^{rat,\on{s.g.}}}
\nc{\BunBrp}{\Bun_B^{rat,polar}}
\nc{\BunBrpbg}{\Bun_B^{rat,polar,\on{b.g.}}}
\nc{\BunBrpsg}{\Bun_B^{rat,polar,\on{s.g.}}}
\nc{\BunTrp}{\Bun_T^{rat,polar}}
\nc{\BunTrpbg}{\Bun_T^{rat,polar,\on{b.g.}}}
\nc{\BunTrpsg}{\Bun_T^{rat,polar,\on{s.g.}}}
\nc{\BunNr}{\Bun_N^{rat}}
\nc{\BunNre}{\Bun_N^{enh,rat}}
\nc{\BunTr}{\Bun_T^{rat}}
\nc{\Vect}{\on{Vect}}
\nc{\Whit}{\on{Whit}}
\nc{\bTb}{\ol{\on{CT}}}
\nc{\bTr}{\on{CT}^{rat}{}}
\nc\jmathr{\jmath^{rat}{}}
\nc{\ux}{\underline{x}}
\nc{\clambda}{{\check\lambda}}
\nc{\calpha}{{\check\alpha}}
\nc{\inftyGrpd}{{\mathsf{Grpd}_\infty}}
\nc{\fset}{\mathsf{fSet}}
\nc{\fSet}{\fset}
\nc{\LocSysG}{\LocSys_{\cG}}
\nc{\Sing}{{\on{Sing}}}
\nc{\dr}{{\on{dR}}}
\nc{\Ind}{\on{Ind}}
\nc{\Sat}{\on{Sat}}
\nc{\Ho}{\on{Ho}}
\nc{\Res}{\on{Res}}
\nc{\sotimes}{\overset{!}\otimes}
\nc{\mmod}{{\on{-}}{\mathbf{mod}}}
\nc{\Maps}{\on{Maps}}
\nc{\CMaps}{{\mathcal Maps}}
\nc{\bMaps}{{\mathbf{Maps}}}
\nc{\dgSch}{\on{DGSch}}
\nc{\dgindSch}{\on{DGindSch}}
\nc{\indSch}{\on{indSch}}
\nc{\Sch}{\mathsf{Sch}}
\nc{\affdgSch}{\on{DGSch}^{\on{aff}}}
\nc{\affSch}{\on{Sch}^{\on{aff}}}
\nc{\Groupoids}{\on{Grpd}}
\nc{\inftypic}{\infty\on{-PicGrpd}}
\nc{\inftyCat}{{\mathsf{Cat}_{\infty}}}
\nc{\MoninftyCat}{\infty\on{-Cat}^{Mon}}
\nc{\SymMoninftyCat}{\infty\on{-Cat}^{\on{SymMon}}}
\nc{\SymMonStinftyCat}{\on{DGCat}^{\on{SymMon}}}
\nc{\MonStinftyCat}{\on{DGCat}^{Mon}}
\nc{\inftystack}{\on{Stk}}
\nc{\inftystackalg}{Stk^{1\text{-}alg}}
\nc{\inftyprestack}{\on{PreStk}}
\nc{\inftydgnearstack}{\on{NearStk}}
\nc{\inftydgstack}{\on{Stk}}
\nc{\inftydgstackalg}{DGStk^{1\text{-}alg}}
\nc{\inftydgprestack}{\on{PreStk}}
\nc{\HC}{\CH\bC}
\nc{\csupp}{\supp}
\nc{\Arth}{\on{Arth}}
\nc{\ArthG}{{\on{Arth}_\cG}}
\nc{\ul}{\underline}
\renc{\neg}{\on{neg}}
\nc{\Congr}{\bK^1}
\nc{\CongrN}{\bN^1}
\nc{\CongrT}{\bT^1}
\nc{\CongrNneg}{(\bN^-)^1}
\nc{\Z}{\mathcal{Z}}
\nc{\calN}{\N}
\nc{\calW}{\mathcal{W}}
\nc{\calF}{\mathcal{F}}
\nc{\calH}{\mathcal{H}}
\nc{\calO}{\mathcal{O}}
\nc{\calK}{\mathcal{K}}
\nc{\Ran}{\mathsf{Ran}}
\nc{\Jets}{\on{Jets}}
\nc{\act}{\mathsf{act}}
\nc{\Av}{\mathsf{Av}}
\nc{\Ad}{\on{Ad}}
\nc{\BGRan}{BG_{\Ran}}
\nc{\codim}{\on{codim}}
\nc{\cpt}{{\on{cpt}}}
\nc{\dR}{{\on{dR}}}
\nc{\DGCat}{\mathsf{DGCat}}
\nc{\DGCatcont}{\on{DGCat}_{cont}}
\nc{\glob}{{\on{glob}}}
\nc{\loc}{{\on{loc}}}
\renewcommand{\op}{{\on{op}}}
\nc{\pt}{{\on{pt}}}
\nc{\PreStk}{{\mathsf{PreStk}}}
\nc{\Cat}{{\mathsf{Cat}}}
\nc{\ShvCat}{{\mathsf{ShvCat}}}
\nc{\restr}[2]{\left. #1 \right |_{#2}}
\nc{\uprestr}[2]{\left. #1 \right |^{#2}}
\nc{\bLoc}{{\mathbf{Loc}}}
\nc{\bGamma}{{\mathbf{\Gamma}}}
\nc{\bLocA}{\mathbf{Loc}^\A}
\nc{\bGammaA}{\mathbf{\Gamma}^\A}
\nc{\bLocB}{\mathbf{Loc}^\B}
\nc{\bGammaB}{\mathbf{\Gamma}^\B}
\nc{\bLocH}{\mathbf{Loc}^\H}
\nc{\bGammaH}{\mathbf{\Gamma}^\H}
\nc{\gen}{\mathit{gen}}
\nc{\ggen}{\!\on{-}\!\mathit{gen}}
\nc{\hto}{\hookrightarrow}
\nc{\ext}{\mathsf{ext}}
\nc{\ev}{\mathsf{ev}}
\nc{\ins}{\mathsf{ins}}
\nc{\rat}{\mathsf{rat}}
\nc{\usotimes}[1]{\underset{#1}{\otimes}}
\nc{\ch}{{\mathfrak{ch}}}
\renc{\fD}{{\Dmod}}
\nc{\fH}{{\mathfrak{H}}}
\nc{\p}{{\mathfrak{p}}}
\renc{\r}{{\mathfrak{r}}}
\nc{\xto}{\xrightarrow}
\renc{\sec}{\section}
\nc{\enh}{{\on{enh}}}
\renc{\gen}{\mathsf{gen}}
\nc{\BunGBgen}{\Bun_G^{B-\gen}}
\nc{\BunGHgen}{\Bun_G^{H-\gen}}
\nc{\BunGNgen}{\Bun_G^{N-\gen}}
\nc{\Fun}{\mathsf{Fun}}
\nc{\End}{\mathsf{End}}
\nc{\lr}{\xymatrix{ \ar@<-0.4ex>[r] \ar@<.5ex>[l]  & } }
\nc{\rr}{\xymatrix{ \ar@<-0.2ex>[r] \ar@<.7ex>[r]  & } }
\nc{\rrr}{\xymatrix{ \ar@<.0ex>[r] \ar@<.7ex>[r] \ar@<-0.7ex>[r] & } }
\nc{\Stab}{\mathsf{Stab}}
\nc{\Orb}{\mathsf{Orb}}
\renc{\exp}{\mathit{exp}}
\renc{\q}{\mathfrak{q}}
\nc{\virg}[1]{``#1"}
\nc{\QA}[2]
{\textbf{Question:} {#1} 
\\
\textbf{Answer:} {#2}}
\renc{\bold}[1]{\boldsymbol{#1}}
\nc{\bigt}[1]{\big( #1 \big) }
\nc{\Bigt}[1]{\Big( #1 \Big) }
\nc{\extwhit}{{\CW h}(G,\mathsf{ext})}
\nc{\footcite}{\footnote}
\nc{\GA}{{G(\AA)}}
\nc{\GO}{{G(\OO)}}
\nc{\GK}{G(\KK)}
\nc{\NK}{{N(\bbK)}}
\nc{\Shv}{\mathsf{Shv}}
\nc{\inc}{\mathsf{inc}}
\nc{\Par}{\mathsf{Par}}
\renc{\i}{\mathfrak{i}}
\nc{\NA}{N(\AA)}
\nc{\VA}{V(\AA)}
\nc{\Glue}{\mathsf{Glue}}
\nc{\laxlim}{\text{laxlim}}
\nc{\FT}{\mathsf{FT}}
\nc{\out}{\mathsf{out}}
\nc{\hol}{\mathsf{hol}}
\nc{\Hol}{\on{Hol}}
\nc{\add}{\mathsf{add}}
\nc{\sto}{\rightsquigarrow}
\nc{\squigto}{\rightsquigarrow}
\nc{\fW}{\mathfrak{W}}
\nc{\vrho}{\varrho}
\nc{\counit}{\mathsf{counit}}
\nc{\unit}{\mathsf{unit}}
\nc{\corr}{\mathsf{corr}}
\nc{\Corr}{\mathsf{Corr}}
\nc{\IndSch}{\mathsf{IndSch}}
\nc{\Tate}{{\mathsf{Tate}}}
\nc{\surjto}{\twoheadrightarrow}
\renc{\j}{\mathfrak{j}}
\nc{\J}{\mathcal{J}}
\nc{\pro}{\mathsf{pro}}
\nc{\fty}{\mathsf{ft}}
\nc{\Pro}{\mathsf{Pro}}
\nc{\coact}{\mathsf{coact}}
\nc{\aff}{\mathsf{aff}}
\nc{\Nilp}{\on{Nilp}}
\nc{\Gch}{{\check{G}}}
\nc{\Pch}{{\check{P}}}
\nc{\Mch}{{\check{M}}}
\nc{\Qch}{{\check{Q}}}
\nc{\LL}{\mathbb{L}}
\nc{\x}{\varkappa} 
\nc{\Otimes}{\boldsymbol{\otimes}}
\nc{\Times}{\boldsymbol{\times}}
\nc{\flip}{\text{<}}
\nc{\coeffRan}{\mathsf{coeff}^{\Ran}}
\nc{\Ha}{H(\sA)}
\nc{\Groups}{\mathsf{Groups}}
\nc{\Groth}{\mathsf{Groth}}
\nc{\rlto}{\rightleftarrows}
\nc{\DGCatRan}{\ShvCatCrys(\Ran)}
\nc{\longto}{\longrightarrow}
\nc{\LS}{{\on{LS}}}
\renc{\Jets}{\mathsf{Jets}}
\nc{\mer}{\mathsf{mer}}
\nc{\W}{\mathcal{W}}
\nc{\Sect}{\mathsf{Sect}}
\renc{\Maps}{\mathsf{Maps}}
\renc{\bf}{\mathbf{f}}
\nc{\y}{\mathtt{y}}
\renc{\x}{\mathtt{x}}
\nc{\un}{{\it un}}
\nc{\indep}{\mathsf{indep}}
\nc{\CoAlg}{\mathsf{CoAlg}}
\nc{\coeff}{\mathsf{coeff}}
\nc{\R}{\mathcal{R}}
\renc{\hat}{\widehat}
\nc{\TK}{T(\mathsf{K})} 
\nc{\TtKK}{\Tt(\mathpzc{K})} 
\nc{\TtK}{\Tt(\mathsf{K})} 
\nc{\KK}{\mathbb K}
\nc{\Dmod}{\mathfrak{D}}
\nc{\curs}[1]{\mathpzc{#1}}
\nc{\Bshv}{\bold{\B}}
\nc{\Bind}{\H_{\indep}}
\nc{\BRan}{\H_{\Ran}}
\nc{\ARan}{\A_{\Ran}}
\nc{\Aind}{\A_{\indep}}
\nc{\GrRan}{\Gr}
\nc{\Gr}{\mathsf{Gr}}
\nc{\GrGRan}{\Gr_{G}}
\nc{\GrGind}{\Gr_{G}^{\indep}}
\nc{\Grind}[1]{\Gr_{#1}^{\indep} }
\nc{\GrGdom}{\curs{Gr}_G}
\nc{\GMapsRan}[1]{\mathsf{GMaps}(X,{#1})}
\nc{\GSectRan}[1]{\mathsf{GSect}({#1}/X)}
\nc{\GMapsind}[1]{\mathsf{GMaps}(X,{#1})^\indep}
\nc{\GSectind}[1]{\mathsf{GSect}({#1}/X)^\indep}
\nc{\GMapsdom}[1]{\curs{GMaps}(X,{#1})}
\nc{\GSectdom}[1]{\curs{GSect}({#1}/X)}
\nc{\chind}{\ch^{\indep}}
\nc{\chdom}{\curs{ch}}
\nc{\QSect}[1]{\curs{QSect}(#1/X)} 
\nc{\QMaps}[1]{\curs{QMaps}(X,#1)} 
\nc{\Zar}{\mathit{Zar}}
\nc{\loccit}{\textit{loc.$\,$cit.}}
\nc{\Crys}{\on{Crys}}
\nc{\ShvCatCrys}{\ShvCat^{\Crys}}
\nc{\BPE}{{\BP E}}
\nc{\BVE}{{\BV E}}
\nc{\BBE}{{\BB E}}
\newcommand{\mapsfrom}{\mathrel{\reflectbox{\ensuremath{\mapsto}}}}
\nc{\Wh}{{{\CW}h}}
\nc{\ChiralCat}{\mathsf{ChiralCat}}
\nc{\RRep}{\mathfrak{R}ep}
\nc{\SSph}{\mathfrak{S}ph}
\nc{\tto}{\twoheadrightarrow}
\nc{\disj}{{\mathsf{disj}}}
\nc{\C}{\CC}
\nc{\Tch}{{\check{T}}}
\nc{\triv}{\mathsf{triv}}
\nc{\Alg}{\mathsf{Alg}}
\nc{\CAlg}{\mathsf{CAlg}}
\nc{\Spread}{\mathsf{Spread}}
\nc{\Dom}{\mathsf{Dom}}
\nc{\Jac}{\on{Jac}}
\renc{\CD}[1]{{#1}^{\on{CD}}}
\nc{\String}{\on{String}}
\renc{\min}{{\mathit{min}}}
\nc{\rrep}{\on-\!\mathbf{rep}}
\nc{\WWh}{\mathfrak{W}h}
\nc{\Grpd}{\mathsf{Grpd}}
\nc{\timesdisj}{\overset{\circ}\times}
\renc{\NA}{N(\sA)}
\nc{\chiral}{\mathsf{chiral}}
\nc{\Hopf}{\mathsf{Hopf}}
\nc{\heart}{\heartsuit}
\nc{\kk}{\mathbbm{k}} 
\nc{\HHom}{\CH{om}} 
\nc{\Cone}{\on{Cone}}
\nc{\EE}{\mathbb{E}}
\renc{\HC}{{\on{HC}}}
\nc{\HH}{{\on{HH}}}
\nc{\even}{{\on{even}}}
\nc{\SingSupp}{\on{SingSupp}}
\nc{\Supp}{\on{Supp}}
\nc{\temp}{{\mathit{temp}}}
\nc{\antitemp}{{\mathit{anti\!\on{-}\!temp}}}
\nc{\lantitemp}{{}^{\mathit{anti\!\on{-}\!temp}}}
\nc{\aut}{{\mathit{aut}}}
\nc{\cusp}{{\mathit{cusp}}}
\nc{\geom}{{\on{geom}}}
\nc{\ren}{{\on{ren}}}
\nc{\naive}{{\on{naive}}}
\nc{\stargen}{{*\ggen}}
\nc{\Whitgen}{{\Whit\ggen}}
\nc{\spec}{{\on{spec}}}
\nc{\gch}{\mathfrak{\check{g}}}
\nc{\Hecke}{\on{Hecke}}
\nc{\LSGch}{{\LS_\Gch}}
\nc{\LSPch}{{\LS_\Pch}}
\nc{\LSMch}{{\LS_\Mch}}
\nc{\Hsx}[2]{\H_{{#1} \leftarrow {#2}}}
\nc{\Hdx}[2]{\H_{{#1} \to {#2}}}
\nc{\Hcorr}[3]{ \H_{{#1} \leftarrow {#2} \to {#3}} }
\nc{\Hopcorr}[3]{ \H_{{#1} \to {#2} \leftto {#3}} }
\nc{\ICohsx}[2]{\ICohW_{{#1} \leftarrow {#2}}}
\nc{\ICohdx}[2]{\ICohW_{{#1} \to {#2}}}
\nc{\ICohcorr}[3]{ \ICohW_{{#1} \leftarrow {#2} \to {#3}} }
\nc{\ICohopcorr}[3]{ \ICohW_{{#1} \to {#2} \leftto {#3}} }
\nc{\QCohsx}[2]{\QCohW_{{#1} \leftarrow {#2}}}
\nc{\QCohdx}[2]{\QCohW_{{#1} \to {#2}}}
\nc{\QCohcorr}[3]{ \QCohW_{{#1} \leftarrow {#2} \to {#3}} }
\nc{\QCohopcorr}[3]{ \QCohW_{{#1} \to {#2} \leftto {#3}} }
\renc{\AA}{\bbA}
\nc{\Asx}[2]{\AA_{{#1} \leftarrow {#2}}}
\nc{\Adx}[2]{\AA_{{#1} \to {#2}}}
\nc{\Acorr}[3]{ \AA_{{#1} \leftarrow {#2} \to {#3}} }
\nc{\Aopcorr}[3]{ \AA_{{#1} \to {#2} \leftto {#3}} }
\nc{\Bsx}[2]{\B_{{#1} \leftarrow {#2}}}
\nc{\Bdx}[2]{\B_{{#1} \to {#2}}}
\nc{\Bcorr}[3]{ \B_{{#1} \leftarrow {#2} \to {#3}} }
\nc{\Bopcorr}[3]{ \B_{{#1} \to {#2} \leftto {#3}} }
\nc{\ICohzero}[3]{\ICoh_0 \bigt{#1 \times_{{#2}_\dR} #3}}
\nc{\IndCohzero}{\ICohzero}
\nc{\form}[3]{#1 \times_{{#2}_\dR} #3 }
\nc{\ind}{{\mathsf{ind}}}
\nc{\oblv}{{\mathsf{oblv}}}
\nc{\Aff}{\mathsf{Aff}}
\nc{\dgAff}{\Aff}
\nc{\deloop}{\mathsf{deloop}}
\renc{\loop}{\mathsf{loop}}
\nc{\coev}{\mathsf{coev}}
\nc{\bE}{\mathbf{E}}
\nc{\ShvCatH}{{\ShvCat^{\bbH}}}
\nc{\ShvCatQW}{\ShvCat^{\QCohW}}
\nc{\bbimod}{\on{-}\mathbf{bimod}}
\nc{\Tw}{\mathsf{Tw}}
\nc{\Arr}{\mathsf{Arr}}
\nc{\bDelta}{\bold\Delta}
\nc{\BiCat}{\mathsf{BiCat}}
\nc{\Seg}{\mathsf{Seg}}
\nc{\Cart}{\mathsf{Cart}}
\nc{\Bimod}{\mathsf{Bimod}}
\nc{\lax}{\mathit{lax}}
\nc{\pr}{\mathsf{pr}}
\nc{\zero}{ \{ 0 \}   }
\nc{\Perf}{\on{Perf}}
\nc{\leftto}{\leftarrow}
\nc{\lto}{\leftto}
\nc{\xlto}[1]{\xleftarrow{#1}}
\nc{\ltemp}{{}^\temp}
\nc{\lcusp}{{}^\cusp}
\nc{\TwCorr}{\mathsf{TwCorr}}
\nc{\Affover}[1]{{\Aff_{/#1}}}
\nc{\Affoverop}[1]{{( \Affover{#1})^\op}}
\nc{\AffOver}[2]{{(\Aff_{#1})_{/#2}}}
\nc{\AffOverop}[2]{{( \AffOver{#1}{#2})^\op}}
\nc{\aft}{{\mathit{aft}}}
\renc{\vert}{{\mathit{vert}}}
\nc{\horiz}{{\mathit{horiz}}}
\nc{\type}{{\mathit{type}}}
\nc{\adm}{{\mathit{adm}}}
\nc{\g}{\mathfrak{g}}
\nc{\free}{\mathsf{free}}
\nc{\Sform}{{S \times_{S_\dR} S}}
\nc{\Yform}{{\Y \times_{\Y_\dR} \Y}}
\nc{\SdR}{ {S_{\dR}}}
\nc{\laft}{{\mathit{laft}}}
\nc{\Affevcocaft}{\Aff_{\aft}^{< \infty}}
\nc{\Affaftevcoc}{\Aff_{\aft}^{< \infty}}
\nc{\Affevcoclfp}{\Aff_{\lfp}^{< \infty}}
\nc{\Schevcoclfp }{\Sch_{\lfp}^{< \infty}}
\nc{\Schevcocaft}{\Sch_{\aft}^{< \infty}}
\nc{\Schaftevcoc}{\Sch_{\aft}^{< \infty}}
\nc{\Stkevcoc}{\Stk^{< \infty}}
\nc{\Stkevcoclfp}{\Stk_{\lfp}^{< \infty}}
\nc{\Stkperfevcoclfp}{\Stk_{\mathit{perf},\lfp}^{< \infty}}
\nc{\Stkperflfp}{\Stk_{\mathit{perf},\lfp}}
\nc{\Stklfp}{\Stk_{\lfp}}
\nc{\evcoc}{\mathit{e.c.}}
\nc{\ICoh}{\IndCoh}
\nc{\citep}{\cite}
\renc{\H}{\bbH}
\nc{\uno}{\mathbbm{1}}
\nc{\CohBig}{{\Coh^{-\infty}}}
\nc{\Tang}{\mathbb{T}}
\nc{\LieAlg}{\mathsf{LieAlg}}
\nc{\Serre}{{\on{Serre}}}
\nc{\MPreStk}{\mathsf{MPreStk}}
\nc{\all}{{\on{all}}}
\nc{\QCohwedge}{\bbQ^\wedge}
\nc{\ICohwedge}{\bbI^\wedge}
\nc{\ICohW}{\ICohwedge}
\nc{\QCohW}{\QCohwedge}
\nc{\ShvCatA}{\ShvCat^{\AA}}
\nc{\ShvCatB}{{\ShvCat^\B}}
\nc{\naiveto}{{\xto{\naive}}}
\nc{\conaiveto}{{\xto{\conaive}}}
\nc{\strong}{\mathit{strong}}
\nc{\costrong}{\mathit{costrong}}
\nc{\conv}{\mathit{conv}}
\nc{\Q}{\bbQ}
\nc{\bY}{\mathbf{Y}}
\nc{\Loop}{\mathsf{LOOP}}
\nc{\DG}{{\on{DG}}}
\nc{\coind}{\mathsf{coind}}
\nc{\co}{{\on{co}}}
\nc{\laftdef}{{\mathit{laft-def}}}
\nc{\qsmooth}{{\mathit{qs.smooth}}}
\nc{\smooth}{{\mathit{smooth}}}
\nc{\LKE}{\on{LKE}}
\nc{\RKE}{\on{RKE}}
\nc{\ShvCatAco}{\ShvCatA_{\co}}
\nc{\ShvCatHco}{\ShvCatH_{\co}}
\nc{\Stk}{\mathsf{Stk}}
\nc{\doubleCat}{\mathsf{doubleCat}}
\nc{\Spaces}{\mathcal{S}\!\mathit{paces}}
\nc{\ALG}{\mathsf{ALG}}
\nc{\MAPS}{\mathsf{MAPS}}
\nc{\CAT}{\mathsf{CAT}}
\nc{\oneCat}{{\Cat_{\1}}}
\nc{\oneCAT}{{\CAT_{\1}}}
\nc{\twoCat}{{\Cat_{\2}}}
\nc{\twoCAT}{{\CAT_{\2}}}
\nc{\DGCAT}{\mathsf{DGCAT}}
\nc{\twoCatDG}{{\CAT_{\2}^\DG}}
\nc{\twoCATDG}{{\CAT_{\2}^\DG}}
\nc{\twoCATDGw}{{\CAT_{\2, w*}^\DG}}
\nc{\twoCATDGww}{{\CAT_{\2, ww*}^\DG}}
\nc{\AlgBimod}{\Alg^{\mathit{bimod}}}
\nc{\AlgBimodDGCat}{\AlgBimod(\DGCat)}
\nc{\ALGBimod}{\ALG^{\mathit{bimod}}}
\nc{\twoAlgBimod}{\ALGBimod}
\nc{\rev}{{\on{rev}}}
\nc{\lfp}{{\mathit{lfp}}}
\nc{\RBeck}{{\on{R-BC}}}
\nc{\LBeck}{{\on{L-BC}}}
\nc{\schem}{\mathit{schem}}
\nc{\proper}{\mathit{proper}}
\nc{\res}{{\mathit{res}}}
\nc{\UQCoh}{\U^{\QCoh}}
\nc{\UQ}{\UQCoh}
\nc{\LieAlgbd}{{\on{Lie-algbd}}}
\nc{\LY}{{L\Y}}
\nc{\TangQ}{\Tang^{\QCoh}}
\nc{\Fil}{{\on{Fil}}}
\nc{\AssGr}{\on{assoc-gr}}
\nc{\Cech}{\on{Cech}}
\nc{\FormMod}{\mathsf{FormMod}}
\nc{\FormModunderStkevcoc} {\FormMod_{\Stk^{< \infty}/}^\lfp }
\nc{\vDmod}{\virg{\Dmod}}
\nc{\LSG}{\LS_G}
\nc{\LSM}{\LS_M}
\nc{\LSP}{\LS_P}
\nc{\LST}{\LS_T}
\nc{\LSB}{\LS_B}
\nc{\Gm}{{\GG_m}}
\nc{\GIT}{/\!/}
\renc{\t}{\ft}
\nc{\PP}{\bbP}
\nc{\Nglob}{\check{\N}_\glob}
\nc{\Psid}{\on{Ps-Id}}
\nc{\PsId}{\Psid}
\nc{\unshift}{\Rightarrow}
\nc{\coker}{\on{cone}}
\nc{\irred}{{\on{irred}}}
\nc{\red}{{\on{red}}}
\nc{\Spr}{{\on{Spr}}}
\nc{\DL}{\mathsf{DL}}
\nc{\St}{{\on{St}}}
\nc{\Glued}{{\mathsf{Glued}}}
\nc{\LSGchLeviirred}{\LS_\Gch^{\on{Levi-irred}}}
\nc{\LSLeviirred}{\LS_G^{\on{Levi-irred}}}
\nc{\olBun}{\ol{\Bun}}
\nc{\cl}{\on{cl}}
\nc{\RHom}{\on{RHom}}
\nc{\Poinc}{\on{Poinc}}
\nc{\Betti}{\on{Betti}}
\nc{\LSGcoarse}{\LS_{G,\on{coarse}}}
\nc{\mon}{{\Gm\on{-mon}}}
\nc{\TBunG}{\sT_{\Bun_G}}
\nc{\lperp}{{}^\perp}
\nc{\Levi}{\on{Levi}}
\nc{\Verdier}{{\on{Verdier}}}
\nc{\pscpt}{{\on{ps-cpt}}}
\nc{\ICohNSt}{\ICoh_{\N + \St}(\LSG)}
\nc{\NSt}{\ICohNSt}
\nc{\DmodBig}{\Dmod_{!+*}}
\nc{\WSph}{\on{WS}}
\nc{\backsl}{\backslash}
\nc{\AJ}{\on{AJ}}
\nc{\GR}{G(R)}
\nc{\B}{{\bbone_{\Sph_G}^\temp}}
\nc{\unittempG}{{\bbone_{\Sph_G}^\temp}}
\nc{\unittempM}{{\bbone_{\Sph_M}^\temp}}
\nc{\adj}{\on{adj}}
\nc{\Iso}{\on{Iso}}
\nc{\nonext}{\on{non-ext}}
\nc{\dom}{{\on{dom}}}
\nc{\Mat}{\mathit{Mat}}
\nc{\Hren}{H_{\BM}}
\renc{\BM}{\on{BM}}
\nc{\univ}{\on{univ}}
\nc{\sing}{\on{sing}}
\nc{\new}{\on{new}}
\nc{\Gcirc}{G^\circ}
\nc{\ppart}{(\!(t)\!)}
\nc{\equivariant}{\on{-equiv}}
\nc{\Cotrnk}{\on{Cotrnk}}
\nc{\Ctrnk}{\Cotrnk}
\nc{\Cotrunk}{\Cotrnk}
\nc{\DeltaGen}{\wt\Delta}
\nc{\GS}{G[\Sigma]}
\nc{\GSgamma}{\Ad_\gamma(\GS)}
\nc{\GSgamman}{\GS_\gamma^{\leq n}}
\nc{\GSn}{\GS^{\leq n}}
\nc{\lopen}{{\lambda{\mathit{-open}}}}
\nc{\lclosed}{{\lambda{\mathit{-closed}}}}
\nc{\BK}{{B(\KK)}}
\nc{\omegaren}{\omega^{\ren}}
\nc{\Ybg}{\Y_{\beta, \gamma}}
\nc{\Toep}[1]{\on{Toep}_{#1}}
\nc{\ToepDeg}[1]{\Toep{#1}^{\on{deg}}}
\nc{\jXS}{j^{\circ, \times \to \all}}
\nc{\jcirc}{j^{\circ \to \all}}
\nc{\jtimes}{j^{\times \to \all}}
\nc{\jXStoCirc}{j^{\circ, \times \to \circ}}
\nc{\jXStoTimes}{j^{\circ, \times \to \times}}
\nc{\GrXS}{\Gr_G^{\circ, \times}}
\nc{\iBullettoTimes}{i^{\bullet \to \times}}
\nc{\BunGnontriv}{\Bun_G^{\mathsf{non}\!\on-\!\mathsf{triv}}}
\nc{\Kos}{{\on{Kos}}}
\nc{\Nch}{\check{\N}}
\nc{\WS}{\W\S}
\nc{\NO}{N(\OO)}
\nc{\sR}{\mathsf{R}}
\renc{\Nilp}{\Nch}
\renc{\ss}{\on{ss}}
\nc{\Op}{\on{Op}}
\nc{\OpUnrGlob}{\LS_{\Gch}^{\Op\ggen}}
\nc{\sigmatriv}{\sigma_{\on{triv}}}
\begin{document}

\title{On the geometric Ramanujan conjecture}

\begin{abstract}

We prove two main results relevant for the geometric Langlands program.
The first result is an analogue of the Ramanujan conjecture: any cuspidal D-module on $\Bun_G$ is tempered. We actually prove a more general statement: any D-module that is $*$-extended from a quasi-compact open substack of $\Bun_G$ is tempered. Then the assertion about cuspidal objects is an immediate consequence of a theorem of Drinfeld-Gaitsgory.
Building up on this, we prove our second main result, the \emph{automorphic gluing} theorem for the group $SL_2$: it states that any D-module on $\Bun_{SL_2}$ is determined by its tempered part and its constant term. 
This theorem (vaguely speaking, an analogue of Langlands' classification for the group $SL_2(\bbR)$) corresponds under geometric Langlands to the spectral gluing theorem of Arinkin-Gaitsgory and the author.
In the final part of the paper, we argue that the above results pave the way for a potential proof of the geometric Langlands conjecture for $G=SL_2$.
  
\end{abstract}

\author{Dario Beraldo}

\maketitle

\sec{Introduction}

\ssec{Cuspidal and tempered D-modules}

In the (unramified and global) geometric version of the Langlands program, one fixes an algebraically closed ground field $\kk$ of characteristic zero and considers $\Bun_G := \Bun_G(X)$, the stack of $G$-bundles on a smooth projective $\kk$-curve $X$.  Here and everywhere else in this paper, $G$ denotes a connected reductive $\kk$-group.

\sssec{}

The \emph{automorphic side} of the geometric Langlands correspondence is the DG category of D-modules on $\Bun_G$, denoted in this paper by $\Dmod(\Bun_G)$. 
We let $\Dmod(\Bun_G)^\cusp$ and $\Dmod(\Bun_G)^\temp$ be the full subcategories consisting of \emph{cuspidal} and \emph{tempered} objects, respectively.

\sssec{}

The definition of cuspidality parallels exactly the classical case: an object $\F \in \Dmod(\Bun_G)$ is said to be cuspidal if its constant terms $\CT_P(\F)$ vanish for all parabolic subgroups $P \subsetneq G$.
Let us recall that
$$
\CT_P := (\fq_P)_{*} \circ (\fp_P)^{!}:
\Dmod(\Bun_G)
\longto
\Dmod(\Bun_P)
\longto
\Dmod(\Bun_M)
$$
is the functor of D-module pull-push along the natural diagram
$$
\Bun_G \xleftarrow{\;\;\fp_P\;\;}
\Bun_P \xto{\;\;\fq_P\;\;}
\Bun_M,
$$
where $M$ is the Levi quotient of $P$.

\sssec{}

The temperedness condition for objects of $\Dmod(\Bun_G)$ is reviewed below: see Section \ref{intro:temp} for a leisurely discussion and Sections \ref{ssec:derived-satake}-\ref{ssec:tempered-objects} for the proper treatment. Besides the original definition taken from \cite[Section 12]{AG1}, we will present three different-looking characterizations: a review of the characterization in terms of the big cell of the affine Grassmannian following \cite{omega}, a characterization in terms of Whittaker invariants, and finally a t-structure characterization.
In this paper, the most important characterization is the latter one; however, it seems to us that the three points of view are all useful in different parts of the theory.

\sssec{}

If we assume the geometric Langlands conjecture, the tempered condition is very natural. Namely, after \cite{AG1}, the Langlands conjecture calls for an equivalence
\begin{equation} \label{eqn:GL-conj}
\LL_G:
\ICoh_{\Nch}(\LSGch)
\xto{\;\; \simeq \; \;}
\Dmod(\Bun_G),
\end{equation}
where $\LSGch$ is the (derived) stack of $\Gch$-local systems on $X$ and $\ICoh_{\Nch}(\LSGch)$ is the DG category of ind-coherent sheaves with nilpotent singular support. The precise definition is not important for now: what is important is that $\ICoh_{\Nch}(\LSGch)$ contains $\QCoh(\LSGch)$ as a full subcategory, embedded as the subcategory of $\ICoh_{\Nch}(\LSGch)$ consisting of obejcts with zero singular support. Then $\Dmod(\Bun_G)^\temp$ is designed to correspond to $\QCoh(\LSGch)$ under \eqref{eqn:GL-conj}, yielding the conjecture
\begin{equation} \label{eqn:GL-conj-temp}
\LL_G^\temp:
\QCoh(\LSGch)
\xto{\simeq}
\Dmod(\Bun_G)^\temp.
\end{equation}

\sssec{}

In general and vague terms, the \emph{Ramanujan conjecture} states that cuspidal objects are tempered.
Our first main result confirms this expectation in the context of the geometric Langlands program:

\begin{mainthm} \label{mainthm:Ramanujan}
The following inclusion holds: $\Dmod(\Bun_G)^\cusp \subseteq \Dmod(\Bun_G)^\temp$.
\end{mainthm}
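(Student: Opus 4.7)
The plan follows the strategy outlined in the abstract: prove the stronger assertion that any $\F \in \Dmod(\Bun_G)$ of the form $j_*(\F_0)$, for $j : \curs{U} \hookrightarrow \Bun_G$ the inclusion of a quasi-compact open substack and $\F_0 \in \Dmod(\curs{U})$, already lies in $\Dmod(\Bun_G)^\temp$. Granting this, the theorem is immediate: by the Drinfeld-Gaitsgory theorem cited in the introduction, every cuspidal D-module on $\Bun_G$ is supported on a quasi-compact open substack, and hence is canonically the $*$-extension of its restriction there.

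The key tool I would use for the stronger statement is the t-structure characterization of temperedness promised in \secref{intro:temp}. Let $\Dmod(\Bun_G)^\antitemp$ denote the (left) orthogonal complement of $\Dmod(\Bun_G)^\temp$, so that temperedness of $\F$ amounts to the vanishing of $\Hom(\G, \F)$ for all anti-tempered $\G$. The expected form of the t-structure characterization is that non-zero anti-tempered objects are cohomologically unbounded in a strong sense -- concretely, that $\Dmod(\Bun_G)^\antitemp$ has trivial intersection with the bounded-below part $\Dmod(\Bun_G)^+$ of the natural t-structure, or an analogous statement phrased via a filtration whose convergence forces unboundedness.

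With this in hand, the argument is short. Since $j$ is an open immersion with quasi-compact target, it is of finite type, so $j_*$ has finite cohomological amplitude on $\Dmod(\curs{U})^+$. Consequently, $j_*(\F_0)$ lies in $\Dmod(\Bun_G)^+$ whenever $\F_0$ lies in $\Dmod(\curs{U})^+$; the general case follows by reducing to the bounded-below situation via the compatibility of $j_*$ with filtered colimits (and, if needed, the fact that $\Dmod(\Bun_G)^\antitemp$ is closed under the relevant limit/colimit operations). By the t-structure dichotomy above, no non-zero map from an anti-tempered object to $j_*(\F_0)$ can exist, which is exactly temperedness of $j_*(\F_0)$.

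The main obstacle is establishing the precise form of the t-structure characterization of temperedness and ensuring it interacts cleanly with $*$-pushforward along qc open embeddings. The introduction advertises three characterizations (via the big cell of the affine Grassmannian following \cite{omega}, via Whittaker invariants, and via the t-structure), and the derivation of the t-structure characterization will likely pass through the Whittaker description, where temperedness translates to a coherence/boundedness condition on a Whittaker module whose failure is witnessed by an infinite-amplitude element built from the anti-tempered Hecke action. Once this translation is in place, the remaining verifications are essentially formal.
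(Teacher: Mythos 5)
Your reduction of the Ramanujan statement to the assertion about $*$-extensions from quasi-compact opens matches the paper (this is exactly the deduction of Theorem~\ref{mainthm:Ramanujan} from Theorem~\ref{mainthm:star}, via the Drinfeld--Gaitsgory result that cuspidal objects are $*$-extended from $\U_\cusp$). However, your proposed proof of the stronger assertion rests on a t-structure dichotomy that is false, so there is a genuine gap.

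You hypothesize that the t-structure characterization of temperedness takes the form ``$\Dmod(\Bun_G)^\antitemp$ has trivial intersection with $\Dmod(\Bun_G)^+$,'' i.e.\ that nonzero anti-tempered D-modules on $\Bun_G$ must be cohomologically unbounded below. This cannot be right: the paper itself recalls (and \cite{omega} proves) that $\omega_{\Bun_G}$ is anti-tempered, and $\omega_{\Bun_G}$ is certainly bounded. More fundamentally, the paper observes that the natural t-structure on $\Dmod(\Bun_G)$ is left-complete, so $\Dmod(\Bun_G)^{\leq -\infty} \simeq 0$; there is no room for ``cohomological unboundedness'' to detect anti-temperedness on the automorphic side. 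Consequently, knowing that $j_*(\F_0)$ lies in $\Dmod(\Bun_G)^+$ does not by itself exclude maps from anti-tempered objects, and your short argument breaks at exactly this point.

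The actual t-structure characterization used in the paper is located elsewhere: an object $c$ of a $\Sph_G$-module category is tempered iff $\A \star c \simeq 0$ for every \emph{infinitely connective} $\A \in \Sph_G$ (Corollary~\ref{cor:key}), and it is $\Sph_G$ --- not $\Dmod(\Bun_G)$ --- that carries nontrivial infinitely connective objects (Corollary~\ref{cor:antitemp=t-nil}, ultimately a consequence of Theorem~\ref{thm:omega}). The paper then reduces Theorem~\ref{mainthm:star} to showing that $- \star \Delta_*(\omega_{\Bun_G}) \simeq h_{*,\ren} \circ r^!$ kills $\Sph_G^{\leq -\infty}$, which in turn reduces to the t-goodness of the map $r\colon \Hecke^\glob_{G,x} \to \Hecke^\loc_{G,x}$ (Theorem~\ref{mainthm:Hecke}) combined with the right t-exactness of $h_{*,\ren}$ and the vanishing $\Dmod(\Bun_G \times \Bun_G)^{\leq -\infty} \simeq 0$. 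Your proposal replaces this global-local comparison of Hecke stacks with a purely automorphic-side boundedness argument, and that replacement does not go through.
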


\ssec{Motivation for the validity of Ramanujan conjecture}

Let us explain how to deduce the above conjecture from the geometric Langlands conjecture and its compatibility with constant terms. 

\sssec{}

According to \cite[Section 13]{AG1}, the \emph{spectral side}\footnote{that is, the side of $\ICoh_{\Nch}(\LSGch)$} of \eqref{eqn:GL-conj} has its own constant term functors
$$
\CT_{\Pch}^{\spec}:
\ICoh_{\Nch}(\LSGch)
\longto
\ICoh_{\Nch}(\LSMch),
$$
defined by pull-push for ind-coherent sheaves along the natural diagram
$$
\LSGch \leftto \LSPch \to \LSMch.
$$

\sssec{}

It is expected that $\CT_P$ and $\CT_\Pch^{\spec}$ correspond to each other\footnote{up to twisting with an explicit line bundle on $\LSMch$, which is irrelevant for us and omitted in what follows} under geometric Langlands: i.e., the following diagram ought to be commutative for any $P \subseteq G$:
\begin{equation} 
\nonumber
\begin{tikzpicture}[scale=1.5]
\node (00) at (0,0) {$ \ICoh_{\Nilp}(\LSGch)$};
\node (10) at (3,0) {$ \ICoh_{\Nilp}(\LSMch)$.};
\node (01) at (0,1) {$ \Dmod(\Bun_G)$};
\node (11) at (3,1) {$\Dmod(\Bun_M)$};
\path[->,font=\scriptsize,>=angle 90]
(00.east) edge node[above] {$\CT_{\Pch}^{\spec}$}  (10.west); 
\path[->,font=\scriptsize,>=angle 90]
(01.east) edge node[above] {$\CT_P$} (11.west); 
\path[->,font=\scriptsize,>=angle 90]
(01.south) edge node[right] {$\stackrel ? \simeq$} (00.north);
\path[->,font=\scriptsize,>=angle 90]
(11.south) edge node[right] {$\stackrel ? \simeq$} (10.north);
\end{tikzpicture}
\end{equation}
This is the compatibilty of geometric Langlands with constant terms (equivalently, by adjunction, with Eisenstein series), see again \cite[Section 13]{AG1} or \cite[Chapter 6]{Outline}.

\sssec{}

In particular, we might define cuspidal objects in $ \ICoh_{\Nilp}(\LSGch)$ as those objects annihilated by all $\CT_\Pch^{\spec}$ for all $\Pch \subsetneq \Gch$, and then geometric Langlands would force
\begin{equation} \label{eqn:GL-cusp}
\LL_G^\cusp:
\ICoh_{\Nilp}(\LSGch)^\cusp
\xto{\;\;\simeq \;\;}
\Dmod(\Bun_G)^\cusp.
\end{equation}
However, $\ICoh_{\Nilp}(\LSGch)^\cusp$ was computed\footnote{Ultimately, the computation boils down to the Jacobson-Morozov theorem.} in \cite[Section 13.3]{AG1} to be equivalent to $\QCoh(\LS_{\Gch}^{\irred})$,
embedded into $\ICoh_{\Nilp}(\LSGch)$ via the composition
\begin{equation}\label{eqn:short-chain}
\begin{tikzpicture}[scale=1.5]
\node (a) at (0,1) {$\QCoh(\LS_{\Gch}^{\irred})$};
\node (b) at (3.5,1) {$\QCoh(\LS_{\Gch})
\subseteq
\ICoh_{\Nilp}(\LSGch).$};
\path[right hook ->,font=\scriptsize,>=angle 90]
(a.east) edge node[above] {$(j^{\irred})_{*}$} (b.west);
\end{tikzpicture}
\end{equation}
Here $j^{\irred}: \LS_{\Gch}^{\irred} \hto \LS_{\Gch}$ is the open embedding of the stack of \emph{irreducible} $\Gch$-local systems.

\sssec{}

Thus, the fully faithful embedding 
$$
(j^{\irred})_{*}: 
\QCoh(\LS_{\Gch}^{\irred})
\hto
\QCoh(\LS_{\Gch})
$$
is the Langlands-dual version of the Ramanujan conjecture.

\ssec{Star-extensions}

We do not tackle the statement of Theorem \ref{mainthm:Ramanujan} directly. Rather, we divide the theorem in two parts by considering yet another full subcategory of $\Dmod(\Bun_G)$ that will turn out to sit between $\Dmod(\Bun_G)^\cusp$ and $\Dmod(\Bun_G)^\temp$. 

\sssec{}

The full subcategory in question is the DG category $\Dmod(\Bun_G)^\stargen$, defined as follows. 
We begin by recalling that $\Bun_G$ is never quasi-compact (unless $G$ is trivial): indeed, $\Bun_G$ is an increasing union of countably-many quasi-compact open substacks obtained by bounding the level of instability. 
Denote by $\Dmod(\Bun_G)^\stargen$ the full subcategory of $\Dmod(\Bun_G)$ generated under colimits by $*$-extensions from quasi-compact open substacks. 

\sssec{}

In \cite[Appendix B]{CT}, it is proven that any cuspidal D-module is a $*$-extension from an explicit open substack $\U_\cusp \subset \Bun_G$. This substack is not quasi-compact in general, but its intersection with each connected component of $\Bun_G$ is. This implies that any cuspidal D-module belongs to $\Dmod(\Bun_G)^\stargen$.
Hence, the Ramanujan conjecture is an immediate corollary of the following result:

\begin{mainthm}\label{mainthm:star}
The following inclusion holds: $\Dmod(\Bun_G)^\stargen 
\subseteq 
\Dmod(\Bun_G)^\temp$.
\end{mainthm}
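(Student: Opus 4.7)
The plan is to reduce Theorem~B to a cohomological amplitude estimate: a $*$-extension from a quasi-compact open substack has bounded amplitude in the t-structure governing temperedness, whereas anti-tempered objects are infinitely coconnective in that same t-structure, so the desired inclusion follows by orthogonality.

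First, I reduce to a single object. Since $\Dmod(\Bun_G)^\temp$ is closed under colimits and $\Dmod(\Bun_G)^\stargen$ is by definition generated under colimits by objects $j_*(\F)$, with $j \colon \U \hto \Bun_G$ a quasi-compact open embedding and $\F \in \Dmod(\U)$, it suffices to show that each such $j_*(\F)$ is tempered. Since $j_*$ preserves colimits for open embeddings, I may further reduce to the case where $\F$ is compact, e.g.\ a coherent D-module on $\U$.

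Next, I invoke the t-structure characterization of temperedness singled out in Section~1.1.2 as the most important for the present purposes. Morally, this characterization asserts that $\Dmod(\Bun_G)^\antitemp$ is cut out by an infinite-coconnectivity condition relative to a t-structure built from the spherical Hecke action: the Arthur $SL_2$ shifts cohomological degree positively on the anti-tempered part, so every anti-tempered $\A$ lies in $\bigcap_n \Dmod(\Bun_G)^{\geq n}$ with respect to this t-structure. Consequently, to show $j_*(\F) \in \Dmod(\Bun_G)^\temp = {}^\perp \Dmod(\Bun_G)^\antitemp$, it suffices to bound the amplitude of $j_*(\F)$ from below in this t-structure; then $\Hom(j_*(\F), \A) = 0$ for every infinitely coconnective $\A$, and temperedness follows. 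The quasi-compactness of $\U$ provides exactly such a bound in the standard t-structure on $\Dmod(\Bun_G)$, because $j$ is an open embedding and $\F$ is coherent.

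The main obstacle is transferring this boundedness from the standard t-structure on $\Dmod(\Bun_G)$ to the Hecke-theoretic t-structure that detects temperedness; the two are not a priori comparable. I expect to handle this by factoring through the spherical Hecke projector onto the tempered part and showing that, for $j_*(\F)$ as above, the anti-tempered projection lies in a bounded range, so that the cofinal tower of positive Hecke shifts coming from the Arthur $SL_2$ must eventually vanish. The compatibility between Hecke operators and $*$-pushforward from quasi-compact opens is the crux: Hecke operators act locally on $X$, whereas quasi-compactness of $\U$ controls the global geometry on $\Bun_G$, and reconciling these two features—plausibly by a factorization-style argument using the action of $\Sph_G$ at a moving point of $X$—is where the technical heart of the proof will lie.
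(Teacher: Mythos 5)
Your proposal correctly senses that the heart of Theorem~\ref{mainthm:star} is a t-structure estimate (boundedness versus infinite connectivity) and correctly invokes the orthogonality $\C^\temp = {}^\perp\C^\antitemp$. However, the estimate you aim for lives in the wrong category, and the step you flag as the main obstacle is never resolved.

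The t-structure characterization of temperedness that the paper proves (Corollary~\ref{cor:key}) is \emph{not} the statement that $\Dmod(\Bun_G)^\antitemp$ is the subcategory of infinitely (co)connective objects for some t-structure on $\Dmod(\Bun_G)$. There is no such t-structure: in fact the natural t-structure on $\Dmod(\Bun_G)$ has $\Dmod(\Bun_G)^{\leq -\infty} \simeq 0$, which is exactly why the problem is nontrivial. The characterization is instead that an object $c$ of a $\Sph_G$-module category is tempered iff $\A \star c \simeq 0$ for every infinitely connective $\A \in \Sph_G$, where the infinite connectivity takes place in $\Sph_G$ and is identified with $\Sph_G^\antitemp$ via the t-exactness of derived Satake (Corollary~\ref{cor:antitemp=t-nil}). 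So bounding the amplitude of $j_*(\F)$ in a t-structure on $\Dmod(\Bun_G)$, and appealing to a vague ``Arthur $SL_2$ shift,'' does not engage with the actual mechanism.

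The route the paper takes, which you would need to find, is different in two respects. First, rather than handle each $j_*(\F)$ separately, one reduces to temperedness of a single object $\Delta_{*,\dR}(\omega_{\Bun_G}) \in \Dmod(\Bun_G \times \Bun_G)$: this is the kernel of $\Psid_*$, whose essential image is identified with $\Dmod(\Bun_G)^\stargen$ by \cite[Lemma 2.1.9]{DL}. Second, one computes that $-\star\Delta_{*,\dR}(\omega_{\Bun_G}) \simeq h_{*,\ren}\circ r^!$ through the correspondence $\Hecke^\loc_{G,x} \xleftarrow{r} \Hecke^\glob_{G,x} \xto{h} \Bun_G\times\Bun_G$, so temperedness follows from: (a) $h_{*,\ren}$ is right t-exact (ind-affineness), (b) $\Dmod(\Bun_G\times\Bun_G)^{\leq-\infty}\simeq 0$, and (c) $r$ is \emph{t-good}, i.e.\ $r^!$ preserves infinitely connective objects (Theorem~\ref{mainthm:Hecke}); the latter is proved by reducing smooth-locally via Drinfeld--Simpson and Beauville--Laszlo to concrete maps from $G[\Sigma]$ to affine Grassmannians. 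Your ``factorization-style argument using a moving point'' gestures toward this local-to-global interplay, but without the global Hecke stack, the map $r$, and the notion of t-goodness, the proposal stops precisely where the proof begins.
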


\sssec{}

The statement of Theorem \ref{mainthm:star} was proposed by us (see \cite{DL}) in the form of a conjecture. Our motivation for this conjecture was the behaviour of Deligne-Lusztig duality on the spectral side of the geometric Langlands correspondence. 
Let us briefly recall some key points of that situation. The chain \eqref{eqn:short-chain} can be refined to a longer chain
of fully faithful embeddings
\begin{equation} \label{chain-spec}
\QCoh(\LS_{\Gch}^\irred)
\subseteq
\QCoh(\LSGch)^{\ss}
\subseteq
\QCoh(\LSGch)
\subseteq
\ICoh_{\Nch}(\LSGch),
\end{equation}
where $\QCoh(\LSGch)^{\ss}$ is \virg{morally} the full-subcategory of $\QCoh(\LSGch)$ spanned by objects set-theoretically supported on the locus of semi-simple $\Gch$-local systems.\footnote{The word \virg{morally} is included because the locus of semi-simple $\Gch$-local systems is not a well-defined locally closed substack of $\LSGch$. See the discussion in \cite{DL} for details.}
In \cite{DL}, we considered the spectral Deligne-Lusztig functor $\DL_\Gch^{\spec}$ and showed that: 
\begin{itemize}
\item
$\DL_\Gch^{\spec}$ acts as the identity on $\QCoh(\LS_{\Gch}^\irred)$;
\item
its essential image equals $\QCoh(\LSGch)^{\ss}$, hence in particular it is contained in $\QCoh(\LSGch)$.
\end{itemize}
We then argued that $\QCoh(\LSGch)^{\ss}$ ought to correspond to $\Dmod(\Bun_G)^\stargen$ under the geometric Langlands conjecture. Combining this fact with the other three more standard version of the conjecture, we see that \eqref{chain-spec} should correspond to the chain
$$
\Dmod(\Bun_G)^\cusp
\subseteq
\Dmod(\Bun_G)^\stargen
\subseteq
\Dmod(\Bun_G)^\temp
\subseteq
\Dmod(\Bun_G).
$$

\begin{example} \label{ex:abelian}
For $G =T$ abelian and $X$ of arbitrary genus, the inclusions in the above two chains are all equivalences.
Hence, from now on we will focus on non-abelian groups.
\end{example}

\begin{example}
Now consider $X = \PP^1$ and let $G$ be arbitrary (connected reductive, as always). 
One can show directly, following \cite[Section 3.1]{omega} for instance, that 
$$
\Dmod(\Bun_G(\bbP^1))^\stargen 
\simeq
\Dmod(\Bun_G(\PP^1))^\temp.
$$ 
This establishes Theorem \ref{mainthm:star} in the genus $0$ case.\footnote{More details will be provided in Section \ref{ssec:case of P1}.}
On the other hand, Theorem \ref{mainthm:Ramanujan} is trivially true since $\Dmod(\Bun_G(\PP^1))^\cusp \simeq 0$. 
\end{example}

\begin{rem}

Now let us return to the general situation of $G$ and $X$ both arbitrary.
As a consequence of Theorem \ref{mainthm:star}, the notion of temperedness depends only on the geometry of $\Bun_G$ \emph{at infinity}. More precisely, an object $\F \in \Dmod(\Bun_G)$ is tempered if and only if so is 
$$
(i_Z)_!(i_Z)^!(\F)
\simeq
\ker 
\Bigt{
\F \to (j_U)_* (j_U)^* (\F)
}
$$
for any closed substack $Z \subseteq \Bun_G$ whose complement $U$ is quasi-compact in every connected component.

\end{rem}

\ssec{The diagonal}

In the main body of the paper, we explain that the proof of Theorem \ref{mainthm:star} reduces to checking the temperedness of a single D-module on the product $\Bun_G \times \Bun_G \simeq \Bun_{G \times G}$. 
Namely, we will show that Theorem \ref{mainthm:star} is equivalent to:

\begin{mainthmbis}{mainthm:star}\label{mainthm:diagonal}
Let $\Delta: \Bun_G \to \Bun_G \times \Bun_G$ be the diagonal map, $\Delta_*:\Dmod(\Bun_G) \to \Dmod(\Bun_G \times \Bun_G)$ the de Rham pushforward, and $\omega_{\Bun_G} \in \Dmod(\Bun_G)$ the dualizing sheaf. The object $\Delta_{*}(\omega_{\Bun_G})$ is tempered in $\Dmod(\Bun_G \times \Bun_G) $.
\end{mainthmbis}

\begin{rem}

This stands in sharp contrast with the fact, proven in \cite[Theorem A]{omega}, that $\omega_{\Bun_G}$ is \emph{anti-tempered} (that is, right orthogonal to all tempered objects).
Of course, there is no contradiction since $\Delta_{*}$ does not respect the relevant Hecke actions and thus it is not supposed to preserve (anti)tempered objects (see below).
\end{rem}

\ssec{Tempered objects} \label{intro:temp}

Now we come to the definition of $\Dmod(\Bun_G)^\temp$, see \cite[Section 12]{AG1} for the original source and Sections \ref{ssec:derived-satake}-\ref{ssec:tempered-objects} for details. 
We also refer to our treatment in \cite{omega}.

\sssec{}

Even though $\Dmod(\Bun_G)^\temp$ is defined intrinsically (i.e., independently of geometric Langlands), the reason for its introduction comes from the geometric Langlands conjecture: as mentioned earlier, the full subcategory $\Dmod(\Bun_G)^\temp \subseteq \Dmod(\Bun_G)$ is designed to correspond to the full subcategory $\QCoh(\LSGch) \subseteq \ICoh_{\Nch}(\LSGch)$ under geometric Langlands.

\sssec{}

In this section we list several equivalent characterizations of tempered objects. They look very different from one another and seem to be all useful in different applications.
The general context is that of a DG category $\C$ equipped with an action\footnote{We use the symbol $\star$ to indicate the action of $\Sph_G$ on $\C$.} of the monoidal DG category $$
\Sph_G := \Dmod(\GO \backsl \GK /\GO).
$$
The latter is the \emph{spherical DG category} (see \cite[Section 12]{AG1}), equipped with the monoidal structure given by convolution. 
Whenever $\C$ is endowed with an action of $\Sph_G$, we will be able to define its full subcategory $\C^\temp$ of \emph{tempered objects}.

\sssec{}

We will apply the general definition in the case of the Hecke action of $\Sph_G$ on $\Dmod(\Bun_G)$ at a point $x \in X$.
By \cite{indep}, this definition is independent of the choice of the point $x \in X$ (see also \cite{shvcatHH} for a related discussion). In any case, the particular point $x \in X$ is not important: whenever we can prove that an object is $x$-tempered, the same proof will work verbatim for any other point of the curve.

\begin{rem}
Consider the general setup of $\C$ acted on by $\Sph_G$. As claimed, the definitions below will produce a full subcategory $\C^\temp \subseteq \C$  formed by \emph{tempered objects}. We also define $\C^{\antitemp} \subseteq \C$ to be the right orthogonal to $\C^{\temp}$. Objects of $\C^{\antitemp}$ are called \emph{anti-tempered}.
Of course, $\C^{temp}$ and $\C^{\antitemp}$ determine each other and in certain situations it is more natural to define $\C^{\antitemp}$ first.
\end{rem}

\sssec{}

The mechanics of the original definition, due to Arinkin-Gaitsgory, consists of the following steps, see \citep{AG1}, as well as \cite{Outline} and \citep{omega} for details.

One first defines temperedness in the universal case, that is, for the left action of $\Sph_G$ on itself. In this case, the derived Satake theorem (\cite{BF} and Section \ref{ssec:derived-satake} below) states that $\Sph_G$ is equivalent to a certain DG category of ind-coherent sheaves; one takes $\Sph_G^\temp$ to be the corresponding full subcategory of quasi-coherent sheaves. 

One then sets $\C^\temp \subseteq \C$ to be the full-subcategory
$$
\C^\temp
:=
\{
\S \star c \; \big| \; \mbox{$\S \in \Sph_G^\temp$ and $c \in \C$}
\}.
$$

\sssec{} \label{sssec:misterB}

Thus, to provide more explicit characterizations of $\C^\temp$, we need to gain a good understanding of $\Sph_G^{\temp}$. We are aware of three different ways to accomplish that: by using the negative part of the loop group, by using Whittaker objects, by using the t-structure. (The latter method is one of the novelties of the present paper.) 
To state these characterizations in order, we need to recall two natural objects of $\Sph_G$ and make an observation.

\begin{enumerate}

\item
The first object is $\B := (f^!)^R(\omega_{ G \backsl G(R) /G})$, where $\GR$ is the negative part of the loop group,
$$
f: G \backsl G(R) /G \to \GO \backsl \GK / \GO
$$
the natural map, and $(f^!)^R$ is the right adjoint to the natural pullback $f^!$. This is called \emph{tempered unit of $\Sph_G$}, see \cite{omega} for the reason.

\item

The second object, called the \emph{basic Whittaker-spherical object}, is given by $\WS_0 := \Av_*^\GO (\chi_{\Gr_N})$. Definitions and appropriate references will appear in the main body of the paper (alternatively, the reader might consult \cite{thesis}).

\item

Finally, anticipating the contents of the next section, we claim that $\Sph_G$ admits a natural t-structure containing several \emph{non-trivial infinitely connective} objects.

\end{enumerate}
Granting the items in the above list, we can provide three explicit definitions of the notions of tempered/antitempered objects.

\begin{thm}
Let $\C$ be a DG category equipped with an action of $\Sph_G$. Denoting the action by $\star$, we have:
\begin{enumerate}
\item
$c \in \C$ is tempered iff there exists an isomorphism $ \B \star c \simeq c$;
\item
$c \in \C$ is anti-tempered iff $\B \star c \simeq 0$;
\item
$c \in \C$ is anti-tempered iff $\WS_0 \star c \simeq 0$;
\item
$c \in \C$ is tempered iff $\A \star c \simeq 0$ for any infinitely connective object $\A \in \Sph_G$.
\end{enumerate}
\end{thm}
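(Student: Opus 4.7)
The plan is to treat statements (1) and (2) as already established in \cite{omega} and derive (3) and (4) from them, relying on structural facts about the natural t-structure on $\Sph_G$ provided by derived Satake. Since (4) is what will be used elsewhere in the paper, I would prove it first and handle (3) as a companion.

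For (4), the strategy rests on viewing $\B$ as the tempered truncation of the monoidal unit $\bbone_{\Sph_G}$. The goal is to establish two structural properties simultaneously: (a) the canonical counit map $\B \to \bbone_{\Sph_G}$ has cofiber $\A_0$ that is infinitely connective in the natural t-structure on $\Sph_G$; and (b) for every infinitely connective $\A' \in \Sph_G$, one has $\A' \star \B \simeq 0$. Granted (a) and (b), the forward direction of (4) is immediate from (1): if $\B \star c \simeq c$ and $\A$ is infinitely connective, then $\A \star c \simeq \A \star \B \star c \simeq 0$. Conversely, if $\A \star c \simeq 0$ for every infinitely connective $\A \in \Sph_G$, apply the hypothesis to $\A = \A_0$ in the cofiber sequence $\B \star c \to c \to \A_0 \star c$ to conclude $\B \star c \simeq c$, and invoke (1) to deduce temperedness.

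For (3), the plan is to combine (2) with two features of $\WS_0$: \emph{bi-temperedness} in $\Sph_G$ (i.e., $\B \star \WS_0 \simeq \WS_0 \simeq \WS_0 \star \B$), together with a Casselman--Shalika-flavored generation statement producing $\D \in \Sph_G$ with $\D \star \WS_0 \simeq \B$ (or, more robustly, showing that $\WS_0$ generates $\Sph_G^\temp$ as a left $\Sph_G$-module). Bi-temperedness gives one direction: if $\B \star c \simeq 0$, then $\WS_0 \star c \simeq \WS_0 \star \B \star c \simeq 0$. Generation gives the converse: if $\WS_0 \star c \simeq 0$, then $\B \star c \simeq \D \star \WS_0 \star c \simeq 0$, whence anti-temperedness by (2).

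The main obstacle will be (a) and (b). Both rely on a precise use of derived Satake: one identifies $\Sph_G$, together with its t-structure, with an $\IndCoh$-category on a suitable derived stack, and $\Sph_G^\temp$ with the corresponding $\QCoh$-category. Under this dictionary, $\B$ is the image of the structure sheaf along the canonical embedding $\QCoh \hookrightarrow \IndCoh$, the cofiber $\A_0$ lies in the kernel of the opposite projection $\IndCoh \to \QCoh$ (which is precisely the infinitely connective ideal), and (b) follows because convolution with a $\QCoh$-object factors through $\QCoh$ and kills this ideal. Verifying these compatibilities is delicate because the t-structure on $\Sph_G$ interacts subtly with both the monoidal and module structures, particularly in view of the infinite-dimensional orbits in $\GO \backsl \GK / \GO$.
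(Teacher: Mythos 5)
Your proposal follows essentially the same route as the paper. For part (4), your pair of structural facts (a) and (b) are packaged in the paper as a single statement, namely the identification $\Sph_G^\antitemp \simeq \Sph_G^{\leq -\infty}$ (Corollary~\ref{cor:antitemp=t-nil}), which is exactly what you invoke from derived Satake (t-exactness of $\Sat_G$ plus Lemma~\ref{lem:ker(Psi)}); the cofiber-sequence chase you run afterwards is the object-level shadow of the paper's tensoring of the exact sequence \eqref{eqn:exact-seq Sat-aut} with $\C$. For part (3), the paper proves precisely the weaker (cocompletion) form of generation that you mention parenthetically --- the essential image of $\F \mapsto \F \star \WS_0$ generates $\ltemp\Sph_G$ under colimits --- not the stronger single-object form $\D \star \WS_0 \simeq \B$, so you should rely on that version; the bi-temperedness of $\WS_0$ that you use is left implicit in the paper (it follows from $\Sph_G^\temp$ being a two-sided ideal under derived Satake), and it is good that you flagged it.
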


\sssec{}

The first two statements were proved in \cite{omega} and will only be used marginally in this paper. The third and fourth statements are proven in Section \ref{sec:temp objects}.
The third statement is well-known to experts and it is not used in this paper; we decided to include it nevertheless because it belongs to the same circle of ideas. On the other hand, the fourth statement is the crucial input for our proofs.

\ssec{Infinitely connective objects}

Let $\C$ be a DG category equipped with a t-structure. 
Essential to this paper is the notion of \emph{infinitely connective object}. Let us review this notion, provide some examples and explain the role it plays here.

\sssec{}

First, let us recall the main definition, following our discussion in \cite[Sections 1.5 and 6.1]{omega}. When dealing with t-structures, we use the \emph{cohomological notation}. Given a DG category $\C$ equipped with a t-structure, let 
$$
\C^{\leq - \infty} := \bigcap_{n \gg 0 } \C^{\leq -n}.
$$
This is the full subcategory of $\C$ consisting of \emph{infinitely connective objects}.

\sssec{}

At first glance, it might seem that $\C^{\leq - \infty}$ is always zero. For instance, this is the case for $\C = \Vect$ and much more generally for $\C = \QCoh(\Y)$ where $\Y$ is a derived algebraic stack.
Even more generally, $\C^{\leq - \infty} \simeq 0$ whenever the t-structure on $\C$ is left-complete.

However, nonzero infinitely connective objects play an important role in the algebraic geometry of (even mildly) singular varieties: for instance, they are responsible for the difference between quasi-coherent and ind-coherent sheaves on a singular quasi-smooth scheme.

\sssec{}

Namely, in the particular case of $\LSGch$ mentioned above (and much more generally), the DG category $\ICoh_{\Nch}(\LSGch)$ possesses a natural t-structure and $\ICoh_{\Nch}(\LSGch)^{\leq - \infty}$ is equivalent to the kernel of the standard projection functor
$$
\Psi_{0 \hto \Nch}: 
\ICoh_{\Nch}(\LSGch)
\tto
\QCoh(\LSGch).
$$

\begin{rem}
The above is a precise manifestation of the fact that the difference between $\ICoh$ and $\QCoh$ lies cohomologically at $-\infty$, see \cite{ICoh}.
\end{rem}

\sssec{}

On the other side of the geometric Langlands conjecture, we will see that the DG category $\Dmod(\Bun_G)$ possesses a natural t-structure, too. However, in this case we find that $ \Dmod(\Bun_G)^{\leq - \infty} \simeq 0$. Indeed, as proven in \cite{finiteness}, the t-structure on the DG category of D-modules on an algebraic stack (quasi-compact or not) is left-complete.

\sssec{}

The above observations force the conjectural Langlands functor $\bbL_G: \ICoh_{\Nch}(\LSGch) \to \Dmod(\Bun_G)$ to have infinite cohomological amplitude with respect to the natural t-structures on both sides. Put another way, infinitely connective objects on the two sides of the geometric Langlands correspondence do not match. This is where anti-tempered objects come handy. Indeed, by design, \emph{infinitely connective objects on the spectral side should correspond to anti-tempered objects on the automorphic side}.

From this discussion, one might draw the conclusion that infinitely connective objects only appear and play a role on the spectral side of the geometric Langlands conjecture. This is actually not true: as we explain next, infinitely connective objects appear and are crucial in the proof of Theorem \ref{mainthm:diagonal} as well.

\sssec{}

Indeed, while the t-structure on D-modules on stacks is left-complete, this is usually not the case for D-modules on ind-schemes. The simplest example is arguably the ind-scheme $\AA^\infty := \colim_{d} \AA^d$: it is a nice exercise to check that the dualizing D-module $\omega_{\AA^\infty}$ is infinitely connective.

Now, $\Bun_G$ is closely related to two ind-schemes: the affine Grassmannian $\Gr_{G,x}$ on the one hand, and the ind-scheme $G[\Sigma]$ of maps from a smooth affine curve $\Sigma$ to $G$ on the other hand. 
For instance, when $G$ is semisimple, $\Bun_G$ can be realized as a quotient $\Gr_{G.x}/G[X-x]$.
In view of \cite[Theorem D and Corollary 1.5.3]{omega}, we have:

\begin{thm} \label{thm:omega}
The natural t-structures on $\Dmod(\Gr_{G,x})$ and $\Dmod(G[\Sigma])$ are not left-complete (unless $G$ is abelian, which is excluded from our considerations after Example \ref{ex:abelian}). In fact, both $\omega_{\Gr_{G,x}}$ and $\omega_{G[\Sigma]}$ are infinitely connective.%
\footnote{In \citep{omega}, we provided several other examples of indschemes $Y$ whose dualizing sheaf is infinitely connective. }
\end{thm}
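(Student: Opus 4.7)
The plan is to realize both $\Gr_{G,x}$ and $G[\Sigma]$ as filtered colimits of closed finite-dimensional subschemes whose dimensions grow without bound, and to argue that the dualizing D-module on each approximation sits in a cohomological degree that tends to $-\infty$; this forces $\omega$ on the colimit to have vanishing cohomology in every fixed degree while remaining non-zero. The non-abelianness of $G$ enters only to guarantee the unboundedness of the approximating dimensions: for a torus the Schubert strata have dimension zero and the argument fails, which is consistent with Example \ref{ex:abelian}.

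For $\Gr_{G,x}$, I would write the indscheme as the filtered colimit $\colim_\lambda \overline{\Gr^\lambda}$ of its Schubert subvarieties along closed embeddings $i_\lambda \colon \overline{\Gr^\lambda} \hookrightarrow \Gr_{G,x}$, noting that $\dim \overline{\Gr^\lambda} = 2 \langle \lambda, \rho \rangle$ tends to $\infty$ with $\lambda$. Affine Grassmannian Schubert varieties are known to be Cohen--Macaulay, so $\omega_{\overline{\Gr^\lambda}}$ is concentrated in the single cohomological degree $-\dim \overline{\Gr^\lambda}$ under the natural t-structure on $\Dmod(\overline{\Gr^\lambda})$. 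Combining the defining relation $(i_\lambda)^! \omega_{\Gr_{G,x}} \simeq \omega_{\overline{\Gr^\lambda}}$ with the adjunction $(i_\lambda)_* \dashv (i_\lambda)^!$ yields a presentation
$$
\omega_{\Gr_{G,x}} \;\simeq\; \colim_\lambda \, (i_\lambda)_* \omega_{\overline{\Gr^\lambda}}.
$$
Since closed pushforwards are t-exact, each term is concentrated in the single, ever more negative degree $-\dim \overline{\Gr^\lambda}$. Filtered colimits in $\Dmod$ commute with the formation of $H^n$, and for any fixed $n$ and any $\lambda$ one can pick $\lambda' \geq \lambda$ with $\dim \overline{\Gr^{\lambda'}} > -n$; then the transition map $(i_\lambda)_* \omega_{\overline{\Gr^\lambda}} \to (i_{\lambda'})_* \omega_{\overline{\Gr^{\lambda'}}}$ has source and target in distinct cohomological degrees, so it vanishes on every $H^n$. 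Thus $H^n(\omega_{\Gr_{G,x}}) = 0$ for every $n$, while $\omega_{\Gr_{G,x}} \neq 0$ because its $!$-restriction to any Schubert subvariety is non-zero. Both assertions of the theorem follow.

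For $G[\Sigma]$ I would repeat the outline with a different exhaustion: embedding $G \hookrightarrow \GL_n$ and truncating maps $\Sigma \to \GL_n \subset M_n$ by polynomial degree yields finite-type closed subschemes $Y_d \subset G[\Sigma]$ of unbounded dimension. The principal obstacle is that these $Y_d$ need not be Cohen--Macaulay, so $\omega_{Y_d}$ can spread over several cohomological degrees and the clean one-degree-per-term argument breaks down. I would circumvent this either by resolving each $Y_d$ by a smooth variety and propagating infinite connectivity across a proper birational pushforward, or by arranging the exhaustion to consist of smooth subschemes from the outset --- using that $G$ and $\Sigma$ are both smooth, this is possible inside a smooth ambient affine space. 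Once the approximating pieces are smooth, the cohomological-degree computation is identical to the affine Grassmannian case; this reduction to the smooth setting is what I expect to absorb most of the technical work.
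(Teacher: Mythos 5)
Your argument for $\Gr_{G,x}$ is correct. Two points of care. First, Cohen--Macaulayness of $\overline{\Gr^\lambda}$ puts the \emph{ind-coherent} dualizing complex in the single degree $-\dim\overline{\Gr^\lambda}$; to transfer this to the D-module t-structure one needs that the forgetful functor $\oblv\colon\Dmod(\overline{\Gr^\lambda})\to\ICoh(\overline{\Gr^\lambda})$ is t-exact and carries $\omega^{\Dmod}$ to $\omega^{\ICoh}$ --- worth saying explicitly, since the paper only declares $\oblv$ left t-exact when defining the t-structure. Second, the transition maps $(i_\lambda)_*\omega_{\overline{\Gr^\lambda}}\to(i_{\lambda'})_*\omega_{\overline{\Gr^{\lambda'}}}$ live in \emph{positive} Ext groups (the source sits in a less negative degree than the target) and in general do not vanish --- and cannot all vanish, else $\omega_{\Gr_{G,x}}$ would be zero. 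What your argument actually uses, correctly, is only that the induced map on each fixed $H^n$ eventually vanishes.

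The $G[\Sigma]$ half has a genuine gap, and neither proposed workaround closes it. The degree truncations $G[\Sigma]^{\leq d}\subset\GL_n[\Sigma]^{\leq d}$ are not smooth: already for $SL_2[\AA^1]^{\leq n}$, the differential of $ad-bc-1$ at the identity has image $\kk[t]^{\leq n}\subsetneq\kk[t]^{\leq 2n}$, whereas at the point $(a,b,c,d)=(1,t^n,0,1)$ it is onto $\kk[t]^{\leq 2n}$; the tangent-space dimension jumps, so the truncation is singular at the identity. Smoothness of $G$ and $\Sigma$ yields formal smoothness of the ind-scheme $G[\Sigma]$, but that does not descend to finite-type truncations (compare $\Gr_G$ itself: formally smooth, with a singular Schubert exhaustion). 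Nor is Cohen--Macaulayness of the truncations clear. The resolution fallback also fails as stated: for a resolution $\pi\colon\widetilde{Y}_d\to Y_d$, the comparison map $\pi_!\,\omega_{\widetilde{Y}_d}\to\omega_{Y_d}$ is not an isomorphism unless $Y_d$ has rational singularities, which you have not established, so the degree of $\omega_{\widetilde{Y}_d}$ controls nothing about $\omega_{Y_d}$.

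The efficient route for $G[\Sigma]$ is to bypass the truncations and reduce to the Grassmannian case you have already handled. Writing $\Sigma = X\setminus\{x_0,\dots,x_m\}$ with $X$ the smooth completion, the map $r\colon G[\Sigma]\to\prod_i\Gr_{G,x_i}$, $\phi\mapsto([\phi])_i$, is the base-change of the point $\pt\to\Bun_G$ classifying the trivial bundle along the uniformization map $\prod_i\Gr_{G,x_i}\to\Bun_G$, and this paper shows exactly such maps to be t-excellent (Sections~\ref{ssec:ss case} and~\ref{ssec:red case}): $r^!$ is right t-exact up to a finite shift. Since $r^!\,\omega_{\prod_i\Gr_{G,x_i}}\simeq\omega_{G[\Sigma]}$ and $\omega_{\prod_i\Gr_{G,x_i}}\simeq\boxtimes_i\,\omega_{\Gr_{G,x_i}}$ is infinitely connective by your Grassmannian argument, the $G[\Sigma]$ claim follows at once.
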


For the proof of Theorem \ref{mainthm:diagonal}, we will need to consider two prestacks built out of $\Gr_{G,x}$ and $G[X-x]$.

\sssec{}

Related to $\Gr_{G,x}$ is the \emph{local Hecke stack}\footnote{This is an abuse of terminology in that $\Hecke_{G,x}^\loc$ is not an algebraic stack and not even an ind-stack. We adhere to this terminology by reasons of tradition.}
$$
\Hecke_{G,x}^\loc :=
\GO \backsl \GK / \GO.
$$
It parametrizes pairs of $G$-bundles on the infinitesimal disk at $x$ together with an isomorphism of their restrictions to the punctured disk. Its DG category of D-modules is $\Sph_G$ by construction. In the main body of the paper, we equip $\Sph_G$ with a natural t-structure and observe that Theorem \ref{thm:omega} implies that $\Sph_G^{\leq - \infty}$ is nonzero.

\sssec{}

Related to $G[X-x]$ is the \emph{global Hecke ind-stack}
$$
\Hecke^\glob_{G,x}
:=
\Bun_G \utimes_{\Bun_G(X-x)} \Bun_G.
$$
This ind-stack parametrizes pairs of $G$-bundles on $X$ together with an isomorphism of their restrictions to the punctured curve $X-x$.
The part of Theorem \ref{thm:omega} concerning $G[\Sigma]$ implies that $\Dmod(\Hecke^\glob_{G,x})$ contains nontrivial infinitely connective objects.

\sssec{} \label{sssec:r maps of Hecke}

We now ask how the infinitely connective objects of $\Sph_{G,x}$ and $\Dmod(\Hecke^\glob_{G,x})$ are related.
First of all, there is an evident restriction map
$$
r:
\Hecke^\glob_{G,x}
\longto
\Hecke_{G,x}^\loc
$$
whose pullback yields a functor
$$
r^!:
\Sph_{G,x}
\longto
\Dmod(
\Hecke^\glob_{G,x}).
$$
In the main body of the paper, we will prove the following statement and show it is the key t-structure estimate behind the proof of Theorem \ref{mainthm:diagonal}.

\begin{mainthm} \label{mainthm:Hecke}
The functor $r^!:
\Sph_{G}
\longto
\Dmod(
\Hecke^\glob_{G,x})$ preserves infinitely connective objects.
\end{mainthm}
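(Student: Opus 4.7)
The plan is to exhibit a Cartesian square that reduces the cohomological estimate to a boundedness statement for the dualizing sheaf of an auxiliary stack.

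First, writing $\Bun_G^{(x)}$ for the stack of $G$-bundles on $X$ equipped with a trivialization on the formal disk $D_x$, I would use Beauville-Laszlo to observe that a triple $(\P_1, \P_2, \alpha) \in \Hecke^\glob_{G,x}$, together with such a trivialization of $\P_1|_{D_x}$, canonically determines via $\alpha|_{D_x^\times}$ a point of $\Gr_{G,x}$ encoding the relative position of $\P_2$. This identifies $\Hecke^\glob_{G,x}$ with the twisted product $\Bun_G^{(x)} \overset{\GO}{\times} \Gr_{G,x}$ and expresses $r$ as the $\GO$-descent of the second projection $p \colon \Bun_G^{(x)} \times \Gr_{G,x} \to \Gr_{G,x}$. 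Recording this in the Cartesian square
$$
\begin{array}{ccc}
\Bun_G^{(x)} \times \Gr_{G,x} & \xto{p} & \Gr_{G,x} \\
\downarrow q_\glob & & \downarrow q_\loc \\
\Hecke^\glob_{G,x} & \xto{r} & \Hecke^\loc_{G,x}
\end{array}
$$
whose vertical arrows are $\GO$-torsors, base change together with the t-exactness of $q_\loc^!$ and $q_\glob^!$ (built into the t-structure on strongly $\GO$-equivariant D-modules) reduces the theorem to showing that $p^!$ preserves infinitely connective objects.

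Since $p$ is projection from a product, $p^!(\F) \simeq \omega_{\Bun_G^{(x)}} \boxtimes \F$. The external product with an object of bounded cohomological amplitude preserves infinite connectivity in the second slot, so it suffices to verify that $\omega_{\Bun_G^{(x)}}$ lies in a bounded region of the t-structure. For this, I would observe that $\Bun_G^{(x)}$ is a pro-algebraic $\GO$-torsor over $\Bun_G$, whose base $\Bun_G$ is a smooth Artin stack locally of finite type; hence $\omega_{\Bun_G}$ is concentrated in a bounded range of cohomological degrees. Under the renormalization conventions for pro-smooth torsors under pro-algebraic groups, the $!$-pullback along $\Bun_G^{(x)} \to \Bun_G$ is t-exact, so $\omega_{\Bun_G^{(x)}}$ inherits the bounded amplitude of $\omega_{\Bun_G}$.

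The main obstacle is to make sure that the $\GO$-torsor formalism used both for the vertical pullbacks and for the boundedness of $\omega_{\Bun_G^{(x)}}$ genuinely matches the t-structure conventions adopted earlier in the paper. The delicate point is the infinite-dimensionality of the pro-algebraic group $\GO$: one must verify that the relevant $!$-pullbacks are genuinely t-exact rather than shifted by an infinite amount, within the precise formalism employed. Once this is done, the theorem follows from base change, the projection formula for $p^!$, and the compatibility of the t-structure with external products.
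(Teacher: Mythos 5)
Your Cartesian square is a valid geometric observation: $\Hecke^\glob_{G,x}$ is indeed the twisted product $\Bun_G^{(x)}\overset{\GO}{\times}\Gr_{G,x}$ (Beauville--Laszlo), and $r$ is the descent of the second projection, so the square commutes and is Cartesian. The high-level logic -- base-change, $q_\loc^!$ t-exact by construction of the t-structure on $\Sph_G$, $q_\glob^!$ t-exact and conservative, then bound $\omega_{\Bun_G^{(x)}}$ -- is coherent as a plan. However, this is \emph{not} the route the paper takes, and the step you yourself flag as ``delicate'' is a genuine gap rather than a routine verification.

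The gap is that $\Bun_G^{(x)}$ is a pro-object (an inverse limit of level structures $\Bun_G^{(n\cdot x)}$ with smooth affine transition maps of growing relative dimension), and the paper's t-structure formalism is only set up for locally QCA stacks and ind-stacks. To make $\omega_{\Bun_G^{(x)}}$ bounded you must shift the t-structure on $\Dmod(\Bun_G^{(n)})$ by $\dim G_n$ at each level; at the same time, $q_\glob$ restricted to level $n$ is a smooth $G_n$-torsor of relative dimension $\dim G_n$, so $q_\glob^!$ is only t-exact up to that same growing shift with respect to the \emph{intrinsic} (ind-stack) t-structure on $\Dmod(\Hecke^\glob_{G,x})$. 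Whether one consistent renormalization of a t-structure on $\Dmod(\Bun_G^{(x)}\times\Gr_{G,x})$ simultaneously makes $q_\glob^!$ t-exact, makes $\omega_{\Bun_G^{(x)}}$ bounded, and is compatible with external products in the second slot is exactly the nontrivial content; it cannot be summarized as ``built into the t-structure on strongly $\GO$-equivariant D-modules,'' since the paper's t-structure on $\Sph_G$ is defined \emph{ad hoc} via $\oblv^\GO$ (with no dimension shift), while the t-structure on $\Dmod(\Hecke^\glob_{G,x})$ is defined via the ind-stack filtration, and reconciling the two across an infinite-dimensional torsor is not automatic.

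The paper sidesteps all of this. It first observes (Lemma on smooth-local t-goodness) that the property may be checked after base-change to an affine $S\to\Bun_{G\times G}$, then uses Drinfeld--Simpson to trivialize the two bundles on $X-D_{\ul x}$ \'etale-locally on $S$ (on $X^*$ directly when $G$ is semisimple). This identifies the fiber with a copy of the honest ind-scheme $G[\Sigma]$ (a sub-ind-scheme cut out by pole conditions at the auxiliary points in the reductive case), and the map to $\Hecke^\loc_{G,x}$ factors through concrete maps of ind-schemes of ind-finite type, ultimately reducing to the t-excellence of a base-change of $\pt\to\Bun_G$ along $\Gr_{G,x}\to\Bun_G$. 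In other words, the paper's trivialization is on the \emph{punctured curve}, yielding the global loop group $G[X^*]$, rather than on the \emph{formal disk}, yielding the pro-stack $\Bun_G^{(x)}$; the former keeps everything within the ind-finite-type world where the t-structure bookkeeping is unambiguous. If you want to pursue your route, you would need to either develop the pro-smooth renormalization formalism from scratch and show it interacts correctly with the ind-stack t-structure on $\Dmod(\Hecke^\glob_{G,x})$, or truncate to finite level and track how the square fails to be Cartesian -- either of which is a substantial additional argument.
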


\ssec{Automorphic gluing for $SL_2$}

In the second part of the paper, we restrict to $G=SL_2$ and prove the \emph{automorphic gluing theorem} in that case.

\sssec{} \label{sssec:rough}

Let us recall the statement, which was briefly discussed in \cite{shvcatHH} and \cite{epiga}. Roughly, the theorem says that any $\F \in \Dmod(\Bun_{SL_2})$ can be reconstructed from its tempered part and its constant term.
To make this idea precise, we first need to refine the notion of constant term to make it Hecke-equivariant, as follows.

\sssec{}

The issue with the usual constant term functor (here $G$ is any group, $P \subseteq G$ a parabolic with Levi quotient $M$)
$$
\CT_P:
\Dmod(\Bun_G) 
\xto{(\p_B)^!} 
\Dmod(\Bun_P)
\xto{(\q_B)_*}
\Dmod(\Bun_M)
$$
is that the target DG category does not carry an action of $\Sph_G$, hence it does not even make sense to ask whether this functor is $\Sph_G$-equivariant. 

\sssec{}

Following \cite{Outline} and \cite{Barlev} (see also \cite{shvcatHH} for another point of view), there is a canonical way to modify $\Dmod(\Bun_P)$ and $\Dmod(\Bun_M)$ to make each of them $\Sph_G$-equivariant: one replaces $\Dmod(\Bun_P)$ with $\Dmod(\Bun_G^{P\ggen})$ and $\Dmod(\Bun_M)$ with $I(G,P)$. 
We postpone the definitions of $\Bun_G^{P\ggen}$ and $I(G,P)$ to Section \ref{ssec:aut-gluing-statement}; here are however the adelic analogies:
\begin{eqnarray}
\nonumber
\Dmod(\Bun_G) & \approx & \Fun( \GO \backsl \GA / G(\bbF_X))  \\
\nonumber
\Dmod(\Bun_P)& \approx & \Fun( P(\OO) \backsl P(\AA) / P(\bbF_X)) \\
\nonumber
\Dmod(\Bun_G^{P \ggen}) 
& \approx & 
\Fun(G(\OO) \backsl G(\AA) / P(\bbF_X)) \\
\nonumber
\Dmod(\Bun_M) 
& \approx &
\Fun( M(\OO) \backsl M(\AA) / M(\bbF_X)) \\
\nonumber
I(G,P) 
& \approx &
\Fun( G(\OO) \backsl G(\AA) / U(\AA) M(\bbF_X)).
\end{eqnarray}
These analogies hopefully make it clear that $\Sph_G$ acts by convolution on $\Dmod(\Bun_G^{P \ggen})$ and $I(G,P)$, but not on $\Dmod(\Bun_P)$ and $\Dmod(\Bun_M)$. 

\sssec{}

Following \cite{Outline} again, one defines a $\Sph_G$-linear functor
$$
\CT_P^{\enh}: 
\Dmod(\Bun_G)
\longto
I(G,P)
$$
whose composition with the natural forgetful (conservative) functor
$$
\oblv_{G,P}: I(G,P) \to \Dmod(\Bun_M)
$$
is the usual constant term $\CT_P$. For this reason, the first functor is called \emph{enhanced constant term}.

\sssec{}

Now, since $I(G,P)$ is endowed with an action of $\Sph_G$, it makes sense to consider its tempered subcategory $I(G,P)^\temp$ and the associated temperization functor $I(G,P) \tto I(G,P)^\temp$. 
The commutativity of the following diagram is tautological:
\begin{equation} 
\nonumber
\begin{tikzpicture}[scale=1.5]
\node (00) at (0,0) {$ \Dmod(\Bun_G) ^\temp$};
\node (10) at (2.5,0) {$ I(G,P) ^\temp$.};
\node (01) at (0,1) {$ \Dmod(\Bun_G)$};
\node (11) at (2.5,1) {$I(G,P)$};
\path[->,font=\scriptsize,>=angle 90]
(00.east) edge node[above] {$\CT_P^\enh$}  (10.west); 
\path[->,font=\scriptsize,>=angle 90]
(01.east) edge node[above] {$\CT_P^\enh$} (11.west); 
\path[->>,font=\scriptsize,>=angle 90]
(01.south) edge node[right] {$\temp$} (00.north);
\path[->>,font=\scriptsize,>=angle 90]
(11.south) edge node[right] {$\temp$} (10.north);
\end{tikzpicture}
\end{equation}

\begin{mainthm}[Automorphic gluing for $G=SL_2$] \label{mainthm:aut-gluing}
When $G = SL_2$ and $P=B$, the above diagram is a fiber square.
\end{mainthm}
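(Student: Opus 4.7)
The plan is to reduce Theorem \ref{mainthm:aut-gluing} to an equivalence between the antitempered subcategories of $\Dmod(\Bun_G)$ and $I(G,B)$, and then to establish this equivalence by exhibiting the enhanced Eisenstein series as a quasi-inverse of $\CT_B^\enh$ on those subcategories.

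First, observe that the vertical arrows in the diagram are $\Sph_G$-linear Bousfield localizations onto the tempered subcategories. For a commutative square in a stable $\infty$-category whose two vertical arrows are such localizations, the square is a pullback if and only if the induced functor on the fibers of the vertical arrows is an equivalence. By construction these fibers are the antitempered subcategories $\Dmod(\Bun_G)^\antitemp$ and $I(G,B)^\antitemp$. Hence the theorem reduces to showing that
$$
\CT_B^\enh: \Dmod(\Bun_G)^\antitemp \xrightarrow{\sim} I(G,B)^\antitemp
$$
is an equivalence of DG categories.

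To produce the inverse, I would use the enhanced Eisenstein series $\Eis_B^\enh: I(G,B) \to \Dmod(\Bun_G)$, that is, the $\Sph_G$-linear adjoint of $\CT_B^\enh$. Since both functors are $\Sph_G$-linear, they automatically preserve tempered and antitempered directions, so they restrict to functors between antitempered subcategories. The task is to verify that the unit and counit of the adjunction become equivalences after restriction to the antitempered parts. Because $G = SL_2$ has Weyl group of order two, the enhanced correspondence $\Bun_G^{B\ggen} \utimes_{\Bun_G} \Bun_G^{B\ggen}$ (and its counterpart governing the other composition) admits a stratification with essentially two pieces, labelled by the identity and by the non-trivial Weyl element $s$. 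The identity stratum produces the identity functor, while the $s$-stratum gives a \emph{standard intertwiner}. My plan is to show that this intertwiner is a $*$-extension from a quasi-compact open substack of $\Bun_G^{B\ggen}$; by Theorem \ref{mainthm:star} it is then tempered, and since it also lies in the antitempered direction by $\Sph_G$-linearity, it must vanish.

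The main obstacle will be the precise geometric analysis of the two-stratum decomposition of the enhanced correspondence and the identification of the $s$-stratum's contribution as a star-extension from a quasi-compact open. Here I expect Theorem \ref{mainthm:Hecke} to be the essential technical input: its statement that $r^!: \Sph_G \to \Dmod(\Hecke^\glob_{G,x})$ preserves infinitely connective objects converts local t-structure estimates into global ones, which is exactly what is needed to detect the tempered/antitempered decomposition of the intertwiner through Hecke-theoretic methods rather than by brute-force analysis on $\Bun_G$. Combined with the relatively explicit Harder--Narasimhan stratification of $\Bun_{SL_2}$ near infinity, this should suffice to verify the vanishing of the off-diagonal contribution and hence complete the proof.
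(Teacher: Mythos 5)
Your reduction step is essentially correct: since the vertical arrows of the square are $\Sph_G$-linear localizations and since any $\Sph_G$-linear functor preserves the tempered/anti-tempered decomposition, the square being a pullback is equivalent to $\CT_B^\enh$ restricting to an equivalence $\Dmod(\Bun_{SL_2})^\antitemp \xto{\sim} I(SL_2,B)^\antitemp$. This is also the endpoint of the paper's proof (see the final corollary of the section). However, your proposed method for establishing this equivalence diverges substantially from the paper and has a genuine gap.

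The central problem is that your key claim --- that the ``intertwiner'' piece of $\Eis_B^\enh \CT_B^\enh$ (resp.\ $\CT_B^\enh \Eis_B^\enh$) is a $*$-extension from a quasi-compact open --- conflates functors with objects. Temperedness and the $*$-extension condition of \thmref{mainthm:star} are properties of \emph{objects} of $\Dmod(\Bun_G)$, not of endofunctors. What is actually needed is that the relevant \emph{cofiber functor} annihilates anti-tempered objects. In the paper this is achieved not by a Bruhat-stratification argument on $\Bun_G^{B\ggen} \times_{\Bun_G} \Bun_G^{B\ggen}$, but by recognizing $\coker(\Eis_B^\enh \CT_B^\enh \to \id)$ as the Deligne--Lusztig functor $\DL_{SL_2}$ and invoking Chen's theorem ($\DL_G \simeq \TBunG[\text{shift}]$) together with \cite[Lemma 2.1.9]{DL} and \thmref{mainthm:star} to conclude that its image lies in $\ltemp\Dmod(\Bun_G)$ (\corref{cor:DL}). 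Even if your stratification could be set up precisely, you would in effect be re-deriving Chen's theorem, which is a substantial result in its own right. Moreover, a direct argument on $\Bun_G^{B\ggen}$ would require an analogue of \thmref{mainthm:star} for that prestack, which is not established in the paper.

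Two further misattributions. First, \thmref{mainthm:Hecke} plays no role in the paper's proof of \thmref{mainthm:aut-gluing}: it is only used as an intermediate step towards \thmref{mainthm:star}. Second, you do not mention the result that \emph{is} the technical crux of the essential-surjectivity half, namely \thmref{mainthm:rank1} (together with its companion \thmref{thm:omegas}): for $G=SL_2$, the anti-tempered subcategories of $\Dmod(\Bun_G)$ and $\Dmod(\Bun_G^{B\ggen})$ are generated under colimits by $\omega_{\Bun_G}$ and $\omega_{\Bun_G^{B\ggen}}$ respectively. With those generators in hand, the paper simply computes $\CT_B^\enh(\omega_{\Bun_G}) \simeq \omega_{\Bun_G^{B\ggen}}$ and $\Eis_B^\enh(\omega_{\Bun_G^{B\ggen}}) \simeq \omega_{\Bun_G}$ (the latter using contractibility of the fibers of $\Bun_G^{B\ggen} \to \Bun_G$). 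Your plan contains no mechanism for replacing this step; a vague appeal to the Harder--Narasimhan stratification near infinity does not yield the needed generation statement.
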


\sssec{}

In other words, the natural functor
$$
\gamma:
\Dmod(\Bun_{SL_2})
\longto
\Dmod(\Bun_{SL_2})^\temp
\utimes_{ I(SL_2, B) ^\temp}
I(SL_2, B),
$$
induced by temperization and enhanced constant term, is an equivalence. This implements the original rough idea of Section \ref{sssec:rough}: in short, to make the statement precise, we need to use the enhanced version of $\CT_B$ and take gluing into account.

\begin{rem}
The above equivalence ought to correspond, under geometric Langlands, to the equivalence
$$
\gamma^{\spec}:
\ICoh_{\Nch}(\LS_{\Gch})
\xto{ \;\; \simeq \;\;}
\QCoh(\LS_{\Gch})
\utimes_{
\QCoh \bigt{ (\LS_{\Gch})^\wedge_ {\LS_{\check{B}}} }
}
\ICoh_0 \bigt{ (\LS_{\Gch})^\wedge_{\LS_{\check{B}}} }
$$
with $\Gch = PGL_2$.
This equivalence was constructed and proven in \cite{epiga}, relying heavily on \citep{AG2}.
\end{rem}

\sssec{}

The proof of Theorem \ref{mainthm:aut-gluing} splits in two parts: fully faithfulness and essential surjectivity of $\gamma$. We will see that the fully faithfulness is a corollary of Theorem \ref{mainthm:star}. On the other hand, the essential surjectivity requires the following result, which might be of interest in its own right:

\begin{mainthm} \label{mainthm:rank1}
For $G=SL_2$, let $\Y$ be one of $\Gr_G$, $\Bun_G$, $\Bun_G^{B \ggen}$. Denote by $p_\Y: \Y \to \pt$ the structure map.
Then an object $\F \in \Dmod(\Y)$ is tempered if and only if $(p_\Y)_!(\F) \simeq 0$.
\end{mainthm}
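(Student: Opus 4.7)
The plan is to prove the two implications separately, treating all three choices of $\Y$ in parallel.

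For the forward direction $\F \in \Dmod(\Y)^\temp \Rightarrow (p_\Y)_!\F \simeq 0$, the key input will be the anti-temperedness of $\omega_\Y$. For $\Y = \Bun_G$ this is Theorem A of \cite{omega}. For $\Y = \Gr_G$, the sheaf $\omega_{\Gr_G}$ is $\GO$-equivariant (being $!$-pulled back from a point), hence an object of $\Sph_G$; its anti-temperedness can be read off either from the convolution compatibility $(p_{\Gr_G})_!(A \star \F) \simeq (p_{\Gr_G})_!(A) \otimes (p_{\Gr_G})_!(\F)$, which reduces the claim to $(p_{\Gr_G})_!(\B) \simeq 0$, or, more conceptually, from Theorem \ref{thm:omega} together with the t-structure estimate of Theorem \ref{mainthm:Hecke}. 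For $\Y = \Bun_G^{B\ggen}$, anti-temperedness transfers from the $\Bun_G$ case via the $\Sph_G$-equivariant forgetful map $\Bun_G^{B\ggen} \to \Bun_G$ and the compatibility of $!$-pullback with dualizing sheaves. Once anti-temperedness of $\omega_\Y$ is in hand, the adjunction $(p_\Y)_! \dashv (p_\Y)^!$ yields, for tempered $\F$,
$$
\RHom\bigl( (p_\Y)_!\F,\; \kk \bigr) \simeq \RHom(\F, \omega_\Y) \simeq 0,
$$
and varying over shifts forces $(p_\Y)_!\F \simeq 0$.

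For the reverse direction $(p_\Y)_!\F \simeq 0 \Rightarrow \F \in \Dmod(\Y)^\temp$, I would first use the canonical fiber sequence $\F^\temp \to \F \to \F^\antitemp$ together with the forward direction applied to $\F^\temp$ to deduce $(p_\Y)_!\F \simeq (p_\Y)_!\F^\antitemp$. The problem thus reduces to showing that $(p_\Y)_!$ is conservative on $\Dmod(\Y)^\antitemp$, equivalently that $\omega_\Y$ together with its $\Sph_G$-orbit cogenerates the anti-tempered subcategory. This is where the rank-$1$ hypothesis $G = SL_2$ enters essentially: for $SL_2$, $\Sph_G^\antitemp$ admits a sufficiently explicit description (via derived Satake, or via the explicit shape of $\B$) that the convolution action on $\omega_\Y$ sweeps out all of $\Dmod(\Y)^\antitemp$. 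Concretely, one propagates the hypothesis $\RHom(\F, \omega_\Y) = 0$ to $\RHom(\F, A \star \omega_\Y) = 0$ for all $A \in \Sph_G$, and the cogeneration statement then forces $\F \simeq 0$.

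The main obstacle is the cogeneration claim appearing in the reverse direction. The forward direction essentially repackages results from \cite{omega} together with the convolution compatibility of $(p_{\Gr_G})_!$; by contrast, the cogeneration of $\Dmod(\Y)^\antitemp$ by $\omega_\Y$ is the genuinely new ingredient and exploits the rank-$1$ structure of $\Sph_{SL_2}$. I would attack it by combining Theorem \ref{mainthm:Hecke}, which controls the behavior of infinitely connective objects of $\Sph_G$ acting on $\Dmod(\Y)$, with the Whittaker-type characterization of anti-temperedness via $\WS_0$, which in rank $1$ reduces cogeneration to a computable assertion about the convolution $\WS_0 \star \omega_\Y$.
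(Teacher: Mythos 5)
The forward direction of your proposal is essentially the paper's: the content is that $\omega_\Y$ is anti-tempered, and the implication $\F$ tempered $\Rightarrow (p_\Y)_!\F \simeq 0$ then follows by adjunction, as you say. One correction, though: your first suggested route to the anti-temperedness of $\omega_{\Gr_G}$ is circular. To deduce $\B \star \omega_{\Gr_G} \simeq 0$ from $(p_{\Gr_G})_!(\B \star \omega_{\Gr_G}) \simeq 0$ you would need precisely the conservativity of $(p_{\Gr_G})_!$ on tempered objects, which is (the contrapositive of) the other implication of the theorem. The correct route is your second one, but the relevant input is Corollary \ref{cor:antitemp=t-nil} (that $\Sph_G^{\antitemp} = \Sph_G^{\leq -\infty}$), combined with the infinite connectivity of $\omega_{\Gr_G}$ from \cite{omega}, not Theorem \ref{mainthm:Hecke}.

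The reverse direction has a genuine gap. You correctly reduce to conservativity of $(p_\Y)_!$ on $\Dmod(\Y)^{\antitemp}$ and correctly flag this as the place where the rank-one hypothesis enters, but the route you sketch does not close. The claimed equivalence with ``cogeneration by the $\Sph_G$-orbit of $\omega_\Y$'' is problematic: the hypothesis $(p_\Y)_!\A = 0$ only gives $\RHom(\A, \omega_\Y \otimes V) = 0$ for $V \in \Vect$, and upgrading this to vanishing of $\RHom(\A, B \star \omega_\Y)$ for all $B \in \Sph_G$ requires, for $\Y = \Gr_G$, the observation that the convolution correspondence splits so $B \star \omega_{\Gr_G} \simeq (p_{\Gr_G})_!(\oblv^{\GO}B) \otimes \omega_{\Gr_G}$ (not spelled out in your proposal), and is not clear at all for $\Y = \Bun_G$ or $\Bun_G^{B\ggen}$. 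More seriously, even granting the propagation, the actual conservativity/cogeneration is left as a black box with a vague appeal to $\WS_0$. What the paper actually proves is much sharper (Theorem \ref{thm:antitemp Sph SL2}): derived Satake and Koszul duality identify $\lantitemp\Sph_{SL_2}$ with $\kk[\epsilon,\eta]\mod$, under which $\bbone_{\Sph}^{\antitemp} \leftrightarrow \kk[\epsilon]$ and $\omega_{\Sph_G} \leftrightarrow \kk$; hence $\bbone_{\Sph}^{\antitemp}$ is a \emph{finite} (two-term) limit of shifted copies of $\omega_{\Sph_G}$, so every anti-tempered object of $\Dmod(\Gr_{SL_2})$ is a finite (co)fiber of a map $V \otimes \omega_{\Gr_G} \to V' \otimes \omega_{\Gr_G}$ (Corollary \ref{cor:antitemp Gr}). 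That finiteness is the whole point: $\RHom(\A,-)$ commutes with finite limits, so the orthogonality you have genuinely propagates; an infinitary generation statement would not suffice, and your proposed $\WS_0 \star \omega_\Y$ computation does not obviously produce it. Finally, the paper does not treat the three $\Y$ in parallel: the $\Bun_G$ and $\Bun_G^{B\ggen}$ cases are reduced to $\Gr_G$ via the conservative $\Sph_G$-linear pullback $\pi^!$ (using $\Dmod(\Bun_G) \simeq \Dmod(\Gr_G)^{G[X^*]}$ and its $B[X^*]^{\rat}$ analogue) together with the projection-formula computation $(p_{\Gr_G})_!\pi^!\F \simeq (p_{\Bun_G})_!\F \otimes H_*(G[X^*])$. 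Without this descent step your argument has no traction on the global cases.
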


\begin{rem}
The above result was stated as a conjecture in \cite[Remark 1.2.2]{omega}.
\end{rem}

\ssec{Structure of the paper}

In Section \ref{sec:temp objects}, we discuss some background material and then prove our various characterizations of temperedness.
In Section \ref{sec:proofs}, we prove Theorem \ref{mainthm:Hecke} and deduce Theorem \ref{mainthm:star} from it.
In Section \ref{sec:SL2}, we study $\Sph_{SL_2}^\antitemp$ and prove Theorem \ref{mainthm:rank1}. In Section \ref{sec:aut-gluing}, we use Theorem \ref{mainthm:star} and  Theorem \ref{mainthm:rank1} together to deduce the automorphic gluing theorem. Finally, in Section \ref{sec:preview}, we briefly explain how to modify Gaitsgory's outline (\cite{Outline}) of the proof of Geometric Langlands for $G=GL_2$ to cover the case of $G=SL_2$: the new ingredients at play are the automorphic gluing theorem and the Deligne-Lusztig duality on the spectral side.

\ssec{Notations}

Our notations are in line with those used in \cite{omega}. In particular, we invite the reader to consult \cite[Section 2]{omega} for a comprehensive review.

We remind that we use the cohomological conventions for t-structures: for $\C$ a DG category equipped with a t-structure, $\C^{\leq 0}$ (the full subcategory of connective objects) is left orthogonal to $\C^{\geq 1}$ (at the level of the triangulated category underlying $\C$). 
A functor $f: \C \to \D$ between DG categories with t-structures is said to be left t-exact if $f(\C^{\geq 0})\subseteq \D^{\geq 0}$ and right t-exact if $f(\C^{\leq 0})\subseteq \D^{\leq 0}$.

\ssec{Acknowledgements}

I would like to thank Tony Scholl for asking about the Ramanujan conjecture in geometric Langlands (Cambridge, Feb 2020), Sam Raskin for a suggestion that helped simplify the proof of Theorem \ref{mainthm:rank1}, Dennis Gaitsgory for countless generous explanations throughout the years.

\sec{Tempered and anti-tempered objects} \label{sec:temp objects}

In this section, we discuss the notions of tempered and anti-tempered objects from the point of view of the t-structures. This point of view seems to be new: it did not appear in the previous treatments (\cite[Section 12]{AG1} and \cite{omega}) of $\Dmod(\Bun_G)^\temp$.

\ssec{Some standard t-structures}

Here we review certain definitions given in \cite{ICoh}, \cite{finiteness} and \cite{AG1}.
We start with the definition of the natural t-structure on the DG category of ind-coherent sheaves in the quasi-smooth case. Then we pass to the discussion of t-structures on DG categories of D-modules on stacks and, afterwards, on ind-stacks.

\sssec{}

Our conventions on stacks follow those of \cite{DG-cptgen} and \cite{finiteness}, namely we will consider algebraic stacks that are locally QCA. The \virg{local QCA}, as opposed to just \virg{QCA}, is allowed to accommodate for $\Bun_G$.

\sssec{}

Let $\Y$ be a quasi-smooth QCA derived stack: in practice, we will be interested in only one example: $\LSGch:= \LSGch(X)$. Attached to $\Y$ is its stack of singularities $\Sing(\Y) := H^{-1}(T^* \Y)$.
Given any conical closed subset $M \subseteq \Sing(\Y)$, consider the DG category $\ICoh_M(\Y)$ of \emph{ind-coherent sheaves with singular support contained in $M$}. This is a full subcategory of $\ICoh(\Y)$. If $M$ contains the zero section of $\Sing(\Y)$, then there is a colocalization
\begin{equation}
\nonumber
\begin{tikzpicture}[scale=1.5]
\node (a) at (0,1) {$\Xi_{0 \to M}:\QCoh(\Y)$};
\node (b) at (3,1) {$\ICoh_M(\Y):\Psi_{0 \to M}.$};
\path[right hook ->,font=\scriptsize,>=angle 90]
([yshift= 1.5pt]a.east) edge node[above] {$ $} ([yshift= 1.5pt]b.west);
\path[->>,font=\scriptsize,>=angle 90]
([yshift= -1.5pt]b.west) edge node[below] {$ $} ([yshift= -1.5pt]a.east);
\end{tikzpicture}
\end{equation}

\sssec{} 

Consider the standard t-structure on $\QCoh(\Y)$: an object $\F$ belongs to $\QCoh(\Y)^{\leq 0}$ iff its pullback to some (equivalently, any) atlas is connective.
By reducing to affine DG schemes and then to $\Vect$, one checks the well-known fact that $\QCoh(\Y)^{\leq - \infty} \simeq 0$.

\sssec{} \label{sssec:icoh t-str}

Using the above t-structure on $\QCoh(\Y)$, one defines a t-structure on $\ICoh_M(\Y)$ by setting
$$
\ICoh_M(\Y)^{\leq 0}
:=
\{
\F \in \ICoh_M(\Y) \; 
|
\; 
\Psi_{0 \to M}(\F) \in \QCoh(\Y)^{\leq 0}
\}.
$$
In other words, $\Psi_{0 \to M}$ is right t-exact by design. One quickly deduces that $\Xi_{0 \to M}$ is right t-exact too. It then follows that $\Psi_{0 \to M}$ is t-exact: indeed, being right adjoint to a right t-exact functor, it is left t-exact.
The following simple observation is crucial for us.

\begin{lem} \label{lem:ker(Psi)}
In the situation above, we have:
$$
\ICoh_M(\Y)^{\leq -\infty} \simeq \ker(\Psi_{0 \to M}).
$$ 
In particular, $\ICoh_M(\Y)^{\leq -\infty}$ is the right orthogonal of the inclusion $\Xi_{0 \to M}: \QCoh(\Y) \hto \ICoh_M(\Y)$.
\end{lem}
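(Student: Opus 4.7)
The plan is to unpack the two parts of the statement and reduce both to elementary consequences of the t-exactness of $\Psi_{0 \to M}$ combined with the vanishing $\QCoh(\Y)^{\leq -\infty} \simeq 0$ recorded just above.

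First I would argue the identity $\ICoh_M(\Y)^{\leq -\infty} \simeq \ker(\Psi_{0 \to M})$. The definition in \sssec{sssec:icoh t-str} only states that $\F \in \ICoh_M(\Y)^{\leq 0}$ iff $\Psi_{0 \to M}(\F) \in \QCoh(\Y)^{\leq 0}$, but since $\Psi_{0 \to M}$ has already been shown to be t-exact, the shifted version $\F \in \ICoh_M(\Y)^{\leq -n}$ iff $\Psi_{0 \to M}(\F) \in \QCoh(\Y)^{\leq -n}$ holds for every $n$. Taking the intersection over $n$, this gives $\F \in \ICoh_M(\Y)^{\leq -\infty}$ iff $\Psi_{0 \to M}(\F) \in \QCoh(\Y)^{\leq -\infty}$. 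By the observation that $\QCoh(\Y)^{\leq -\infty} \simeq 0$, the right-hand condition is equivalent to $\Psi_{0 \to M}(\F) \simeq 0$, which is exactly membership in $\ker(\Psi_{0 \to M})$.

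For the \emph{in particular} clause, I would invoke the standard yoga of colocalizations. Since $(\Xi_{0 \to M}, \Psi_{0 \to M})$ is an adjunction with $\Xi_{0 \to M}$ fully faithful, the right orthogonal to the essential image of $\Xi_{0 \to M}$ coincides with $\ker(\Psi_{0 \to M})$: indeed, for any $\F \in \ICoh_M(\Y)$ and $\G \in \QCoh(\Y)$, one has $\Hom(\Xi_{0 \to M}(\G), \F) \simeq \Hom(\G, \Psi_{0 \to M}(\F))$, so $\F$ is right-orthogonal to all of $\Xi_{0 \to M}(\QCoh(\Y))$ precisely when $\Psi_{0 \to M}(\F) \simeq 0$. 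Combined with the previous paragraph, this identifies the right orthogonal with $\ICoh_M(\Y)^{\leq -\infty}$.

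There is no serious obstacle: the entire argument rests on the t-exactness of $\Psi_{0 \to M}$ (already established in \sssec{sssec:icoh t-str}) and on the vanishing of $\QCoh(\Y)^{\leq -\infty}$ (deduced there by reduction to affine DG schemes and $\Vect$). The only point worth flagging is the upgrade of the defining equivalence at level $0$ to all shifts, which, as noted, is immediate from t-exactness and makes the identification of infinitely connective objects with $\ker(\Psi_{0 \to M})$ automatic.
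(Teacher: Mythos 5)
Your proof is correct and follows essentially the same two-step argument as the paper: the inclusion $\ker(\Psi_{0\to M})\subseteq\ICoh_M(\Y)^{\leq-\infty}$ is immediate, and the reverse inclusion is forced by $\QCoh(\Y)^{\leq-\infty}\simeq 0$. You are more explicit than the paper in spelling out the shift-compatible version of the defining condition and in verifying the \emph{in particular} clause via the colocalization adjunction, both of which the paper leaves implicit; the only minor cosmetic point is that the upgrade from level $0$ to all levels $-n$ really comes from compatibility of the defining ``iff'' with shifts rather than from t-exactness per se, though this does not affect the argument.
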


\begin{proof}
The inclusion  $\ker(\Psi_{0 \to M}) \subseteq \ICoh_M(\Y)^{\leq -\infty}$ is obvious. The opposite inclusion follows from the fact that $\QCoh(\Y)^{\leq -\infty} \simeq 0$.
\end{proof}

\sssec{}

Now let $\Y$ be an algebraic stack, not necessarily quasi-compact and not necessarily quasi-smooth. Recall that $\ICoh(\Y)$ is equipped with a t-structure defined in the same as above, that is, by requiring that $\Psi_\Y:\ICoh(\Y) \to \QCoh(\Y)$ be right t-exact. 
This t-structure is used in the following definition.

\sssec{}

Our next task is to discuss t-structures on categories of D-modules.
Let $\Y$ be an algebraic stack as above. We define a t-structure on $\Dmod(\Y)$ by declaring that the forgetful functor $\oblv_{\Y,R}: \Dmod(\Y) \to \ICoh(\Y)$ be left t-exact.

\begin{lem} \label{lem:Dmod t-properties}
Let $f: \X \to \Y$ be a map of locally QCA algebraic stacks. We have:
\begin{enumerate}
\item
If $f = j$ is an open embedding, then $j^*: \Dmod(\Y) \tto \Dmod(\X)$ is t-exact.
\item
If $f = i$ is a closed embedding, then $i_*: \Dmod(\X) \hto \Dmod(\Y)$ is t-exact.
\item
The t-structure on $\Dmod(\Y)$ is Zariski-local.
\item
If $f$ is smooth of dimension $d$, then $f^![-d]:  \Dmod(\Y) \to \Dmod(\X)$ is t-exact.
\item
If $f$ is affine, then $f_*:  \Dmod(\X) \to \Dmod(\Y)$ is right t-exact.
\item
$\Dmod(\Y)^{\leq -\infty} \simeq 0$.
\end{enumerate}
 
\end{lem}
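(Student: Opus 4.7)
The plan is to reduce each of the six properties to a local calculation by first establishing Zariski-locality (3), and then to import standard facts about the analogous $\ICoh$ and $\QCoh$ t-structures. Assertion (3) is essentially built into the definition: if $\Y$ is covered by QCA opens $\{\U_\alpha\}$ and $\{\V_\beta\}$, then requiring $j^!$ to be t-exact for the inclusions of the $\U_\alpha$ is equivalent to requiring it for the $\V_\beta$, because each intersection $\U_\alpha \cap \V_\beta$ is again a QCA open, and $j^!$ for an open embedding of QCA stacks is t-exact by the $\oblv$-based definition combined with the paragraph defining the $\ICoh$-t-structure in Section~\ref{sssec:icoh t-str}.

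Once (3) is in hand, parts (1), (2), (4), (5) become local on the target. For (1), open $*$- and $!$-pullback of D-modules coincide (relative dimension zero), so t-exactness of $j^!$ on QCA opens gives t-exactness of $j^*$ in general. For (4), I would use smooth descent to reduce the t-exactness of $f^![-d]$ to the corresponding statement on a smooth atlas, where it is the standard shift identity for smooth pullback of D-modules. For (5), affine pushforward is right t-exact on $\QCoh$ of affine schemes by the vanishing of higher cohomology on affine schemes; this propagates to $\ICoh$ via $\Psi_{0 \to M}$ and then to $\Dmod$ via $\oblv$. For (2), I would invoke Kashiwara's equivalence to identify $\Dmod(\X)$ with the full subcategory of $\Dmod(\Y)$ supported on $\X$; t-exactness of $i_*$ then reduces, after picking a QCA atlas, to t-exactness of $i_*^{\ICoh}$ for closed embeddings of affine DG schemes, which in turn follows from the corresponding $\QCoh$ statement.

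The main obstacle will be (6), the left-separatedness of the t-structure. By (3) I may assume $\Y$ is QCA. In that case, the statement is a corollary of the left-completeness of the t-structure on $\Dmod$ of a QCA algebraic stack, which is one of the main results of \cite{finiteness}. Thus I would not prove (6) from scratch; rather, it is imported as an external input, and the remaining content of the lemma is bookkeeping meant to package the properties of the t-structure needed in subsequent sections (especially in the extension to ind-stacks).
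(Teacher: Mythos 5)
Your proposal is correct, but for the key assertion (6) it takes a genuinely different route from the paper. You reduce, via Zariski-locality, to the QCA case and then import left-completeness of the D-module t-structure from \cite{finiteness}, noting (correctly) that left-completeness is stronger than the vanishing $\Dmod(\Y)^{\leq -\infty} \simeq 0$. The paper instead gives a self-contained, elementary reduction that uses only the other parts of the lemma as stepping stones: by (4) one passes to a smooth atlas and so to schemes locally of finite type; by (3) one reduces to the affine case; by (2), via Kashiwara's theorem, one embeds t-exactly into $\Dmod(\bbA^n)$; and the claim for $\bbA^n$ is then verified directly by observing that the forgetful functors $\Dmod(\bbA^n) \to \QCoh(\bbA^n) \to \Vect$ are conservative and t-exact up to a finite shift, so that $\Dmod(\bbA^n)^{\leq -\infty} \simeq 0$ follows from $\Vect^{\leq -\infty} \simeq 0$. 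The paper's argument is thus entirely internal to the setup of the lemma and avoids invoking the stronger left-completeness statement (which the paper does cite in the Introduction for a separate remark, but not here); your argument is shorter at the cost of relying on a harder external input. Your sketches of (1)--(5) are reasonable and in the spirit of what the paper leaves implicit, though I'd flag that for (5) the D-module pushforward $f_*$ along an affine map is not simply the $\QCoh$ pushforward (it involves the relative de Rham complex), so the reduction to affine $\QCoh$-vanishing needs a word of justification; the right t-exactness of de Rham pushforward along affine maps is nonetheless standard.
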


\begin{proof}
Assertion (1) is obvious: $j^*$ and $j_*$ are both left t-exact by definition; the latter facts implies by adjunction that $j^*$ is also right t-exact. Assertion (3) now follows from (1) via the argument of \cite[Lemma 7.8.7]{ker-adj}.
If $f$ is a smooth atlas, assertion (4) follows from \cite[Lemma 6.3.7]{finiteness}.
Then the general case of (4), as well as (2) and (5), all reduce to the case of schemes where they are well-known. 
Finally, for (6), we can proceed as follows: by (4), it suffices to prove that $\Dmod(U)^{\leq - \infty} \simeq 0$ for any scheme $U$ locally of finite type. By (3), this can be checked Zariski-locally, and so we are reduced to the affine case. Then, by (2), we can embed $\Dmod(U)$ in a t-exact manner into $\Dmod(\AA^n)$ for some $n$. Hence it remains to show that $\Dmod(\AA^n)^{\leq - \infty} \simeq 0$. For this, note that the forgetful functor $\Dmod(\AA^n) \to \QCoh(\AA^n) \xto{H^*(\AA^n,-)} \Vect$ is conservative and t-exact up to a finite shift.
\end{proof}

\sssec{}

Now we extend the above definitions to DG categories of D-modules on ind-stacks.

\begin{defn}
An ind-stack is a filtered colimit of locally QCA stacks under closed embeddings.
\end{defn}

\sssec{}

For an ind-stack $\Y \simeq \colim_j \Y_j$ as defined above, we have $\Dmod(\Y) \simeq \colim_j \Dmod(\Y_j)$, as well as $\Dmod(\Y) \simeq \lim_j \Dmod(\Y_j)$.
The colimit is taken along the structure pushforward functors, the limit under their right adjoints (the $!$-pullback functors).

We define a t-structure on $\Dmod(\Y)$ by declaring $\Dmod(\Y)^{\leq 0}$ to be the full subcategory generated by the essential images of the inclusions $\Dmod(\Y_j)^{\leq 0} \subseteq \Dmod(\Y)$ for all $j$.
These t-structures enjoy properties similar to those of Lemma \ref{lem:Dmod t-properties}.

\begin{rem}
As an example, this construction yields a t-structure on the DG category $\Dmod(\Hecke_{G,x}^{\glob})$ of D-modules on the ind-stack $\Hecke_{G,x}^{\glob}$.
\end{rem}

\sssec{}

It is clear that the above t-structure on $\Dmod(\Y)$ agrees with the usual one when $\Y$ happens to be an ind-scheme, see \cite[Section 2.4.2]{omega} and references therein.
Accordingly, we have t-structures on $\Dmod(\Gr_{G,x})$ and on $\Dmod(G[\Sigma])$. We would like to have a t-structure on $\Sph_G \simeq \Dmod(\Hecke_{G,x}^\loc)$ as well. Since $\Hecke_{G,x}^\loc$ is not an ind-stack, this is not covered by the above paradigm. Therefore, we give a definition \virg{by hands}:

\begin{defn} \label{def:Sph t-str}

We put a t-structure on $\Sph_G := \Dmod(\Gr_G)^\GO$ by declaring the forgetful functor 
$$
\oblv^\GO: \Sph_G \to \Dmod(\Gr_G)
$$
to be right t-exact: an object $\F \in \Sph_G$ is connective iff so is $\oblv^\GO(\F)$.

\end{defn}

\begin{lem}
The functor $\oblv^\GO: \Sph_G \to \Dmod(\Gr_G)$ is t-exact with respect to the above t-structures.
\end{lem}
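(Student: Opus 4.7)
\medskip

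The plan is to verify left t-exactness (right t-exactness being built into the definition) by reducing to a finite-type situation via the Schubert stratification of $\Gr_G$.

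First, I would unpack what must be shown. By Definition \ref{def:Sph t-str}, $\oblv^\GO$ is right t-exact and in fact reflects connectivity; what remains is to show $\oblv^\GO(\Sph_G^{\geq 1}) \subseteq \Dmod(\Gr_G)^{\geq 1}$. The standard adjunction reformulation is to produce a left adjoint $L = \Av_!^\GO$ to $\oblv^\GO$ and show $L$ is right t-exact: then for $\F \in \Sph_G^{\geq 1}$ and $\H \in \Dmod(\Gr_G)^{\leq 0}$, we have $\Hom_{\Dmod(\Gr_G)}(\H, \oblv^\GO \F) \simeq \Hom_{\Sph_G}(L\H, \F) = 0$ because $L\H \in \Sph_G^{\leq 0}$ (the latter by definition of the t-structure on $\Sph_G$). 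Equivalently, I must check that the monad $\oblv^\GO \circ L$ on $\Dmod(\Gr_G)$ is right t-exact.

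Next, I would reduce the construction and the t-exactness of $\Av_!^\GO$ to the case of a connected algebraic group of finite type. Write $\Gr_G = \colim_n \Gr_G^{\leq n}$ as the union of closed finite-dimensional Schubert subschemes. Each $\Gr_G^{\leq n}$ is $\GO$-stable, and the $\GO$-action factors through the finite-type jet group $G_n := G(\OO/t^{n+1}\OO)$, the pro-unipotent kernel $K_n$ of $\GO \twoheadrightarrow G_n$ acting trivially on $\Gr_G^{\leq n}$. Since equivariance under a pro-unipotent group acting trivially is a property, this identifies $\Dmod(\Gr_G^{\leq n})^\GO \simeq \Dmod(\Gr_G^{\leq n})^{G_n}$ and realizes $\Sph_G$ as the filtered colimit of the equivariant categories for $(G_n, \Gr_G^{\leq n})$ along closed-pushforward, which is t-exact. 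Thus it suffices to check that each $\oblv^{G_n}: \Dmod(\Gr_G^{\leq n})^{G_n} \to \Dmod(\Gr_G^{\leq n})$ is t-exact, since the t-structure on $\Sph_G$ is manifestly compatible with this filtered presentation.

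Finally, the finite-type statement is standard: for a connected algebraic group $H$ of finite type acting on a finite-type scheme $Y$, the forgetful functor $\Dmod(Y)^H \simeq \Dmod(H\backslash Y) \to \Dmod(Y)$ is pullback along the smooth $H$-torsor $Y \to H \backslash Y$, hence is t-exact once one adopts the conventions for which smooth pullbacks are t-exact (which is exactly the convention implicit in our definition of the t-structure on $\Sph_G$, where $\oblv^\GO$ is stipulated to reflect connectivity without shift). Assembling across $n$ and invoking the colimit description completes the proof. The principal obstacle I anticipate is bookkeeping: verifying that the natural t-structure on $\Dmod(\Gr_G^{\leq n})$ (from the general ind-stack definition given just before the lemma) and the t-structure on $\Dmod(\Gr_G^{\leq n})^{G_n}$ (from the $H$-torsor atlas) are normalized consistently with Definition \ref{def:Sph t-str}, so that no hidden shifts appear when passing between local and global descriptions.
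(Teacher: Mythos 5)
The paper states this lemma without proof, so there is no argument to compare against; what you propose is a reasonable way to supply one, and I believe it is correct. The reduction to finite-type quotients is the right move, and the point you flag at the end as ``bookkeeping'' is in fact the crux. Let me make it explicit to confirm it closes. Fix $n$ and take $m$ large enough that the $\GO$-action on $\Gr_G^{\leq n}$ factors through $G_m$; set $\pi\colon \Gr_G^{\leq n}\to G_m\backslash\Gr_G^{\leq n}$. By Lemma~\ref{lem:Dmod t-properties}(4), it is $\pi^![-\dim G_m]$ (and not $\pi^!$ itself) that is t-exact for the \emph{natural} D-module t-structures on both sides. Definition~\ref{def:Sph t-str} declares instead that $\pi^!$ reflect connectivity without shift; unwinding, the t-structure this puts on $\Dmod(G_m\backslash\Gr_G^{\leq n})$ is precisely the natural one shifted by $\dim G_m$. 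With that shifted t-structure on the source and the natural one on the target, $\pi^!$ is genuinely t-exact (both right t-exactness and the left t-exactness $\pi^!\bigl((\cdot)^{\geq 1}\bigr)\subseteq(\cdot)^{\geq 1}$ follow formally from the t-exactness of $\pi^![-\dim G_m]$ once the source is reindexed by $\dim G_m$). The normalization is also what makes the identification independent of the choice of $m$: enlarging $m$ changes the natural t-structure on the quotient stack by the dimension of the trivially-acting subquotient, and the shift absorbs exactly this. Two small caveats on phrasing. First, the pro-unipotent kernel $K_m=\ker(\GO\tto G_m)$ does not literally act trivially on $\Gr_G^{\leq n}$ for $m=n$; what is true (and suffices) is that the action factors through $G_m$ for $m\gg n$, and the resulting identifications $\Dmod(\Gr_G^{\leq n})^{\GO}\simeq\Dmod(\Gr_G^{\leq n})^{G_m}$ are compatible as $m$ grows for the reason just given. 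Second, the opening ``adjunction via $\Av_!^\GO$'' framing is a genuine alternative route but is not what the rest of your argument uses; the colimit argument proves t-exactness of $\oblv^\GO$ directly (right t-exactness is built in; for left t-exactness one uses that $(i_n)^!$ is left t-exact for the t-structure of Definition~\ref{def:Sph t-str} on $\Sph_G$, commutes with $\oblv^\GO$, and that $\Dmod(\Gr_G)^{\leq 0}$ is generated under colimits by closed pushforwards from the finite strata). So I would drop the left-adjoint preamble or flag it explicitly as an alternative.
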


\begin{proof}
We need to prove that $\oblv^\GO$ is left t-exact. We can exhaust $\Gr_G$ with $\GO$-invariant subschemes $Y_n$ with the following property: the $\GO$-action on $Y_n$ factors through a finite dimensional quotient group $H_n$ with pro-unipotent kernel. Define a t-structure on $\Dmod(Y_n)^\GO$ by declaring the functor \begin{equation} \label{eqn:partial-oblvGO-right}
\oblv^{\GO}: \Dmod(Y_n)^\GO \to \Dmod(Y_n)
\end{equation} 
to be right t-exact. 
Note that the fully faithful embedding $i_!: \Dmod(Y_n)^\GO \hto \Dmod(\Gr_G)^\GO$ is right t-exact, and so its right adjoint is left t-exact.
Thus, by the definition of the t-structure on $\Dmod(\Gr_G)$, it remains to prove that \eqref{eqn:partial-oblvGO-right} is left t-exact.
By pro-unipotence, we have a natural equivalence
$$
\Dmod(Y_n)^\GO 
\simeq 
\Dmod(Y_n)^{H_n}
\simeq
\Dmod(Y_n/H_n).
$$
Assertion (4) of Lemma \ref{lem:Dmod t-properties} shows that the t-structure on $\Dmod(Y_n)^\GO$ corresponds, under the above equivalence, to a shift by $\dim(H_n)$ of the usual t-structure on $\Dmod(Y_n/H_n)$. In particular, the pullback functor along $Y_n \tto Y_n/H_n$ is t-exact.
\end{proof}

\begin{lem} \label{lem:filt}
The above t-structure on $\Sph_G$ is compatible with filtered colimits.
\end{lem}

\begin{proof}
By definition, a t-structure on $\C$ is compactible with filtered colimits if $\C^{\geq 0}$ is closed under filtered colimits. We know this is the case for $\C = \Dmod(\Gr_G)$: a way to see this is to observe that $\Dmod(\Gr_G)^{\leq 0}$ is generated by objects that are compact in $\Dmod(\Gr_G)$. Then the lemma follows from the fact that $\oblv^\GO: \Sph_G \to \Dmod(\Gr_G)$ is left t-exact, continuous and conservative.
\end{proof}

\begin{rem}[Not used in what follows]
It is easy to see that $\Sph_G^{\leq - \infty}$ is generated by $\Av_!^{\GO}(\F)$ for all $\F \in \Dmod(\Gr_G)$ infinitely connective and ind-holonomic. Indeed, any such object $\Av_!^{\GO}(\F)$ is infinite connective because $\Av_!^{\GO}$ is right t-exact. On the other hand, given $\A \in \Sph_G^{\leq -\infty}$, we obtain that
$$
\Av_!^{\GO} \oblv^\GO \A
\simeq
(p_G)_!(\omega_G)\otimes \A
\simeq
H_*(G) \otimes \A
$$
with $\oblv^\GO \A$ ind-holonomic and infinitely connective. (For the ind-holonomic part: note that any object of $\Sph_G$ is ind-holonomic.)
\end{rem}

\ssec{t-good and t-excellent maps}

The following definitions will play an important role in the proof of Theorem \ref{mainthm:star}.

\begin{defn}
Let $\X$ and $\X'$ be ind-stacks, so that their DG categories of D-modules are equipped with t-structures. We say that a map $f: \X \to \X'$ is
\begin{itemize}
\item
\emph{t-good} if $f^!$ sends $\Dmod(\X')^{\leq - \infty}$ to $\Dmod(\X)^{\leq - \infty}$;
\item
\emph{t-excellent} if the D-module pullback $f^!: \Dmod(\X') \to \Dmod(\X)$ is right t-exact up to a finite shift.
\end{itemize}
\end{defn}

\begin{rem}
We can extend the context of the above definitions to the case where $\X'= \Hecke_{G,x}^\loc$. In this case $\Dmod(\X') \simeq \Sph_{G,x}$ is equipped with the t-structure of Definition \ref{def:Sph t-str}.
\end{rem}

\sssec{}
A t-excellent map is in particular t-good. 
The opposite implication is false: the map $r$ of Section \ref{sssec:r maps of Hecke} is a counterexample.
A composition of t-good (respectively, t-excellent) maps is t-good (respectively, t-excellent).

\begin{example}
Let $\Y$ be an ind-scheme whose dualizing D-module is infinitely connective and let $y \in \Y(\kk)$. 
As mentioned, relevant examples of such $\Y$ are $\AA^\infty$, $\Gr_N$, $\Gr_G$, $G[\Sigma]$ for $G$ non-abelian and $\Sigma$ a smooth affine curve. 
Then the closed embedding $i_y: \pt \hto \Y$ determined by  $y \in \Y(\kk)$ is not t-good.
On the other hand, the structure map $\Y \to \pt$ is t-excellent (as well as t-good for a trivial reason).
Actually, in this simple case the map $p_\Y^!$ sends the entire $\Vect$ into $\Dmod(\Y)^{\leq - \infty}$: this is so because the t-structure on $\Vect$ is right-complete.
\end{example}

\begin{example}
Let $\Y \to Y \leftto Z$ be a diagram where $\Y$ is an ind-scheme of ind-finite type, while $Y,Z$ are quasi-compact schemes. Then the induced map $Z \times_Y \Y \to \Y$ is t-excellent.
Indeed, this reduces to the fact that any map between schemes of finite type is t-excellent.
\end{example}

\begin{example}

We now give a non-trivial non-example. Let $G$ be a non-abelian reductive group as usual, $B$ a Borel subgroup and $N$ its unipotent radical.
We claim that the map $j: \Gr_N \to \Gr_G$ is not t-good.

Recall that $j$ is a locally closed embedding, arising as the composition of an affine open embedding $\Gr_N \hto \ol\Gr_N$ with a closed embedding $\ol\Gr_N \hto \Gr_G$. In fact, the first map is the inclusion of a divisor complement, \cite{MV, Xinwen}.
It follows that $j_*$ is t-exact.
Now consider $j^-: \Gr_{N^-} \to \Gr_G$ and the object $j^-_*(\omega_{\Gr_{N^-}}) \in \Dmod(\Gr_G)$. In view of the above facts, this object is infinitely connective. If $j: \Gr_N \to \Gr_G$ were t-good, then $j^! (j^-)_*(\omega_{\Gr_{N^-}})$ would be infinitely connective too. However, the intersection of opposite semi-infinite orbits is a scheme $Z$ of finite type, see e.g. \cite{MV}. In particular, by the example above, $\omega_Z$ is bounded in the t-structure of $\Dmod(Z)$.

\end{example}

\begin{lem} \label{lem:smooth goodness}
The property of being t-good can be checked smooth-locally on the source.
\end{lem}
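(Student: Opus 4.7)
The plan is to deduce both directions of the equivalence from the fact, recorded in \lemref{lem:Dmod t-properties}(4), that for a smooth map $p$ of relative dimension $d$ the functor $p^![-d]$ is t-exact — combined with the obvious observation that the condition of belonging to $\Dmod(-)^{\leq -\infty}$ is invariant under any cohomological shift by a constant.

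First I would fix a smooth surjective atlas $p: \wt \X \to \X$. The two properties of $p^!$ I want to extract are:
\begin{enumerate}
\item \emph{Preservation:} $p^!$ sends $\Dmod(\X)^{\leq -\infty}$ into $\Dmod(\wt\X)^{\leq -\infty}$. This is immediate from t-exactness up to the shift $[d]$.
\item \emph{Reflection:} if $p^!(\G) \in \Dmod(\wt\X)^{\leq -\infty}$, then $\G \in \Dmod(\X)^{\leq -\infty}$.
\end{enumerate}
Reflection is the substantive point. For $\X$ a locally QCA stack it follows from the definition of the t-structure on $\Dmod(\X)$ via smooth atlases together with surjectivity of $p$: if $H^{-n}(\G) \neq 0$ for some $n$, then $H^{-n-d}(p^!\G) = p^![-d](H^{-n}\G) \neq 0$ by the t-exactness and the conservativity of $p^!$ on the heart (coming from surjectivity of the atlas). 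For $\X$ an ind-stack $\colim_j \X_j$, one reduces to the stack case by pulling back $p$ along each closed embedding $\X_j \hto \X$ and checking the connectivity of $(i_j)^! \G$ on each piece; the resulting smooth covers of the $\X_j$ are exactly what detect $\Dmod(\X_j)^{\leq -\infty}$.

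With preservation and reflection in hand, both implications of the lemma are formal. For the only-if direction, assume $f$ is t-good and let $\F \in \Dmod(\X')^{\leq -\infty}$; then $f^!(\F)$ is infinitely connective, and preservation gives $(f \circ p)^!(\F) \simeq p^! f^!(\F) \in \Dmod(\wt\X)^{\leq -\infty}$, so $f \circ p$ is t-good. For the if direction, assume $f \circ p$ is t-good and let $\F \in \Dmod(\X')^{\leq -\infty}$; then $p^! f^!(\F) \simeq (f \circ p)^!(\F)$ is infinitely connective, and reflection yields $f^!(\F) \in \Dmod(\X)^{\leq -\infty}$.

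The only real obstacle is the reflection statement for ind-stacks, i.e. verifying that $p^!$ detects membership in $\Dmod(\X)^{\leq -\infty}$; once the t-structure on $\Dmod(\X)$ is unwound in terms of the t-structures on its defining closed substacks $\X_j$, this is a straightforward application of the corresponding assertion for stacks, which itself reduces to the well-known fact that smooth-surjective $!$-pullback is conservative and t-exact up to a shift. Nothing in the argument uses any property of $\X'$, so the statement applies equally well in the extended setting where $\X' = \Hecke_{G,x}^\loc$.
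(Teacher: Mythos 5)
Your plan has the right structural shape: the ``only if'' direction (precomposition by a smooth map preserves t-goodness) follows from t-excellence of smooth maps exactly as in the paper, and the ``if'' direction would indeed follow from the ``reflection'' statement you isolate, namely that $\pi^!$ detects infinite connectivity for a smooth cover $\pi$. The paper, however, does not try to prove reflection as a standalone statement about cohomology; it proves the ``if'' direction directly by smooth descent, writing
\[
\HHom_{\Dmod(\X)}\bigl(f^!(\F),\,\G\bigr)
\;\simeq\;
\lim_{[n]\in\bDelta}\,
\HHom_{\Dmod(\U_n)}\bigl((\pi_n)^!f^!(\F),\,(\pi_n)^!\G\bigr)
\]
for $\G$ eventually coconnective, observing that each term lies in $\Vect^{>0}$ (using the already-established ``only if'' direction to see that $f\circ\pi_n$ is t-good), and concluding that the totalization does too.

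Your proof of the reflection step, by contrast, has two genuine gaps. First, the cohomology argument (``if $H^{-n}(\G)\neq 0$ then $H^{-n-d}(p^!\G)\neq 0$'') proves the contrapositive of the wrong implication: it shows that nonzero cohomology of $\G$ forces nonzero cohomology of $p^!\G$, but to conclude $\G\in\Dmod(\X)^{\leq-\infty}$ from $p^!\G\in\Dmod(\wt\X)^{\leq-\infty}$ you need to know that an object which is \emph{not} infinitely connective has some nonzero $H^{-n}$. That requires a right-completeness property of the t-structure (a nonzero coconnective object must have nonzero cohomology), which you neither state nor verify, and which is not obvious for ind-stacks with the colimit-defined t-structure. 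Second, the reduction to the stack case via the pieces $\X_j$ is not valid as stated: the functors $(i_j)^!$ are right adjoints to the t-exact $(i_j)_*$, hence only \emph{left} t-exact, so they detect coconnectivity but not connectivity. In particular ``$\G$ infinitely connective iff $(i_j)^!\G$ infinitely connective for all $j$'' fails in the direction you need, and the claim that the smooth covers of the $\X_j$ ``are exactly what detect $\Dmod(\X_j)^{\leq-\infty}$'' does not bootstrap to $\Dmod(\X)^{\leq-\infty}$. The paper's descent computation sidesteps both issues by never reasoning about cohomology objects or about the pieces $\X_j$ at all; I would replace your reflection argument with that computation.
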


\begin{proof}
Let $g: \X \to \Y$ be a t-good map and $\rho: \U \to \X$ a smooth map. Since $\rho$ is in particular t-excellent by Lemma \ref{lem:Dmod t-properties}, we obtain that $\rho \circ g: \U \to \Y$ is t-good.
For the other direction, suppose that $\pi: \U \to \X$ is a smooth cover and that $\U \xto{\pi} \X \xto{f} \Y$ is t-good: we wish to show that $f$ is t-good.
To this end, take $\F \in \Dmod(\Y)^{\leq - \infty}$ and $\G \in \Dmod(\X)^{\geq m}$. Since $m \in \bbZ$ is arbitrary, it suffices to show that
$$
\HHom_{\Dmod(\X)}(f^!(\F), \G) \in \Vect^{>0}.
$$
By smooth descent for D-modules, we have
$$
\HHom_{\Dmod(\X)}(f^!(\F), \G)
\simeq
\lim_{[n] \in \bDelta}
\HHom_{\Dmod(\U_n)}
\Bigt{
( \pi_n) ^! f^!(\F),(\pi_n)^!\G
},
$$
where the $\pi_n$ are the structure maps of the Cech resolution. Now we observe that $f \circ \pi_n$ is t-good by assumption and so $(\pi_n) ^! f^!(\F)$ is infinitely connective. On the other hand, $(\pi_n)^!\G$ is eventually coconnective (this is because $\pi_n$ is t-excellent and $\G$ is eventually coconnective).
Hence $\HHom_{\Dmod(\X)}(f^!(\F), \G)$ is a totalization of vector spaces belonging to $\Vect^{>0}$, so it is itself in $\Vect^{>0}$.
\end{proof}

\begin{lem}
Consider a fiber square
\begin{equation} 
\nonumber
\begin{tikzpicture}[scale=1.5]
\node (00) at (0,0) {$ \Y$};
\node (10) at (1,0) {$ \Z$};
\node (01) at (0,.8) {$ \W$};
\node (11) at (1,.8) {$\X$};
\path[->,font=\scriptsize,>=angle 90]
(00.east) edge node[above] {$f$}  (10.west); 
\path[->,font=\scriptsize,>=angle 90]
(01.east) edge node[above] {$f'$} (11.west); 
\path[right hook ->,font=\scriptsize,>=angle 90]
(01.south) edge node[right] {$i'$} (00.north);
\path[right hook ->,font=\scriptsize,>=angle 90]
(11.south) edge node[right] {$i$} (10.north);
\end{tikzpicture}
\end{equation}
with $i$ a closed embedding or an affine open embedding.
If $f$ a t-good, then so is $f'$. 
\end{lem}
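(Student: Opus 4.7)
The plan is to combine base change along the Cartesian square with the observation that, in both cases considered, the vertical maps $i$ and $i'$ have pushforwards that are t-exact and fully faithful, and therefore preserve and reflect infinite connectivity.

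First I would recall that since the square is Cartesian, base change yields a canonical isomorphism
\[
f^! \circ i_* \;\simeq\; i'_* \circ (f')^!.
\]
(For $i$ a closed embedding this is standard; for $i$ an affine open embedding it follows from the smooth base-change / open-restriction compatibility, noting that $i'$ inherits the property of being a closed or affine open embedding from $i$.) Next I would observe that $i_*$ is t-exact: for $i$ a closed embedding this is part (2) of \lemref{lem:Dmod t-properties}; for $i$ an affine open embedding, $i^! = i^*$ is t-exact by part (1) and $i_*$ is right t-exact by part (5), so $i_*$ is t-exact. In particular $i_*$ sends $\Dmod(\X)^{\leq -\infty}$ to $\Dmod(\Z)^{\leq -\infty}$, and similarly $i'_*$ is t-exact.

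Now take $\F \in \Dmod(\X)^{\leq -\infty}$. By the above, $i_*(\F) \in \Dmod(\Z)^{\leq -\infty}$. Since $f$ is t-good by hypothesis, $f^! i_*(\F) \in \Dmod(\Y)^{\leq -\infty}$. By base change, this means $i'_*(f')^!(\F) \in \Dmod(\Y)^{\leq -\infty}$. To conclude that $(f')^!(\F) \in \Dmod(\W)^{\leq -\infty}$, I would use that $i'_*$ is fully faithful (for a closed embedding by standard properties of $\Dmod$; for an affine open embedding because $(i')^* (i')_* \simeq \mathsf{id}$), hence conservative, hence reflects the t-structure: as $i'_*$ is t-exact and conservative, for each $n \in \ZZ$ the object $i'_*\G \in \Dmod(\Y)^{\leq -n}$ forces $\G \in \Dmod(\W)^{\leq -n}$ (apply $i'_*$ to $\tau^{>-n}\G$ and use t-exactness plus conservativity). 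Applying this for all $n$ to $\G = (f')^!(\F)$ gives the conclusion.

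I do not expect serious obstacles here; the only point that requires a small justification is the verification of base change in the two cases (closed versus affine open embedding), together with the full faithfulness of $i_*$ and $i'_*$ in the affine open case. Both are routine, but worth flagging since our t-structure is defined via the filtered colimit presentation when $\X$ or $\W$ is an ind-stack, so one should check base change at the level of the defining closed-embedding presentations and pass to the colimit.
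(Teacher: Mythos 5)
Your proof is correct and takes essentially the same approach as the paper: both combine base change along the Cartesian square with the observation that $i_*$ and $i'_*$ are t-exact and conservative (being fully faithful), hence reflect infinite connectivity. The paper's version is terser, simply asserting these properties, whereas you supply the supporting details — in particular the derivation of t-exactness of $i_*$ in the affine open case from the $(i^*,i_*)$ adjunction together with parts (1) and (5) of Lemma~\ref{lem:Dmod t-properties}, and the explicit argument (via $\tau^{>-n}$ and conservativity) that a t-exact conservative functor reflects membership in $\Dmod(\W)^{\leq -\infty}$.
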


\begin{proof}
Since $i_*$ is t-exact and conservative (in fact, fully faithful), an object $\F \in \Dmod(\X)$ is infinitely connective if and only if so is $i_*(\F)$. Of course, the same holds true for $(i')_*$. Then the assertion follows immediately from base-change.
\end{proof}

\ssec{Derived Satake} \label{ssec:derived-satake}

Now we recall the derived Satake equivalence. We will observe that this is one instance (perhaps the only instance?) of Langlands duality where infinitely connective objects and anti-tempered objects coincide on the automorphic side.

\begin{thm} [Derived Satake theorem \cite{BF, AG1}] \label{thm:der Satake}
There is a monoidal equivalence
\begin{equation} \label{eqn:derived Sat}
\Sat_G:
\ICoh_{\Nch}((\pt \times_\gch \pt )/\Gch)
\xto{\;\; \simeq \;\;}
\Sph_G.
\end{equation}
\end{thm}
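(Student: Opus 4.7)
The statement being recalled is the derived Satake equivalence of Bezrukavnikov--Finkelberg and Arinkin--Gaitsgory, so the plan is to reconstruct their strategy rather than produce anything new. The starting point is Mirković--Vilonen's classical geometric Satake, which provides a symmetric monoidal equivalence $\Rep(\Gch) \simeq (\Sph_G)^\heart_{\on{sph}}$ between the abelian category of $\Gch$-representations and the heart of the renormalized spherical category (with respect to the perverse t-structure). The goal is to bootstrap this heart-level equivalence into the full derived statement by controlling Ext-algebras on both sides and invoking a formality/Koszul-duality mechanism.

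First, I would compute, on the geometric side, the graded algebra $\Ext^*_{\Sph_G}(\IC_0, \IC_0)$, where $\IC_0 \in \Sph_G$ is the unit (the $\delta$-function on the point stratum $\GO/\GO \subset \Gr_G$). The key input is that this algebra is canonically isomorphic, as a $\Gch$-equivariant graded algebra, to $\Sym(\gch^*[-2])$, and more strongly that it matches the functions on the derived self-intersection $\pt \times_\gch \pt$ (placed at the appropriate shift), modulo the passage from cohomological to Koszul-dual conventions. On the spectral side, the corresponding Ext computation on $\ICoh((\pt \times_\gch \pt)/\Gch)$ is a direct calculation from the Koszul resolution. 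Second, I would establish the formality of $\on{REnd}_{\Sph_G}(\IC_0)$, using the Tate twist / $\GG_m$-action built into the construction (at this point the $\kk$-linearity and characteristic zero play an essential role). Formality together with the MV identification of the abelian hearts gives a monoidal equivalence between a bounded derived subcategory of $\Sph_G$ and a bounded derived subcategory of $\ICoh((\pt \times_\gch \pt)/\Gch)$.

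Third, I would ind-complete this equivalence and verify that the renormalization performed on the geometric side to obtain $\Sph_G$ corresponds exactly to imposing the nilpotent singular support condition $\Nch$ on the spectral side. This is the step where one has to be most careful: the point is that $\ICoh_{\Nch}((\pt \times_\gch \pt)/\Gch) \subseteq \ICoh((\pt \times_\gch \pt)/\Gch)$ picks out precisely those ind-coherent sheaves whose singular support lies in the nilpotent cone, and the compact generators on this side are Koszul-dual to the compact generators of $\Sph_G$ (whose compactness is that of the renormalized category, not of constructible sheaves).

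The hardest step is formality together with matching monoidal structures, not just the underlying DG category equivalence. The monoidality requires upgrading $\IC_0$ to an $E_\infty$-algebra object in the Hecke category and matching it with the structure sheaf of $(\pt \times_\gch \pt)/\Gch$; verifying that the two $E_\infty$-structures agree is exactly where the BF argument uses the full strength of the commutativity constraint provided by the fusion product on $\Gr_G$ over the Ran space. I would therefore structure the argument so that, after the Ext and formality computations, the monoidal comparison is reduced to comparing fusion on the geometric side with the obvious multiplication on the spectral side under the MV equivalence, which is already handled in \cite{BF}. The singular-support refinement is then a formal consequence, along the lines of \cite{AG1}.
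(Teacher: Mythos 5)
This is a cited theorem: the paper imports derived Satake from \cite{BF} and \cite{AG1} and does not reprove it. The only thing the paper does itself (in the proposition immediately following) is record the compatibility of $\Sat_G$ with the underived Mirkovi\'c--Vilonen equivalence on hearts, which it then uses to show $\Sat_G$ is t-exact. So there is no proof in the paper for your proposal to be compared against; you are reconstructing the argument of the cited sources.

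As a sketch of the Bezrukavnikov--Finkelberg / Arinkin--Gaitsgory strategy, your outline is broadly right: MV Satake on hearts, Ext/formality input via the weight ($\GG_m$-equivariant) structure in characteristic zero, monoidal comparison via fusion over the Ran space, and the singular-support refinement to $\Nch$ from \cite{AG1}. One substantive inaccuracy worth correcting: the self-Ext algebra of the unit is \emph{not} $\Sym(\gch^*[-2])$ with a nontrivial $\Gch$-equivariant structure. One has
$$
\Ext^*_{\Sph_G}(\IC_0,\IC_0) \simeq H^*_{\GO}(\pt) \simeq \Sym(\ft^*[-2])^W \simeq \kk[\gch]^{\Gch}[-2\bullet],
$$
i.e., functions on the Chevalley space, placed in even degrees. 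The object whose endomorphism algebra actually recovers the $\Gch$-equivariant picture in \cite{BF} is not $\IC_0$ but the larger compact generator corresponding under MV Satake to the regular representation of $\Gch$ (or, on the spectral side, the pushforward of $\O$ along $\pt/\Gch \to \Omega\gch/\Gch$); the formality argument is run for that End-algebra, and equivariance is then reconstructed from the $\Rep(\Gch)$-module structure. If you revise the Ext step accordingly, the rest of your outline matches the cited proof.
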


\begin{rem}
To save space, we set $\Omega \gch := \pt \times_\gch \pt$. For more information on this theorem, we refer the reader to  \cite{BF, AG1}, as well as to our discussion in \cite{omega}. 
\end{rem}

\sssec{}
 
Both sides of \eqref{eqn:derived Sat} are equipped with t-structures. On $\ICoh_{\Nch}((\pt \times_\gch \pt )/\Gch)$, we have the t-structure discussed in Section \ref{sssec:icoh t-str}: this applies since $\Omega \gch/\Gch \simeq \LSGch(\PP^1)$ is a quasi-smooth derived stack.
On $\Sph_G$, we use the t-structure of Definition \ref{def:Sph t-str}.

\begin{prop}
The equivalence $\Sat_G: \ICoh_{\Nch}(\Omega \gch/\Gch) \to
\Sph_G$ is t-exact.
\end{prop}

\begin{proof}
The crucial piece of structure we exploit is the compatibility of derived Satake with the usual (i.e., underived) geometric Satake equivalence. First of all, the underived Satake theorem (\citep{MV}) provides an equivalence
$$
\Sat'_G:
\Rep(\Gch)^\heartsuit
\xto{\;\;\simeq\;\;}
\Sph_G^{\heartsuit}
$$
of abelian categories. Next, observe that $\Rep(\Gch)^\heartsuit
\simeq \ICoh_{\Nch}(\Omega \gch/\Gch)^\heartsuit$ canonically.
Then the mentioned compatibility between $\Sat'_G$ and $\Sat_G$ amounts to the datum of a commutative diagram
\begin{equation} 
\nonumber
\begin{tikzpicture}[scale=1.5]
\node (00) at (0,0) {$ \ICoh_{\Nch}(\Omega \gch/\Gch)$};
\node (10) at (2,0) {$ \Sph_G$,};
\node (01) at (0,1) {$ \Rep(\Gch)^\heartsuit $};
\node (11) at (2,1) {$\Sph_G^\heartsuit$};
\path[->,font=\scriptsize,>=angle 90]
(00.east) edge node[above] {$\Sat_G$}  (10.west); 
\path[->,font=\scriptsize,>=angle 90]
(01.east) edge node[above] {$\Sat'_G$} (11.west); 
\path[right hook ->,font=\scriptsize,>=angle 90]
(01.south) edge node[right] {$ $} (00.north);
\path[right hook ->,font=\scriptsize,>=angle 90]
(11.south) edge node[right] {$ $} (10.north);
io\end{tikzpicture}
\end{equation}
with the right vertical arrows being the tautological embeddings. Alternatively, the left vertical arrow can be realized as the composition of $\Rep(\Gch)^\heartsuit \subseteq \Rep(\Gch)$ with the functor
$$
\xi:
\Rep(\Gch) 
\xto{\Delta_*^\ICoh}
\ICoh( \Omega \gch /\Gch)
\xto{\Psi_{\Nch \to \all}}
\ICoh_{\Nch}( \Omega \gch /\Gch).
$$
Here $\Delta: \pt/\Gch \hto \Omega \gch /\Gch$ is the natural closed embedding.
Now the claimed t-exactness can be proven by devissage, that is, by combining the following facts: $\xi$ is a t-exact functor whose essential image generates the target under colimits, the t-structure on $\ICoh_{\Nch}( \Omega \gch /\Gch)$ is right-complete, the t-structure on $\Sph_G$ is compatible with filtered colimits (as shown in Lemma \ref{lem:filt}). 
\end{proof}

\sssec{}

Using Lemma \ref{lem:ker(Psi)}, we obtain the following exact sequence of DG categories:
\begin{equation} \label{eqn:exact-seq Sat-spec}
\begin{tikzpicture}[scale=1.5]
\node (a) at (0,1) {$\QCoh(\Omega \gch/\Gch)$};
\node (b) at (2.7,1) {$\ICoh_{\Nch}(\Omega \gch/\Gch)$};
\node (c) at (5.4,1) {$\ICoh_{\Nch}(\Omega \gch/\Gch)^{\leq - \infty}$,};
\path[right hook ->,font=\scriptsize,>=angle 90]
([yshift= 1.5pt]a.east) edge node[above] {$\Xi_{0 \to \Nch}$} ([yshift= 1.5pt]b.west);
\path[->>,font=\scriptsize,>=angle 90]
([yshift= -1.5pt]b.west) edge node[below] {$\Psi_{0 \to \Nch}$} ([yshift= -1.5pt]a.east);
\path[->>,font=\scriptsize,>=angle 90]
([yshift= 1.5pt]b.east) edge node[above] {$\pi$} ([yshift= 1.5pt]c.west);
\path[right hook ->,font=\scriptsize,>=angle 90]
([yshift= -1.5pt]c.west) edge node[below] {$\iota$} ([yshift= -1.5pt]b.east);
\end{tikzpicture}
\end{equation}
where $\iota$ is the natural inclusion of the full subcategory of infinitely connective objects and $\pi$ is the natural projection given by the formula $\F \mapsto \coker (\Xi \Psi \to \id)$.

\sssec{}

Now, we \emph{define} the exact sequence
\begin{equation} \label{eqn:exact-seq Sat-aut}
\begin{tikzpicture}[scale=1.5]
\node (a) at (0,1) {$\Sph_G^\temp$};
\node (b) at (1.8,1) {$\Sph_G$};
\node (c) at (4,1) {$ \Sph_G^{\antitemp}$};
\path[right hook ->,font=\scriptsize,>=angle 90]
([yshift= 1.5pt]a.east) edge node[above] {$ $} ([yshift= 1.5pt]b.west);
\path[->>,font=\scriptsize,>=angle 90]
([yshift= -1.5pt]b.west) edge node[below] {$\temp$} ([yshift= -1.5pt]a.east);
\path[->>,font=\scriptsize,>=angle 90]
([yshift= 1.5pt]b.east) edge node[above] {$\antitemp$} ([yshift= 1.5pt]c.west);
\path[right hook ->,font=\scriptsize,>=angle 90]
([yshift= -1.5pt]c.west) edge node[below] {$ $} ([yshift= -1.5pt]b.east);
\end{tikzpicture}
\end{equation}
to be the one corresponding to \eqref{eqn:exact-seq Sat-spec} under derived Satake.
Since the equivalence $\Sat_G$ is t-exact, we obtain:

\begin{cor} \label{cor:antitemp=t-nil}
The two full subcategories $\Sph_G^\antitemp \subseteq \Sph_G$ and $\Sph_G^{\leq - \infty} \subseteq \Sph_G$ agree.
\end{cor}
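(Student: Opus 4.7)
The plan is to deduce this corollary as an almost immediate consequence of the $t$-exactness of $\Sat_G$ together with the description of infinitely connective objects in $\ICoh_{\Nch}(\Omega\gch/\Gch)$ provided by \lemref{lem:ker(Psi)}.

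First, I would unpack the definition: by construction, the exact sequence \eqref{eqn:exact-seq Sat-aut} is the image under $\Sat_G$ of the exact sequence \eqref{eqn:exact-seq Sat-spec}. In particular, $\Sph_G^{\antitemp}$ is by definition the essential image under $\Sat_G$ of $\ICoh_{\Nch}(\Omega\gch/\Gch)^{\leq -\infty}$, which by \lemref{lem:ker(Psi)} equals $\ker(\Psi_{0\to \Nch})$.

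Next, since $\Sat_G$ is a $t$-exact equivalence of DG categories, it induces an equivalence between the corresponding subcategories of infinitely connective objects. That is, it restricts to an equivalence
$$
\Sat_G: \ICoh_{\Nch}(\Omega\gch/\Gch)^{\leq - \infty} \xto{\;\;\simeq\;\;} \Sph_G^{\leq -\infty}.
$$
Combining the two identifications yields $\Sph_G^{\antitemp} = \Sph_G^{\leq -\infty}$ as full subcategories of $\Sph_G$.

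There is no real obstacle here; the proof is essentially tautological once the preceding proposition and \lemref{lem:ker(Psi)} are in place. The only thing to double-check is that the transport in \eqref{eqn:exact-seq Sat-aut} really is defined so that $\Sph_G^{\antitemp}$ corresponds to $\ICoh_{\Nch}(\Omega\gch/\Gch)^{\leq -\infty}$ (rather than to, say, $\QCoh(\Omega\gch/\Gch)$), which is immediate from the fact that $\antitemp$ lands on the \emph{right} of the sequence, matching the infinitely connective piece on the spectral side.
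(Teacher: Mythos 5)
Your argument is correct and is essentially the paper's own: the sequence \eqref{eqn:exact-seq Sat-aut} is \emph{defined} by transporting \eqref{eqn:exact-seq Sat-spec} along $\Sat_G$, so $\Sph_G^{\antitemp}$ is tautologically the image of $\ICoh_{\Nch}(\Omega\gch/\Gch)^{\leq-\infty} = \ker(\Psi_{0\to\Nch})$, and $t$-exactness of the equivalence $\Sat_G$ identifies that image with $\Sph_G^{\leq-\infty}$. You merely spell out the small standard point that a $t$-exact equivalence has $t$-exact inverse (and so matches the $\leq-\infty$ subcategories on both sides), which the paper leaves implicit.
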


\ssec{Tempered and anti-tempered objects} \label{ssec:tempered-objects}

\sssec{}

Now let $\C$ be a DG category equipped with an action of $\Sph_G$. 
We set:
$$
\C^\temp
:= \Sph_G^{\temp} \usotimes{\Sph_G} \C
$$
$$
\C^\antitemp
:= \Sph_G^{\antitemp} \usotimes{\Sph_G} \C.
$$
Since the four functors appearing in \eqref{eqn:exact-seq Sat-aut} are all $\Sph_G$-linear, we can tensor \eqref{eqn:exact-seq Sat-aut} with $\C$ over $\Sph_G$ and obtain a new exact sequence 
\begin{equation} \label{eqn:exact-seq Sat-C}
\begin{tikzpicture}[scale=1.5]
\node (a) at (0,1) {$\C^\temp$};
\node (b) at (1.4,1) {$\C$};
\node (c) at (3.2,1) {$\C^\antitemp$.};
\path[right hook ->,font=\scriptsize,>=angle 90]
([yshift= 1.5pt]a.east) edge node[above] {$ $} ([yshift= 1.5pt]b.west);
\path[->>,font=\scriptsize,>=angle 90]
([yshift= -1.5pt]b.west) edge node[below] {$\temp$} ([yshift= -1.5pt]a.east);
\path[->>,font=\scriptsize,>=angle 90]
([yshift= 1.5pt]b.east) edge node[above] {$\antitemp$} ([yshift= 1.5pt]c.west);
\path[right hook ->,font=\scriptsize,>=angle 90]
([yshift= -1.5pt]c.west) edge node[below] {$ $} ([yshift= -1.5pt]b.east);
\end{tikzpicture}
\end{equation}
So, $\C^\temp$ and $\C^{\antitemp}$ are full subcategories of $\C$, with the former left orthogonal to the latter. Accordingly, an object $c \in \C$ is tempered if and only if the full subcategory $\Sph_G^{\antitemp} \subseteq \Sph_G$ acts on $c$ by zero. Thanks to Corollary \ref{cor:antitemp=t-nil}, we can replace $\Sph_G^{\antitemp}$ with $\Sph_G^{\leq - \infty}$:

\begin{cor} \label{cor:key}
An object $c \in \C$ is \emph{tempered} if
$\A \star c \simeq 0$ for any $\A \in \Sph_G^{\leq - \infty}$.
\end{cor}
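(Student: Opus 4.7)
The plan is to read Corollary \ref{cor:key} as an immediate consequence of two things already established: the exact sequence \eqref{eqn:exact-seq Sat-C} that characterizes tempered objects via the convolution action of the anti-tempered part of $\Sph_G$, and Corollary \ref{cor:antitemp=t-nil} that identifies $\Sph_G^{\antitemp}$ with the infinitely-connective subcategory $\Sph_G^{\leq -\infty}$. So there is no new input required beyond unpacking definitions.

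First, I would extract from \eqref{eqn:exact-seq Sat-C} the fact that $\C^\temp \subseteq \C$ is the left orthogonal of $\C^\antitemp$. Concretely, the composition $\C \xto{\antitemp} \C^\antitemp$ is a localization with kernel $\C^\temp$, so an object $c \in \C$ lies in $\C^\temp$ if and only if $\antitemp(c) \simeq 0$. Since $\C^\antitemp := \Sph_G^\antitemp \otimes_{\Sph_G} \C$ is obtained from $\C$ by convolving with the anti-tempered part of the spherical category, this vanishing is equivalent to the statement that $\A \star c \simeq 0$ for every $\A \in \Sph_G^\antitemp$. (This last equivalence uses that $\Sph_G^\antitemp$ is generated under colimits by its objects and that convolution commutes with colimits, so vanishing against the whole subcategory is detected pointwise.)

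Second, I would appeal to Corollary \ref{cor:antitemp=t-nil}, which gives the identification $\Sph_G^{\antitemp} = \Sph_G^{\leq -\infty}$ as full subcategories of $\Sph_G$. Substituting this into the characterization of the previous paragraph yields the desired criterion: $c$ is tempered iff $\A \star c \simeq 0$ for every $\A \in \Sph_G^{\leq -\infty}$.

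There is essentially no obstacle at this stage; the genuinely non-trivial work already happened in proving t-exactness of $\Sat_G$ (which transports \eqref{eqn:exact-seq Sat-spec} into \eqref{eqn:exact-seq Sat-aut} in a t-exact manner) and in applying Lemma \ref{lem:ker(Psi)} to identify the kernel of $\Psi_{0 \to \Nch}$ with infinitely connective objects. Once those inputs are available, Corollary \ref{cor:key} is a formal consequence, and the only thing worth double-checking in writing it up is that the convolution action's vanishing on a generating family of $\Sph_G^{\antitemp}$ is equivalent to its vanishing on all of $\Sph_G^{\leq -\infty}$, which is automatic from cocontinuity of the action.
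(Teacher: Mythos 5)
Your proposal is correct and follows exactly the route the paper takes: the paper states Corollary~\ref{cor:key} with no separate proof, deriving it immediately from the exact sequence~\eqref{eqn:exact-seq Sat-C} (which shows $\C^\temp$ is the left orthogonal of $\C^\antitemp$, so $c$ is tempered iff $\Sph_G^\antitemp$ kills $c$) combined with Corollary~\ref{cor:antitemp=t-nil} (the substitution $\Sph_G^\antitemp = \Sph_G^{\leq -\infty}$). Your write-up just makes the implicit step slightly more explicit; if you keep the parenthetical justification for the equivalence between $\antitemp(c)\simeq 0$ and $\A \star c \simeq 0$ for all $\A$, it would be cleaner to observe directly that $\antitemp(c) \simeq \bbone_{\Sph_G}^{\antitemp}\star c$ and that $\bbone_{\Sph_G}^{\antitemp}$ generates $\Sph_G^\antitemp$ as a $\Sph_G$-module, rather than invoking cocontinuity in a vague way.
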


\sssec{}

Now recall the Hecke action of $\Sph_G$ on $\Dmod(\Bun_G)$: this is defined after fixing a point $x \in X$. We sometimes denote the Hecke action at $x$ by $\star_x$, and for emphasis we sometimes write $\Sph_{G,x}$ in place of $\Sph_G$.
The following statement, which we might take as a definition, is the link between tempered D-modules on $\Bun_G$ and infinitely connective objects that we were alluding to in the introduction. 

\begin{cor} \label{cor:key BunG}
An object $\F \in \Dmod(\Bun_G)$ is \emph{tempered} if
$\A \star_x \F \simeq 0$ for any $x \in X$ and any $\A \in \Sph_G^{\leq - \infty}$.
\end{cor}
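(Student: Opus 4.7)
The plan is to deduce this statement directly from \corref{cor:key}, once the correct specialization is in place. Concretely, I would fix a point $x \in X$ and regard $\Dmod(\Bun_G)$ as acted on by $\Sph_{G,x}$ via the Hecke action $\star_x$. Then I apply \corref{cor:key} to the pair $(\C, \Sph_G) = (\Dmod(\Bun_G), \Sph_{G,x})$: it yields that $\F$ is $x$-tempered (meaning tempered with respect to the $\Sph_{G,x}$-action at the chosen point $x$) if and only if $\A \star_x \F \simeq 0$ for every $\A \in \Sph_{G,x}^{\leq -\infty}$. The equivalence of $\Sph_G^\temp$ with the objects killed by $\Sph_G^{\leq -\infty}$ is the content of \corref{cor:antitemp=t-nil} combined with the orthogonality in the exact sequence \eqref{eqn:exact-seq Sat-C}, so no further work is needed on the spherical side.

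It then remains to verify that the notion of temperedness we obtain does not depend on the choice of $x \in X$. For this I would invoke the Remark appearing in the introduction (citing \cite{shvcatHH}): the full subcategory $\Dmod(\Bun_G)^\temp$ is intrinsic and coincides with the $x$-tempered subcategory for every $x$. Putting the two pieces together gives the \virg{if} direction stated: if $\A \star_x \F \simeq 0$ for every $x \in X$ and every $\A \in \Sph_{G,x}^{\leq -\infty}$, then in particular this holds at some fixed $x$, which by \corref{cor:key} forces $\F$ to be $x$-tempered, i.e.\ tempered.

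Since the argument is really a direct specialization of \corref{cor:key}, I do not expect any serious obstacle; the only mild point to be careful about is keeping track of the distinction between $x$-temperedness (defined via a single Hecke action) and the intrinsic notion of temperedness, and making explicit that Corollary~\ref{cor:key} is being applied point-by-point. If one wanted to avoid invoking independence of $x$ altogether, one could simply take \corref{cor:key BunG} as the definition of tempered for $\F \in \Dmod(\Bun_G)$ via the chosen point $x$, which is in any case how temperedness is used throughout the rest of the paper.
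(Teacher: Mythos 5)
Your proposal is correct and is essentially the same argument the paper has in mind: \corref{cor:key BunG} is stated without proof precisely because it is the specialization of \corref{cor:key} to $\C = \Dmod(\Bun_G)$ with the Hecke $\Sph_{G,x}$-action, and the paper even flags that one "might take it as a definition." Your extra care about the distinction between $x$-temperedness and intrinsic temperedness, and the appeal to the remark quoting \cite{shvcatHH}, is a welcome clarification of a point the paper itself glosses over (the hypothesis at a single $x$ already suffices); no gaps.
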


\ssec{Spherical-Whittaker objects}

Here we express tempered objects in terms of Whittaker invariants. This characterization will not be used anywhere in the paper.
We will exhibit a collection of \emph{compact} generators of $\Sph_G^\temp$: these are the \emph{spherical-Whittaker objects}, defined below.

\sssec{} \label{sssec:notation Whit}

To fix the notations and our conventions on left and right actions, let $\Gr_G := \GO \backsl \GK$. This way, we see that $\Sph_G$ acts on $\Dmod(\Gr_G)$, while $\GK$ acts from the right.
In particular, $\NK$ acts on the right and we consider the Whittaker DG category $\Dmod(\Gr_G)^{\Whit}$.  Since the left $\Sph_G$-action on $\Dmod(\Gr_G)$ is compatible with the right $\GK$-action, it is clear that $\Dmod(\Gr_G)^{\Whit}$ retains the left action of $\Sph_G$.

\begin{rem}

To work with Whittaker invariants, we need some familiarity wiht the formalism of (loop) group actions on DG categories, see \cite{thesis} for an introduction to the theory. Many further advances and applications can be found in \cite{Sam, Winter}.

\end{rem}

\sssec{}

Any point $g \in \GK$ yields a point $[g]:= \GO g \in \Gr_G$. In particular, any coweight $\lambda \in \Lambda$ yields a point $[t^\lambda] \in \Gr_G$. 
The most important object $\W_0 \in \Dmod(\Gr_G)^{\Whit}$ is obtained by Whittaker-averaging the delta D-module at the unit point of $\Gr_G$. In formulas,
$$
\W_0 := \Av_!^{\Whit}(\delta_{[t^0]}) \in \Dmod(\Gr_G)^{\Whit}.
$$

\sssec{}

The \emph{geometric Casselman-Shalika formula} of \cite{FGV} shows that there is an equivalence
\begin{equation} \label{eqn:Cass-Shalika}
\Rep(\Gch) \simeq \Dmod(\Gr_G)^{\Whit},
\end{equation}
uniquely characterized by the following properties:
\begin{itemize}
\item
the trivial $\Gch$-representation goes to $\W_0$;
\item
the equivalence is compatible with derived Satake and the actions of $\ICoh_{\Nch}(\Omega \gch/\Gch)$ and $\Sph_G$ on the two sides.
\end{itemize}

\sssec{}

Following the same idea, consider the $\Sph_G$-linear functor
$$
\act_{\W_0}:
\Sph_G 
\longto
\Dmod(\Gr_G)^{\Whit}
$$
defined by the action of $\Sph_G$ on $\W_0 \in \Dmod(\Gr_G)^{\Whit}$.

\sssec{}

Let us determine the spectral counterpart of $\act_{\W_0}$ under derived Satake and \eqref{eqn:Cass-Shalika}. This is the unique functor $\ICoh_\N(\Omega \gch/\Gch)$-linear functor
$$
\ICoh_\N(\Omega \gch/\Gch)
\longto
\ICoh(\pt/\Gch) \simeq \Rep(\Gch)
$$
that sends the unit of $\ICoh_\N(\Omega \gch/\Gch)$ to the trivial $\Gch$-representation $\kk_0 \in \Rep(\Gch)$.
Denote by $\pi: \Omega \gch /\Gch \to \pt/\Gch$ is the obvious (proper) projection. Tautologically, the functor in question can be written as
\begin{equation} \label{eqn:a spectral functor}
 \pi_*^\ICoh \circ \Xi_{\Nch \to \all}
:
\ICoh_\N(\Omega \gch/\Gch)
\longto
\ICoh(\pt/\Gch) \simeq \Rep(\Gch)
\end{equation}
and alternatively as $\pi_* \circ \Psi_{0 \to \Nch}$.
This latter description implies:
\begin{cor}
Any object of $\Dmod(\Gr_G)^{\Whit}$ is tempered.
\end{cor}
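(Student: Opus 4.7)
The plan is to deduce the corollary almost immediately from the spectral description of $\act_{\W_0}$ just established. The crucial ingredient is Lemma \ref{lem:ker(Psi)}, which identifies $\ICoh_{\Nch}(\Omega \gch/\Gch)^{\leq -\infty}$ with $\ker(\Psi_{0 \to \Nch})$. Combining this with the description $\act_{\W_0} \simeq \pi_* \circ \Psi_{0 \to \Nch}$ (transported across derived Satake, which is t-exact) and with Corollary \ref{cor:antitemp=t-nil}, I conclude that $\act_{\W_0}$ vanishes on the full subcategory $\Sph_G^\antitemp$. In view of Corollary \ref{cor:key}, this says exactly that $\W_0 = \act_{\W_0}(\uno)$ is tempered.

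To upgrade the conclusion from $\W_0$ to every object, I would invoke two formal properties of the tempered subcategory. First, $\Dmod(\Gr_G)^{\Whit,\temp}$ is closed under colimits in $\Dmod(\Gr_G)^{\Whit}$: indeed, the inclusion is obtained as the left adjoint coming from the definition $\C^\temp := \Sph_G^\temp \otimes_{\Sph_G} \C$, hence preserves colimits. Second, it is stable under the $\Sph_G$-action, by $\Sph_G$-linearity of the temperization functor; so if $c$ is tempered then $\A \star c$ is tempered for every $\A \in \Sph_G$. In particular, every object of the form $\A \star \W_0$, and every colimit of such, is tempered.

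Finally, I need that such objects exhaust $\Dmod(\Gr_G)^{\Whit}$. Under the Casselman--Shalika equivalence \eqref{eqn:Cass-Shalika}, $\Rep(\Gch)$ is generated under colimits by the irreducibles $V^\lambda$ for $\lambda \in \Lambda^{\dom}$, which correspond to the $\W_\lambda$; and by the Hecke-equivariance built into the FGV construction, each $\W_\lambda$ is of the form $\Sat_G(V^\lambda) \star \W_0$. Hence $\Dmod(\Gr_G)^{\Whit}$ is generated under colimits by objects of the form $\A \star \W_0$ and is therefore contained in $\Dmod(\Gr_G)^{\Whit,\temp}$. No serious obstacle arises: all the substance lies in the spectral identification of $\act_{\W_0}$ carried out in the lead-up to the corollary, and everything else is formal manipulation of the tempered/anti-tempered dichotomy.
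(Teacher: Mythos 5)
Your proof is correct, and it follows the same route the paper intends: extract from the spectral identification $\act_{\W_0} \simeq \pi_* \circ \Psi_{0\to\Nch}$ (transported across the t-exact Satake equivalence and Lemma \ref{lem:ker(Psi)}/Corollary \ref{cor:antitemp=t-nil}) that $\act_{\W_0}$ kills $\Sph_G^\antitemp$, then propagate temperedness from $\W_0$ to the whole category using $\Sph_G$-linearity, closure of the tempered subcategory under colimits, and the fact that the $\W_\lambda$'s generate. The paper compresses all of this into the single remark that the ``latter description implies'' the corollary, so your contribution is simply to make explicit the two formal steps the paper leaves to the reader: that $\C^\temp$ is a colimit-closed $\Sph_G$-submodule, and that the image of $\act_{\W_0}$ generates. (Your generation argument via $\W_\lambda = \Sat_G(V^\lambda)\star\W_0$ is fine; the paper's preferred phrasing would probably be that $\act_{\W_0}$ is essentially surjective because $\pi_*$ is, since $\pi$ is affine with section $\Delta$, but these come to the same thing.)
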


\sssec{}

Observe now that, by construction, the functor $\act_{\W_0}$ can be rewritten as the composition
$$
\Av_!^{N(\OO) \to \Whit} 
\circ 
\oblv^{N(\OO) \to \GO}:
\Sph_G
\longto
\Dmod(\Gr_G)^{\Whit}.
$$
From this, we see that $\act_{\W_0}$ admits a continuous right adjoint explicitly given by the formula
\begin{equation} \label{eqn:from Whit to Sph}
\Av_*^{N(\OO) \to \GO}
\circ
\oblv^{N(\OO) \to \Whit} 
:
\Dmod(\Gr_G)^{\Whit}
\longto
\Sph_G.
\end{equation}
The latter functor lands in $\Sph_G^\temp$. Indeed, as observed, $\Dmod(\Gr_G)^{\Whit}$ consists only of tempered objects and the functor in question preserves temperedness, being evidently $\Sph_G$-linear.

\begin{prop} \label{prop:Sph-temp}
The essential image of the resulting functor $\xi: 
\Dmod(\Gr_G)^{\Whit}
\longto
\Sph_G^\temp$
generates the target under colimits. Morevoer, $\xi$ preserves compactness.
\end{prop}

\begin{proof}
Let us write down the corresponding functor on the spectral side and prove the claims there. We begin by computing the right adjoint of \eqref{eqn:a spectral functor}. Since the map $\pi: \Omega \gch /\Gch \to \pt/\Gch$ is proper, the functor $\pi^!$ is right adjoint to $\pi_*^\ICoh$. It follows that \eqref{eqn:from Whit to Sph} is, spectrally, the functor
$$
\Psi_{\Nch \to \all} \circ \pi^!: 
\ICoh(\pt/\Gch)
\longto
\ICoh_\N(\Omega \gch/\Gch).
$$
Obviously, this functor factors through $\QCoh(\Omega \gch/\Gch)$ as expected. Thus, the spectral counterpart of $\xi$ is simply the functor $\pi^*: \QCoh(\pt/\Gch) \to \QCoh(\Omega \gch/\Gch)$ up to tensoring with a shifted line bundle.
This immediately shows that $\xi$ preserves compactness. To conclude, it suffices to prove that the essential image of $\pi^*$ generates the target under colimits: this is clear from the affineness of the derived scheme $\Omega \gch$.
\end{proof}

\begin{cor}
The \emph{spherical-Whittaker sheaves} parametrized by $\lambda \in \Lambda^{\dom}$ and defined by the formula
$$
\WS_\lambda := \Av_*^\GO \Av_!^{\Whit}(\delta_{[t^\lambda]})
$$
are compact and generate $\Sph_G^\temp$.
\end{cor}

\begin{proof}
This holds true because the objects $\Av_!^{\Whit}(\delta_{[t^\lambda]})$ for all dominant $\lambda$'s are compact (obviously) and generate $\Dmod(\Gr_G)^{\Whit}$ by the Casselman-Shalika formula.
\end{proof}

\sssec{}

It follows  easily from Proposition \ref{prop:Sph-temp} that $\Sph_G^\temp$ is generated under colimits by the essential image of
$$
\Sph_G 
\xto{\F \mapsto \F \star \WS_0}
\Sph_G.
$$
In particular, we have:
\begin{cor}
Let $\C$ be a DG category equipped with an action of $\Sph_G$. An object $c \in \C$ is anti-tempered if and only if $\WS_0 \star c \simeq 0$.
\end{cor}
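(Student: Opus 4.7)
The plan is to combine the Lemma just proved---namely that $\ltemp\Sph_G$ is the colimit closure of the essential image of $\F \mapsto \F \star \WS_0$---with the characterization of anti-temperedness via the tempered unit $\B \in \Sph_G^\temp$: by the second item of the theorem in Section \ref{intro:temp} (proved in \cite{omega}), an object $c \in \C$ is anti-tempered if and only if $\B \star c \simeq 0$. With this reformulation in hand, the proof of the present corollary splits cleanly into its two directions.

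For the implication $\WS_0 \star c \simeq 0 \,\Rightarrow\, c \in \C^\antitemp$, I would invoke the Lemma to present $\B$ as a colimit $\colim_i \F_i \star \WS_0$ for some diagram $i \mapsto \F_i$ in $\Sph_G$. Continuity of the $\Sph_G$-action on $\C$ then yields
\[
\B \star c
\,\simeq\,
\colim_i \F_i \star \WS_0 \star c
\,\simeq\, 0,
\]
as desired.

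For the converse, I would invoke that $\Sph_G^\temp \subseteq \Sph_G$ corresponds under derived Satake to the inclusion of the two-sided ideal $\QCoh(\Omega\gch/\Gch) \subseteq \ICoh_{\Nch}(\Omega\gch/\Gch)$; consequently $\B$ is a two-sided idempotent, and in particular $\WS_0 \star \B \simeq \WS_0$, since $\WS_0$ is tempered. Given $c$ anti-tempered, associativity then forces
\[
\WS_0 \star c
\,\simeq\,
(\WS_0 \star \B) \star c
\,\simeq\,
\WS_0 \star (\B \star c)
\,\simeq\, 0.
\]

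The main point that will require careful justification is the two-sidedness identity $\WS_0 \star \B \simeq \WS_0$: while intuitively clear from the spectral picture, it rests on $\Sph_G^\temp \hookrightarrow \Sph_G$ being a two-sided ideal under convolution, which I plan to verify by passing through derived Satake.
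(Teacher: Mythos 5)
Your proposal is correct, and it is a reasonable reconstruction of the argument the paper leaves implicit behind the words \emph{``In particular, we have.''}

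Both directions are handled soundly. For $\WS_0 \star c \simeq 0 \Rightarrow c$ anti-tempered, the observation that $\{\A \in \Sph_G : \A \star c \simeq 0\}$ is a cocomplete full subcategory containing the image of $\F \mapsto \F \star \WS_0$, hence containing all of $\Sph_G^\temp$ and in particular $\B$, is exactly what one needs. For the converse, you correctly identify that the crux is the right-sided identity $\WS_0 \star \B \simeq \WS_0$, which is \emph{not} an instance of item~(1) of the theorem in Section~\ref{intro:temp} as stated (that is a left-action statement); it requires $\Sph_G^\temp$ to be a two-sided ideal, or equivalently $\B$ to be a central idempotent. This is indeed true and is established in \cite{AG1} and \cite{omega}; under derived Satake it amounts to the fact that $\QCoh(\Omega\gch/\Gch) \subseteq \ICoh_{\Nch}(\Omega\gch/\Gch)$ is a monoidal colocalization for the (symmetric) monoidal structure that corresponds to convolution. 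An equivalent and perhaps slightly cleaner way to phrase the second direction, avoiding explicit mention of $\B$, is to use that the temperization functor $\temp_\C : \C \to \C^\temp$ is $\Sph_G$-linear (because $\Sph_G$ is rigid, so the right adjoint of the $\Sph_G$-linear inclusion $\C^\temp \hookrightarrow \C$ is automatically $\Sph_G$-linear): then $\WS_0 \star c \simeq \temp_\C(\WS_0 \star c) \simeq \WS_0 \star \temp_\C(c) \simeq 0$, where the first isomorphism uses that $\WS_0$ is tempered (so $\WS_0 \star c$ lands in $\C^\temp$, by the preceding corollary and $\Sph_G$-linearity of the functor \eqref{eqn:from Whit to Sph}). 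This is the same content as your centrality argument, just repackaged. In short: the proof is correct, and the subtlety you flag is genuine but is a standard fact that can safely be cited.
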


\begin{rem}
This point of view yields another proof of the anti-temperedness of $\omega_{\Bun_G}$, see \cite{omega}. Indeed, it suffices to prove that $\WS_0 \star \omega_{\Bun_G} \simeq 0$. This computation is left to the reader. 
\end{rem}

\sec{Proofs} \label{sec:proofs}

In this section, we show the implications
$$
\mbox{Theorem \ref{mainthm:Hecke}}
\implies
\mbox{Theorem \ref{mainthm:diagonal}}
\implies
\mbox{Theorem \ref{mainthm:star}}
$$
and then we prove Theorem \ref{mainthm:Hecke}.

\ssec{First reduction step}

Let us explain the implication $\mbox{Theorem \ref{mainthm:diagonal}}
\implies
\mbox{Theorem \ref{mainthm:star}}$.

\sssec{}

Fix $x \in X$ once and for all, and consider the Hecke action of $\Sph_G$ on $\Dmod(\Bun_G)$ at $x$. We usually omit the symbol $x$ from the notation and simply refer to this action as the Hecke action. 

\sssec{}  \label{sssec:3notions}

Observe that $\Bun_{G \times G} \simeq \Bun_G \times \Bun_G$ and that
\begin{equation} \label{eqn:Dmod on product}
\Dmod(\Bun_G \times \Bun_G) 
\simeq
\Dmod(\Bun_G) \otimes \Dmod(\Bun_G).
\end{equation}
This equivalence of DG categories follows from the dualizability of $\Dmod(\Bun_G)$, see \cite{DG-cptgen} for details.
Hence, we see that $\Dmod(\Bun_G \times \Bun_G)$ is equipped with three Hecke actions: the Hecke action of $\Sph_{G \times G}$, as well as the $\Sph_G$-actions on the left and right factors. Accordingly, objects of $\Dmod(\Bun_G \times \Bun_G)$ have three different notions of temperedness. These notions are really different in general (it is easy to give examples); however, by symmetry, they all coincide for the object $\Delta_*(\omega_{\Bun_G})$.

\sssec{}

For concreteness, in our proofs of Theorem \ref{mainthm:diagonal} and Theorem \ref{mainthm:star}, we will use the left $\Sph_G$-action on $\Dmod(\Bun_G \times \Bun_G)$. So, the precise restatement of Theorem \ref{mainthm:diagonal} that we will prove reads as follows.

\begin{thm} \label{thm:diagonal for left Sph-action}
The object $\Delta_{*}(\omega_{\Bun_G})\in \Dmod(\Bun_G \times \Bun_G) $ is tempered with respect to the left $\Sph_G$-action on the first factor.
\end{thm}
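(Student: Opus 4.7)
\textbf{Proof plan for Theorem \ref{thm:diagonal for left Sph-action}.}

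The plan is to apply Corollary \ref{cor:key} and reduce the statement to the vanishing
$$
\A \star^{(1)} \Delta_{*,\dR}(\omega_{\Bun_G}) \simeq 0 \quad \text{in } \Dmod(\Bun_G \times \Bun_G) \quad \text{for every } \A \in \Sph_G^{\leq -\infty}.
$$
The left $\Sph_G$-action on the first factor is realized by the correspondence
$$
\Bun_G \times \Bun_G \xleftarrow{h^\leftarrow \times \id} \Hecke^\glob_{G,x} \times \Bun_G \xrightarrow{h^\rightarrow \times \id} \Bun_G \times \Bun_G,
$$
together with the $!$-pullback of $\A$ along $\Hecke^\glob_{G,x} \times \Bun_G \to \Hecke^\glob_{G,x} \xrightarrow{r} \Hecke^\loc_{G,x}$. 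The first technical step is to apply base change to the Cartesian square associated with the diagonal $\Delta\colon \Bun_G \hookrightarrow \Bun_G \times \Bun_G$ and use the projection formula (the intersection of the graph of $h^\rightarrow$ with $\Delta$ identifies with $\Hecke^\glob_{G,x}$ itself) to obtain the explicit formula
$$
\A \star^{(1)} \Delta_{*,\dR}(\omega_{\Bun_G}) \simeq p_{*,\dR}\bigl(r^! \A\bigr),
$$
where $p = (h^\leftarrow, h^\rightarrow)\colon \Hecke^\glob_{G,x} \to \Bun_G \times \Bun_G$.

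At this point Theorem \ref{mainthm:Hecke} applies directly: since $\A$ is infinitely connective in $\Sph_G$, the object $r^!\A$ is infinitely connective in $\Dmod(\Hecke^\glob_{G,x})$. The target category $\Dmod(\Bun_G \times \Bun_G)$ is associated with a locally QCA stack, hence by Lemma \ref{lem:Dmod t-properties}(6) its infinitely connective subcategory is zero. So it suffices to verify that $p_{*,\dR}(r^!\A)$ is itself infinitely connective in $\Dmod(\Bun_G \times \Bun_G)$, i.e.\ that $p_{*,\dR}$ maps $r^!\A$ into $\Dmod(\Bun_G \times \Bun_G)^{\leq -\infty}$.

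The main obstacle is this last step. Pushforward along ind-proper ind-schematic maps does not preserve infinite connectivity in general; a standard counterexample is $p_{\Gr}\colon \Gr_G \to \pt$, which sends the infinitely connective $\omega_{\Gr_G}$ to the non-vanishing BM-homology of $\Gr_G$. Consequently one cannot merely use abstract infinite connectivity of $r^!\A$: the argument must exploit the fact that $r^!\A$ arises as a $!$-pullback from the local Hecke stack, hence is $\GO$-biequivariant and factors through the level-structure torsor at $x$. My plan to finish is to combine the smooth-local principle of Lemma \ref{lem:smooth goodness} with a direct t-structure estimate mirroring the proof of Theorem \ref{mainthm:Hecke}: restrict $p$ over a smooth atlas of $\Bun_G \times \Bun_G$ by quasi-compact open substacks $\Bun_G^{\leq n} \times \Bun_G^{\leq n'}$; on each such chart the Hecke ind-stack trivializes, reducing the question to a pushforward calculation for the affine Grassmannian fibration, where the $\GO$-equivariance of $r^!\A$ together with the t-exactness of smooth pullback forces the pushforward to remain in $\Dmod^{\leq -\infty}$. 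With this estimate in hand the QCA property of the base then annihilates the result, completing the proof.
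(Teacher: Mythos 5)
Your reduction via Corollary~\ref{cor:key} and the base-change identification of the action functor with $p_{*}\circ r^!$ matches the paper. Your use of Theorem~\ref{mainthm:Hecke} to conclude that $r^!\A$ is infinitely connective is also correct. Where you go astray is in declaring the last step a ``main obstacle'': you assert that $p=(h^\leftto,h^\to)$ is \emph{ind-proper} (by analogy with $\Gr_G\to\pt$) and that therefore $p_{*}$ might fail to preserve infinite connectivity. This is a misreading of the geometry. The map that has $\Gr_G$-like fibers and is ind-proper is $h^\leftto$ alone; the full map $(h^\leftto,h^\to)\colon \Hecke^\glob_{G,x}\to\Bun_G\times\Bun_G$, which remembers \emph{both} bundles, has fiber over $(E_1,E_2)$ equal to $\mathrm{Isom}(E_1|_{X-x},E_2|_{X-x})$, a torsor under a twist of $G[X-x]$ --- an \emph{ind-affine} ind-scheme, not an ind-proper one. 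This is exactly Lemma~\ref{lem:functor h} in the paper: $h$ is ind-schematic and ind-affine, hence $h_{*,\ren}$ is right t-exact, and right t-exact functors tautologically preserve $(-)^{\leq-\infty}$. Combined with $\Dmod(\Bun_G\times\Bun_G)^{\leq-\infty}\simeq 0$, this finishes the proof with no further estimate needed.

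Consequently your proposed workaround (smooth-local reduction to an ``affine Grassmannian fibration'' and an appeal to $\GO$-equivariance) is both unnecessary and, as written, unsound: over an atlas of $\Bun_G\times\Bun_G$, the ind-scheme $\Hecke^\glob_{G,x}$ does \emph{not} trivialize as a $\Gr_G$-fibration (that would be the case only over one factor), and no such equivariance argument is required once the ind-affineness of $h$ is observed. You should replace the last paragraph of your plan by: (i) the observation that $h=(h^\leftto,h^\to)$ is ind-affine, (ii) the factorization of the Hecke correspondence through $(h^\leftto\times\id)$ (ind-proper, so $!$-pushforward agrees with renormalized pushforward) and $(\id,h^\to)$ (schematic), which identifies your $p_{*,\dR}$ with $h_{*,\ren}$, and (iii) the right t-exactness of $h_{*,\ren}$ as in Lemma~\ref{lem:functor h}.
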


\sssec{} \label{sssec:Psid*}

Now we can show that Theorem \ref{mainthm:star} is a corollary of the above statement. Using \eqref{eqn:Dmod on product}, we regard $\Delta_{*}(\omega_{\Bun_G})$ as the kernel of a functor 
$$
\Dmod(\Bun_G)^\vee \to \Dmod(\Bun_G).
$$ 
This functor was introduced in \cite{DG-cptgen} under the notation $\Psid^{\on{naive}}$; we will denote it by $\Psid_*$ in accordance with our discussion in \cite{DL}. From this point of view, Theorem \ref{thm:diagonal for left Sph-action} asserts that there exists an isomorphism of functors:
$$
\Psid_* \simeq \temp \circ \Psid_*.
$$
In other words, this means that $\Psid_*$ lands in $\ltemp\Dmod(\Bun_G)$. Now, it remains to quote \cite[Lemma 2.1.9]{DL}, which identifies the essential image of $\Psid_*$ with $\Dmod(\Bun_G)^\stargen$.

\ssec{Second reduction step}

Now we prove the implication $\mbox{Theorem \ref{mainthm:Hecke}}
\implies
\mbox{Theorem \ref{thm:diagonal for left Sph-action}}$.

\sssec{} \label{sssec:reformulation of temp of Delta omega}

By Corollary \ref{cor:key} and the discussion of Section \ref{sssec:3notions}, the temperedness of $\Delta_*(\omega_{\Bun_G})$ is equivalent to proving that
$$
\A \star \Delta_*(\omega_{\Bun_G}) \simeq 0
$$ 
for any $\A \in \Sph_G^{\leq -\infty}$, where $\star$ denotes the left action of $\Sph_{G,x}$ on $\Dmod(\Bun_G \times \Bun_G)$ at the point $x \in X$.
We would like to describe the functor
$$
- \star \Delta_*(\omega_{\Bun_G})
:
\Sph_G \longto \Dmod(\Bun_G \times \Bun_G)
$$
explicitly. To do this, we need to embark on a small technical digression.

\sssec{}

When a map $f:\X \to \Y$ is ind-schematic, we can consider its \emph{renormalized push-forward} at the level of D-modules, see \cite[Section 5.1.6]{omega}. The definition is concrete terms goes as follows.  Write $\X = \colim_{j \in \J} \X_j$, with closed embeddings $\X_j \hto \X$ and schematic maps $f_j: \X_j \hto \X \xto{f} \Y$. Recall that
$$
\Dmod(\X) \simeq \colim_{j \in \J} \Dmod(\X_j),
$$
with the colimit taken along the structure pushforward functors $(i_{j \to j'})_*: \Dmod(\X_j) \hto \Dmod(X_{j'})$.
With these notations, the renormalized pushforward $f_{*,\ren}: \Dmod(\X) \to \Dmod(\Y)$ is the functor defined by 
$$
f_{*,\ren}
\simeq
\colim_{j \in \J} \; (f_j)_{*}.
$$
Thanks to Lemma \ref{lem:Dmod t-properties}, this functor is right t-exact as soon as $f$ is ind-affine. We will apply this general discussion to the following geometric situation:

\begin{lem} \label{lem:functor h}
The natural forgetful map $h: \Hecke_{G,x}^\glob \to \Bun_G \times \Bun_G$ is ind-schematic and ind-affine. 
\end{lem} 

\begin{proof}
The proof follows easily from \cite[Sections 2.7.2-2.7.5]{contract}.
\end{proof}

\begin{cor}
The functor $h_{*,\ren}: \Dmod(\Hecke_{G,x}^\glob) \to \Dmod(\Bun_G \times \Bun_G)$ is well-defined and right t-exact.
\end{cor}

\sssec{}

We can now unravel the functor
$$
- \star \Delta_*(\omega_{\Bun_G})
:
\Sph_G \longto \Dmod(\Bun_G \times \Bun_G).
$$
The result of the computation is the following.
\begin{lem}
We have:
\begin{equation} \label{eqn:funny functor}
- \star \, \Delta_*(\omega_{\Bun_G})
\simeq
h_{*,\ren} \circ r^!:
\Sph_G \longto \Dmod(\Bun_G \times \Bun_G)
\end{equation}
where the maps $h$ and $r$ (introduced above and in Section \ref{sssec:r maps of Hecke} respectively) form the correspondence
$$
\Hecke_{G,x}^{\loc} :=
\GO \backsl \GK / \GO
\xleftarrow{\;\; r \; \;}
\Hecke_{G,x}^\glob
\xto{\;\; h \; \;}
\Bun_G \times \Bun_G.
$$
\end{lem}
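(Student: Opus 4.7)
The plan is to unravel both sides of~\eqref{eqn:funny functor} and identify them via a base change plus projection formula computation. Recall that the Hecke action of $\Sph_G$ at $x$ on $\Dmod(\Bun_G)$ is realized through the correspondence
$\Bun_G \xleftarrow{\;h^\leftarrow\;} \Hecke_{G,x}^\glob \xrightarrow{\;h^\rightarrow\;} \Bun_G$
with convolution kernel $r^!(\A)$; the total map $h$ of Lemma~\ref{lem:functor h} is just the product $(h^\rightarrow, h^\leftarrow)$. Upgraded to an action on the first factor of $\Dmod(\Bun_G \times \Bun_G)$, the action reads
$$
\A \star K \simeq (h^\rightarrow \times \id)_{*,\ren}\Bigl( (r \circ \pr_1)^!(\A) \sotimes (h^\leftarrow \times \id)^!(K) \Bigr),
$$
where $\pr_1 \colon \Hecke_{G,x}^{\glob} \times \Bun_G \to \Hecke_{G,x}^{\glob}$ is the projection.

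First, specialize to $K = \Delta_{*,\dR}(\omega_{\Bun_G})$ and compute the pullback $(h^\leftarrow \times \id)^!(\Delta_{*,\dR}\omega_{\Bun_G})$ via base change along the Cartesian square whose top edge is the graph embedding $(\id, h^\leftarrow) \colon \Hecke_{G,x}^\glob \hookrightarrow \Hecke_{G,x}^\glob \times \Bun_G$ of $h^\leftarrow$. Since $\omega$ is the unit for $\sotimes$, the result is $(\id, h^\leftarrow)_{*,\dR}(\omega_{\Hecke_{G,x}^\glob})$.

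Second, apply the projection formula for the ind-closed embedding $(\id, h^\leftarrow)$:
$$
(r \circ \pr_1)^!(\A) \sotimes (\id, h^\leftarrow)_{*,\dR}(\omega_{\Hecke_{G,x}^\glob}) \simeq (\id, h^\leftarrow)_{*,\dR}\bigl((r \circ \pr_1 \circ (\id, h^\leftarrow))^!(\A)\bigr).
$$
Since $\pr_1 \circ (\id, h^\leftarrow) = \id$, the inner pullback collapses to $r^!(\A)$. Finally, $(h^\rightarrow \times \id) \circ (\id, h^\leftarrow) = (h^\rightarrow, h^\leftarrow) = h$, so composing the two pushforwards yields $h_{*,\ren}(r^!(\A))$, as claimed.

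The main technical point is justifying the base change and projection formula in this ind-stack setting. Both reduce to the schematic case via the ind-presentation $\Hecke_{G,x}^\glob \simeq \colim_j \Hecke_{G,x}^{\glob, \leq j}$ used in the definition of $h_{*,\ren}$, combined with the fact that the diagonal of $\Bun_G$ is representable of finite type, so that D-module base change and the projection formula hold at each finite level and assemble to give the required ind-level statement.
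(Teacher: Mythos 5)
Your proof is correct and follows essentially the same route as the paper's: unravel the Hecke action through the global Hecke correspondence, base-change along the Cartesian square involving the diagonal to replace $\Delta_{*,\dR}(\omega_{\Bun_G})$ by a pushforward of $\omega$ from $\Hecke_{G,x}^\glob$, apply the projection formula, and compose the two pushforwards into $h_{*,\ren}$. The only place where you are slightly less careful than the paper is the final composition step: the paper explicitly notes that $(h^{\leftarrow}\times\id)_!$ agrees with the renormalized pushforward because $h^{\leftarrow}$ is ind-proper, and that the graph embedding is schematic so its de Rham and renormalized pushforwards coincide, which is what licenses composing them into a single $h_{*,\ren}$; you gloss over this by referring generically to the ind-presentation.
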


\begin{proof}
Consider the following commutative diagram and observe that the central square is cartesian.
\begin{equation} 
\nonumber
\begin{tikzpicture}[scale=1.5]
\node (00) at (0,0) {$ \Hecke_G^{\glob} \times \Bun_G  $};
\node (10) at (3,0) {$ \Bun_G \times \Bun_G$};
\node (01) at (0,1) {$\Hecke_G^{\glob}$};
\node (11) at (3,1) {$ \Bun_G$};
\node (21) at (4.5,1) {$\pt$};
\node (-10) at (-3,0) {$\Bun_G \times \Bun_G$};
\node (0-1) at (0,-1) {$\Hecke_G^{\loc}$};
\path[->,font=\scriptsize,>=angle 90]
(11.east) edge node[above] {$p_{\Bun_G}$} (21.west); 
\path[->,font=\scriptsize,>=angle 90]
(00.east) edge node[above] {$h^\to \times \id$} (10.west); 
\path[->,font=\scriptsize,>=angle 90]
(01.east) edge node[above] {$h^\to$} (11.west); 
\path[->,font=\scriptsize,>=angle 90]
(00.west) edge node[above] {$h^\leftto \times \id$} (-10.east); 
\path[->,font=\scriptsize,>=angle 90]
(01.south) edge node[right] {$(\id, h^\to)$} (00.north);
\path[->,font=\scriptsize,>=angle 90]
(11.south) edge node[right] {$\Delta$} (10.north);
\path[->,font=\scriptsize,>=angle 90]
(00.south) edge node[right] {$r \circ p_1$} (0-1.north);
\path[->,font=\scriptsize,>=angle 90]
(01.west) edge node[above] {$h$} (-10.north);
\end{tikzpicture}
\end{equation}
After unwinding the definition of the Hecke action, a base-change along the central fiber square shows that
\begin{equation} \label{eqn:funny Hecke}
- \star \Delta_*(\omega_{\Bun_G})
\simeq
(h^\leftto \times \id)_! \circ (\id, h^\to)_{*} \circ r^!.
\end{equation}
Since $h^\leftto$ is ind-proper, the functor $(h^\leftto \times \id)_!$ agrees with the renormalized de Rham push-forward. On the other hand, $(\id, h^\to)$ is schematic and so its de Rham and renormalized push-forwards agree. Hence the two push-forward functors appearing in \eqref{eqn:funny Hecke} compose to give the renormalized push-forward along $h$.
\end{proof}

\sssec{}

The above computation shows that Theorem \ref{mainthm:diagonal} (in the incarnation of Theorem \ref{thm:diagonal for left Sph-action}) is equivalent to proving that $h_{*,\ren} \circ r^!: \Sph_G \to \Dmod(\Bun_G \times \Bun_G)$ annihilates $\Sph_G^{\leq - \infty}$.
Since $h_{*,\ren}$ is right t-exact and since $\Dmod(\Bun_G \times \Bun_G)^{\leq - \infty} \simeq 0$, it suffices to prove that the map
$$
r: \Hecke_{G,x}^\glob
\longto
\Hecke_{G,x}^\loc
$$
is t-good. This is precisely the statement of Theorem \ref{mainthm:Hecke}.

\ssec{The semisimple case} \label{ssec:ss case}

We now prove Theorem \ref{mainthm:Hecke} in the case $G$ is semisimple. The general reductive case, which is only slightly more complicated, is treated afterwards in Section \ref{ssec:red case}.

\sssec{}

Let  $\A \in \Sph_G^{\leq - \infty}$.
We need to verify that $r^!(\A) \in \Dmod(\Hecke_{G,x}^\glob)$ is infinitely connective, too. In view of Lemma \ref{lem:smooth goodness}, this can be checked smooth-locally on $\Hecke_{G,x}^\glob$. 
Then, by taking open subschemes of an atlas of $\Bun_{G \times G}$, we see that it suffices to check the following. 

\begin{lem}
For any map $S \to \Bun_{G \times G}$ from an affine scheme $S$, the composition
\begin{equation} \label{eqn:composite map}
\Hecke_{G,x}^\glob \utimes_{\Bun_{G \times G}} S
\longto
\Hecke_{G,x}^\glob
\longto
\Hecke_{G,x}^\loc
\end{equation}
is t-good. 
\end{lem}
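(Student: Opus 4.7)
Since $\Gr_{G,x}\to\Bun_G$ is a $G[X-x]$-torsor in the semisimple case, it is smooth and surjective. Combined with \lemref{lem:smooth goodness}, this lets me pass to a smooth cover $\tilde S\to S$ equipped with a lift $\tilde S\to\Gr_{G,x}\times\Gr_{G,x}$. The lift provides trivializations $\phi_1,\phi_2$ of $P_1,P_2$ on $X-x$, under which any global isomorphism $\psi:P_1|_{X-x}\simeq P_2|_{X-x}$ corresponds to an element $g:=\phi_2\circ\psi\circ\phi_1^{-1}\in G[X-x]$. This yields an identification
\[
\Hecke^{\glob}_{G,x}\utimes_{\Bun_G\times\Bun_G}\tilde S\simeq\tilde S\times G[X-x].
\]

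Next I pass to a further smooth cover $\tilde{\tilde S}\to\tilde S$ trivializing $P_1,P_2$ on $\hat D_x$ (equivalently, lifting the induced map $\tilde S\to\Gr_G\times\Gr_G$ to $\tilde{\tilde S}\to\GK\times\GK$). A direct calculation shows that the composition factors as
\[
\tilde{\tilde S}\times G[X-x]\longto\GK\times\GK\times\GK\xto{\text{mult}}\GK\longto\Hecke_{G,x}^{\loc},
\]
where the first arrow sends $(\tilde{\tilde s},g)$ to $(u_2(\tilde{\tilde s}),\,g|_{\hat D^*_x},\,u_1(\tilde{\tilde s})^{-1})$ for certain smooth maps $u_1,u_2:\tilde{\tilde S}\to\GK$ coming from the local and global trivializations. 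The $u_i$'s are smooth covers, the multiplication on $\GK$ is smooth as a group operation, and the quotient $\GK\to\Hecke_{G,x}^\loc$ is t-good since its $!$-pullback is the forget functor analogous to the t-exact $\oblv^{\GO}:\Sph_G\to\Dmod(\Gr_G)$. Thus the t-goodness of the entire composition reduces to that of the restriction $\mathsf{res}:G[X-x]\to\GK$, or equivalently (after composing with the t-good $\GO$-quotients) of the orbit map $G[X-x]\to\Gr_{G,x}$, $g\mapsto g\cdot[e]$.

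I would factor this orbit map as
\[
G[X-x]\twoheadrightarrow G[X-x]/G\hookrightarrow\Gr_{G,x},
\]
where $G=G[X]$ is the finite-dimensional stabilizer of the identity coset (the group of constant maps $X\to G$). The first arrow is a $G$-torsor, smooth with finite-dimensional fibers, hence t-excellent. The second arrow realizes $G[X-x]/G$ as the fiber of the smooth projection $\Gr_{G,x}\to\Bun_G$ over the trivial-bundle substack $\pt/G\hookrightarrow\Bun_G$, and thus as a locally closed immersion of finite codimension $g\cdot\dim G$ (where $g$ denotes the genus of $X$). For a finite-codimension locally closed immersion of ind-schemes, the $!$-pullback differs from the $*$-pullback only by a finite cohomological shift, which preserves infinite connectivity.

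The main obstacle is to make this last step rigorous in the ind-scheme setting: one must verify that $G[X-x]/G\hookrightarrow\Gr_{G,x}$ is an ind-regular immersion of the expected finite codimension, with the shift relation $i^!\simeq i^*[\cdot]$ holding for D-modules. This rests on the smoothness and finite-dimensionality of $\Bun_G$ in the semisimple case, together with smooth descent along $\Gr_{G,x}\to\Bun_G$.
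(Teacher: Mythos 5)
Your broad strategy (trivialize the bundles on an étale cover of $S$, factor the resulting map of ind-schemes, reduce to an orbit map into $\Gr_{G,x}$) tracks the paper's, but several key steps do not hold up. The claim that $\Gr_{G,x}\to\Bun_G$ is \emph{smooth}, because it is a $G[X-x]$-torsor, is false in the sense required by Lemma~\ref{lem:smooth goodness}: $G[X-x]$ is an ind-group of infinite type, so the map has infinite-dimensional fibres and is not smooth of any finite relative dimension (cf.~Lemma~\ref{lem:Dmod t-properties}(4)). The cover you need does exist, but its existence is exactly Drinfeld--Simpson \cite{DS}, which furnishes an \emph{\'etale} cover $S'\to S$ trivializing $E_1,E_2$ on $X^*_{S'}$; that is what the paper uses. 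Likewise, the factorization through $\GK\times\GK\times\GK\to\GK\to\Hecke^\loc_{G,x}$ places you in $\Dmod(\GK)$, an ind-pro-scheme of infinite type that sits outside the t-structure formalism the paper sets up, and the assertion that ``multiplication on $\GK$ is smooth'' is not true in any form that would make $!$-pullback right t-exact up to finite shift. Consequently, the reduction from the twisted map $\phi\mapsto\GO\,\alpha\phi\beta\,\GO$ to the bare orbit map $g\mapsto g\cdot[e]$ is not justified.

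The paper instead handles $\alpha$ and $\beta$ by two short, separate observations: left translation $m_\alpha$ is an automorphism of $\Gr_G$, so $(m_\alpha)^!$ is t-exact; and the remaining map $r_\beta:\phi\mapsto\phi\beta\GO$ is the base change, along $\Gr_{G,x}\to\Bun_G$, of $i_E:\pt\to\Bun_G$ where $E$ is the $G$-bundle determined by $\beta$. So it suffices that $i_E$ (indeed any finite-type affine $T\to\Bun_G$) be t-excellent, which follows smooth-locally from the finite-type scheme case. Throughout, the paper only ever $!$-pulls back from $\Gr_G$, using right t-exactness of $\oblv^\GO$, never from $\GK$. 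Your final factoring through $G[X-x]/G\hookrightarrow\Gr_{G,x}$ and the appeal to a finite-codimension ind-regular immersion is the hardest part of your argument and, as you acknowledge, remains unproved; the paper's base-change trick avoids it entirely.
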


\sssec{}

The map $S \to \Bun_{G \times G}$ classifies two $G$-bundles $E_1$ and $E_2$ on $X_S$.
By \cite[Theorem 3]{DS}, up to base-changing along a suitable \'etale map $S' \to S$ (and then renaming $S'$ with $S$), we may assume that $E_1$ and $E_2$ are both trivial on $X^*_S := X^* \times S$.
In this case, 
$$
\Hecke_{G,x}^\glob \utimes_{\Bun_{G \times G}} S
\simeq
G[X^*] \times S.
$$

\sssec{}

Up to replacing $S$ with a suitable \'etale cover, we can also assume that $E_1$ and $E_2$ have been trivialized on $\wh D_{x, S}$. Now, let us invoke the Beauville-Laszlo theorem \cite{BL}: letting $R:= H^0(S,\O_S)$, we obtain that $E_1$ and $E_2$ are determined by two elements of $G(R\ppart)$, which we name $\alpha$ and $\beta^{-1}$ respectively.
Unraveling the definition, \eqref{eqn:composite map} is the map
\begin{equation} \label{map:simplified}
G[X^*] \times S
\longto
\Hecke_{G,x}^\loc
\end{equation}
$$
(\phi, f) \mapsto \GO \alpha \phi \beta \GO.
$$

\sssec{}
The forgetful functor $\oblv^\GO: \Sph_{G,x} \to \Dmod(\Gr_{G,x})$ is right t-exact by construction, hence it suffices to prove that the map
\begin{equation} \label{map:to Gr}
G[X^*] \times S
\longto
\Gr_{G,x}
\end{equation}
$$
(\phi, f) \mapsto \alpha \phi \beta \GO
$$
is t-good. In turn, the latter map factors as
$$
G[X^*] \times S
\longto
\Gr_{G,x} \times S
\longto
\Gr_{G,x}.
$$

\sssec{} \label{sssec:simplification}

Since the second map above is obviously t-excellent, we just focus on the left map. This is the same as working over the ground ring $R$ and on the curve $X_S$. 
Instead of burdening the notation, we can pretend for the remainder of Section \ref{ssec:ss case} that our ground field $\kk$ is actually the ground ring $R$; so that $\alpha$ and  $\beta$ are $\kk$-points of $G(\bbK)$. This way, we can get on with our usual notation. In other words, it remains to prove that the map
$$
G[X^*]
\longto
\Gr_{G,x},
\hspace{.4cm}
\phi \mapsto \alpha \phi \beta \GO
$$
of ind-schemes is t-good for any $\alpha, \beta \in G(\kk\ppart)$.

\sssec{}

For $g \in G(\kk\ppart)$, denote by $m_g$ the action map on $\Gr_{G,x}$. We claim that $(m_g)^!$ is right t-exact. Indeed, $(m_g)^! \simeq (m_{g^{-1}})_*$ and the assertion follows from the fact that $m_{g^{-1}}$ is a closed embedding.
Then it suffices to prove that the map
$$
r_\beta:
G[X^*]
\longto
\Gr_{G,x}
\hspace{.4cm}
\phi 
\mapsto 
\phi \beta \GO
$$
is t-good.

\begin{lem} 
For any $\beta \in G(\kk\ppart)$, the map $r_\beta$ is t-excellent.
\end{lem}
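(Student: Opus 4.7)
My approach uses that, for semisimple $G$, the uniformization map $q \colon \Gr_{G,x} \to \Bun_G$ is a $G[X^*]$-torsor for the left action, so the composite $q \circ r_\beta$ is the constant map to $[E_\beta] := q([\beta])$. The torsor property identifies $r_\beta$ with an isomorphism $G[X^*] \simeq q^{-1}([E_\beta])$ followed by the inclusion of the fiber into $\Gr_{G,x}$; the plan is to realize this geometric picture on a smooth cover and extract from it a finite-codimension locally closed embedding of finite-type schemes.

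Concretely, I would choose a smooth atlas $U \to \Bun_G$ (restricted to a quasi-compact open containing $[E_\beta]$) over which the torsor trivializes, so that $\Gr_{G,x} \times_{\Bun_G} U \simeq U \times G[X^*]$, and set $U_\beta := U \times_{\Bun_G} \{[E_\beta]\}$, a smooth locally closed subscheme of $U$ of finite codimension. The first projection $\pi \colon G[X^*] \times U_\beta \to G[X^*]$ is a smooth surjection, and along this smooth cover of the source $r_\beta$ pulls back, up to a right-translation isomorphism on the $G[X^*]$-factor, to the composite
\[
G[X^*] \times U_\beta \xrightarrow{\;\sim\;} U_\beta \times G[X^*] \hookrightarrow U \times G[X^*] \simeq \Gr_{G,x} \times_{\Bun_G} U \to \Gr_{G,x}.
\]

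Each ingredient above is manifestly t-excellent: the middle arrow is base-changed along the projection $U \times G[X^*] \to U$ from the locally closed embedding $U_\beta \hookrightarrow U$ of finite-type schemes, whose $!$-pullback has cohomological amplitude bounded by $\codim(U_\beta, U) < \infty$; the final arrow is smooth. T-excellence then descends along the smooth cover $\pi$ by the evident adaptation of Lemma~\ref{lem:smooth goodness} (same argument, with ``infinitely connective'' replaced by ``eventually coconnective'' throughout the totalization estimate). The main technical point is simply producing the smooth trivializing cover, which is standard for semisimple $G$ by uniformization; the finiteness of the shift is automatic, as $\codim(U_\beta, U)$ is a fixed integer once $\beta$ is fixed. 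Positive-dimensional automorphism groups (for non-generic $\beta$) cause no issue, since $U_\beta$ remains a finite-codimension locally closed subscheme of the finite-type scheme $U$.
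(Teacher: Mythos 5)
Your proposal is correct and takes essentially the same route as the paper's proof. The paper identifies $r_\beta$ as the base-change of the point inclusion $i_E\colon \pt \to \Bun_G$ along the uniformization map $\Gr_{G,x} \to \Bun_G$, and then asserts that t-excellence of $i_E$ (more generally of any map from a finite-type affine scheme to $\Bun_G$) follows by working smooth-locally; you simply spell out that smooth-local reduction on the source of $r_\beta$ explicitly, via the torsor trivialization and the atlas $U \to \Bun_G$. One small remark: the descent of t-excellence along the smooth surjection $\pi$ does not require re-running the C\v{e}ch totalization of \lemref{lem:smooth goodness}; it follows immediately from the facts that $\pi^![-d]$ is t-exact (part (4) of \lemref{lem:Dmod t-properties}) and that $\pi^!$ detects the t-structure on $\Dmod(\X)$ (part (3)), so the bound on the shift is uniform and the argument is actually simpler than for t-goodness.
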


\begin{proof}
Let $E$ be the $G$-bundle on $X$ determined by $\beta$ and consider the induced map $i_E: \pt \to \Bun_G(X)$. Since $E$ is by construction trivialized on $X^*$, it is clear that $r_\beta$ is the base-change of $i_{E}$ along the struture projection $\Gr_{G,x} \to \Bun_G$. 

Hence, it suffices to prove that $i_{E}: \pt \to \Bun_G$ is t-excellent.
More generally, it is easy to see that any map from $T \to \Bun_G$ from an affine scheme $T$ (of finite type) is t-excellent. In fact, by working smooth-locally, we reduce to the case of a map of affine schemes of finite type. 
\end{proof}

\ssec{The reductive case} \label{ssec:red case}

We now prove Theorem \ref{mainthm:Hecke} in the general case when $G$ is reductive but not necessarily semisimple. The argument is similar to the one above, the only difference is that we will need to puncture more points on $X$.

\sssec{}

As before, it suffices to check that, for any map $S \to \Bun_{G \times G}$ from an affine scheme $S$, the composition
\begin{equation}  \label{eqn:composite-map-2}
\Hecke_{G,x}^\glob \utimes_{\Bun_{G \times G}} S
\longto
\Hecke_{G,x}^\glob
\longto
\Hecke_{G,x}^\loc
\end{equation}
is t-good. 

\sssec{}

Again, denote by $E_1$ and $E_2$ the two $G$-bundles on $X_S$ classified by $S \to \Bun_{G \times G}$.
This time we use \cite[Theorem 2]{DS} and the fact that the topology of $X_S$ is generated by divisor complements. Then, by working \'etale-locally on $S$, we may assume that we are in the following situation.

There are a nonempty finite set $I$ and a map $\ul x: S \to X^I$ such that $E_1$ and $E_2$ are trivial on $X - D_{\ul x}$. Without loss of generality, we can also assume that one of the maps comprising $\ul x$ is the constant map with value $x$.

\sssec{}

Consider now the (finite) stratification of $X^I$ determined by the various diagonals:
$$
X^I \simeq \bigsqcup_{ q: I \tto J} X^{J, \disj}.
$$
By pulling back along $\ul x$, this induces a stratification of $S = \sqcup_q S_q$ and then a stratification
$$
\Hecke_{G,x}^\glob \utimes_{\Bun_{G \times G}} S
\simeq
\bigsqcup_{ q: I \tto J}
\Hecke_{G,x}^\glob \utimes_{\Bun_{G \times G}} S_q.
$$
Unraveling the construction, the restriction of $\ul x$ to $S_q$ consists of $|J|$ maps to $X$ that have disjoint graphs.

\sssec{}

Clearly, we can check that a map is t-good strata-wise on the source.
Hence after renaming $S_q$ with $S$, we are back to the map \eqref{eqn:composite-map-2}, but with the additional assumption that the two $G$-bundles are trivial on the complement of a disjoint union of graphs $ \bigcup_{j \in J}D_{x_j} \subset X_S$. Set $d := |J|-1$ and fix a bijection $J \simeq \{0, \ldots, d\}$ such that $D_{x_0}$ is the constant divisor $D_x$ associated to our point $x \in X$.
Note that $d = 0$ is allowed: in that case $D_{x}$ is the only divisor and the two $G$-bundles are trivial on $X^*_S$.

\sssec{}

Up to replacing $S = \Spec(R)$ with a suitable \'etale cover, we may assume that $E_1$ and $E_2$ have been trivialized on the formal tubular neighbourhoods of the $D_{x_j}$, for all $j \geq 0$. 
Then $E_1$ and $E_2$ are determined by $J$-tuples
$$
(\alpha_j) \in 
\prod_{j = 0 }^{d}
G(\bbK_{x_j})(R),
\hspace{.4cm}
(\beta_j) \in 
\prod_{j = 0 }^d
G(\bbK_{x_j})(R),
$$
respectively.

\sssec{}

Unraveling the constructions, we see that $\Hecke_{G,x}^\glob \utimes_{\Bun_{G \times G}} S$ is the $S$-indscheme that sends $f:\Spec(R') \to S$ to the set
$$
\Big\{
\phi: (X_S - \bigcup_{j \geq 0}D_{x_j}) \times_S S' \to G \; \Big | \; \mbox{  $\alpha_j \cdot \phi \cdot \beta_j \in G(\OO_{x_j})(R')$ for all $j \geq 1$}
\Big\}.
$$
In plain words: $\phi$ determines an isomorphism between the restrictions of $E_1$ and $E_2$ to $(X_S - \bigcup_{j \geq 0}D_{x_j}) \times_S S'$ and we require this isomorphism to extend across the $D_{x_1,S'}, \ldots, D_{x_d,S'}$ so as to yield an isomorphism on $(X^*)_{S'}$.

\sssec{}

From this point of view, the map 
$$
\Hecke_{G,x}^\glob \utimes_{\Bun_{G \times G}} S \to \Hecke_{G,x}^\loc
$$
is easily described: at the level of $S'$-points, it sends $\phi$ as above to $G(\OO_{x_0}) \alpha_0 \phi \beta_0 G(\OO_{x_0})$.

\sssec{}

Before proceeding, let us simplify the notation. 
Reasoning as in Section \ref{sssec:simplification}, we see that we can rename $R$ by $\kk$ and pretend that we are working over $\kk$ as usual. Thus, have disjoint $\kk$-points $x_1, \ldots, x_d$ in $X^* = X- \{x_0\}$. Setting $\Sigma = X - \{x_0, x_1, \ldots, x_d\}$, the map under investigation is the composition
$$
r':
\Y
\hto
G[\Sigma]
\xto{\phi \mapsto \alpha_0 \cdot \phi \cdot \beta_0}
\Hecke_{G,x}^{\loc},
$$
where $\Y$ is the closed sub-ind-scheme of $G[\Sigma]$ defined by
$$
\Y
:=
\Big\{
\phi \in G[\Sigma] \; | \; \mbox{  $\alpha_j \cdot \phi \cdot \beta_j \in G(\bbO_{x_j})$ for all $j \geq 1$}
\Big\}.
$$

\sssec{}

We wish to show that $r'$ is t-excellent. To this end, observe that $r': \Y \to  \Hecke_{G,x_0}^\loc$ is the base-change of 
$$
r'':
G[\Sigma] \longto \prod_{j \geq 0} \Hecke_{G,x_j}^\loc,
$$
$$
\phi 
\mapsto
(\alpha_j \cdot \phi \cdot \beta_j)_{j \geq 0}
$$
along the closed embedding $\Hecke_{G,x_0}^\loc \hto \prod_{j \geq 0} \Hecke_{G,x_j}^\loc$ that inserts the unit at $x_1, x_2, \ldots, x_d$.

\sssec{}

Since push-forwards along closed embeddings are right t-exact and conservative, it suffices to prove that $r''$ is t-excellent. This map factors as
$$
G[\Sigma]
 \xto{\; \phi \mapsto [\alpha_j \phi] \;}
  \prod_{j \geq 0} \Gr_{G,x_j}
  \xto{\; [g] \mapsto [g \beta_j] \;}
  \prod_{j \geq 0} \Gr_{G,x_j}
   \xto{\; \mathit{quotient} \; }
 \prod_{j \geq 0} \Hecke_{G,x_j}^\loc.
$$

\begin{rem}
To clarify our notation, following Section \ref{sssec:notation Whit}: we regard $\Gr_G = \GO \backsl \GK$; given $g \in \GK$, we write $[g] := \GO g \in \Gr_G$. Similarly for $\Gr_{G,x_j}$ and for a product of those. 
\end{rem}

\sssec{}

Now note that the pullback along the rightmost map is a product of $\oblv^{\GO}$: this is t-exact by the definition of the tensor product t-structure on $\otimes_j \Sph_{G,x_j}$. 
Furthermore, the map in the middle is an isomorphism, hence it is in particular t-excellent. Thus, it remains to show:

\begin{lem} 
For any nonempty $m$-tuple of $\kk$-points $(x_1, \ldots, x_m)$ of $X$ and any collection of $(\alpha_i) \in  \prod_{i=1}^m G(\bbK_{x_i})$, the map 
$$
G[\Sigma]
 \xto{\; \phi \mapsto [\alpha_i \phi] \;}
  \prod_{i \geq 0} \Gr_{G,x_i}
$$ 
is t-excellent.
\end{lem}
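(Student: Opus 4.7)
The strategy is precisely the one used in the semisimple case (Section \ref{ssec:ss case}): identify the map $\phi \mapsto [\alpha_i \phi]$ as the base-change of $i_E: \pt \to \Bun_G$ for an appropriate $G$-bundle $E$, and then invoke t-excellence of $i_E$. First, let $E$ be the $G$-bundle on $X$ obtained by gluing the trivial bundle on $\Sigma = X - \{x_1, \ldots, x_m\}$ with the trivial bundles on each formal disk $\widehat D_{x_i}$ via the transition functions $\alpha_i \in G(\bbK_{x_i})$ on the punctured formal disks. The tuple $(\alpha_i)$ yields a tautological trivialization of $E|_\Sigma$.

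Consider next the multi-point uniformization morphism $u: \prod_{i=1}^m \Gr_{G, x_i} \to \Bun_G$. Using Beauville-Laszlo, we reinterpret the source as the moduli of pairs $(E', \tau')$ where $E'$ is a $G$-bundle on $X$ and $\tau'$ is a trivialization of $E'$ over $\Sigma$, with $u$ sending such a pair to $E'$. The 2-fiber product $\pt \times_{i_E, \Bun_G, u} \prod_i \Gr_{G, x_i}$ classifies trivializations of $E|_\Sigma$, which is a $G[\Sigma]$-torsor; the tautological trivialization of the previous paragraph furnishes a base-point and identifies this torsor canonically with $G[\Sigma]$. A direct unravelling shows that, under this identification, the projection to $\prod_i \Gr_{G, x_i}$ is precisely the map $\phi \mapsto [\alpha_i \phi]$.

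By the argument at the end of Section \ref{ssec:ss case}, any morphism from an affine scheme of finite type to $\Bun_G$ is t-excellent; in particular so is $i_E$. Since t-excellence is preserved under base-change (the same appeal already invoked in Section \ref{ssec:ss case}), the map $\phi \mapsto [\alpha_i \phi]$ is t-excellent, as required. The main subtlety in the plan is the identification in the middle paragraph: one must carefully match Beauville-Laszlo conventions so that the projection from the 2-fiber product is indeed given by $\phi \mapsto [\alpha_i \phi]$ and not by some right-multiplication variant. The rest of the argument is a verbatim repetition of the semisimple case.
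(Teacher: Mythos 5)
Your proof is correct and follows the same route as the paper's: identify $\prod_i \Gr_{G,x_i}$ as pairs (bundle, trivialization on $\Sigma$), recognize the map $\phi \mapsto [\alpha_i\phi]$ as the base change of $i_{E_\alpha}:\pt\to\Bun_G$ along the uniformization map, and conclude by the already-established t-excellence of maps from affine finite-type schemes to $\Bun_G$. Your version merely spells out the Beauville--Laszlo identification and the $G[\Sigma]$-torsor structure more carefully than the paper's terse two lines.
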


\begin{proof}
The ind-scheme $\prod_{i \geq 0} \Gr_{G,x_i}$ classifies pairs $(E, \gamma)$ where $E$ is a $G$-bundle on $X$ and $\gamma$ a trivialization on $\Sigma$.
The given loop group elements determine a $G$-bundle that we call $E_\alpha$. 
In these terms, the displayed map is the base-change the map $\pt \to \Bun_G$ determined by $E_\alpha$. It remains to notice that, as we already discussed, the latter map is t-excellent.
\end{proof}

\ssec{The case of $X=\bbP^1$} \label{ssec:case of P1}

In this section, we prove that tempered objects and $*$-generated objects coincide in the special case of $X = \PP^1$.

\begin{thm}
The full subcategories 
$$
\Dmod(\Bun_G(\PP^1))^\temp \subseteq \Dmod(\Bun_G(\PP^1))
$$
$$
\Dmod(\Bun_G(\PP^1))^\stargen \subseteq \Dmod(\Bun_G(\PP^1))$$ 
coincide.
\end{thm}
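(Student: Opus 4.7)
The inclusion $\Dmod(\Bun_G(\PP^1))^\stargen \subseteq \Dmod(\Bun_G(\PP^1))^\temp$ is immediate from Theorem \ref{mainthm:star}, so only the reverse containment requires work. I would proceed by induction on the semisimple rank of $G$: the base case $G = T$ a torus is Example \ref{ex:abelian}, where both subcategories are all of $\Dmod(\Bun_T(\PP^1))$.

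The inductive step rests on the genus-zero vanishing $\Dmod(\Bun_G(\PP^1))^\cusp \simeq 0$ (true for non-abelian $G$). Via the Drinfeld-Gaitsgory cuspidal-vs.-Eisenstein analysis (see \cite{CT}), this vanishing forces $\Dmod(\Bun_G(\PP^1))$ to be generated under colimits by the essential images of the Eisenstein functors $\Eis_P \colon \Dmod(\Bun_M(\PP^1)) \to \Dmod(\Bun_G(\PP^1))$, indexed by proper parabolics $P \subsetneq G$ with Levi quotients $M$. For the argument to close, two properties of $\Eis_P$ are needed: first, that $\Eis_P$ commutes with the temperization functor (so that the temperization of an Eisenstein series agrees with the Eisenstein series of the temperization of its input), and second, that $\Eis_P$ takes $\Dmod(\Bun_M(\PP^1))^\stargen$ into $\Dmod(\Bun_G(\PP^1))^\stargen$. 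Granting these, every tempered $\F$ can be written as a colimit of objects $\Eis_P(\temp(\F_M))$ with $\temp(\F_M) \in \Dmod(\Bun_M(\PP^1))^\stargen$ by the induction hypothesis applied to the proper Levi $M$, and hence $\F$ itself lies in $\Dmod(\Bun_G(\PP^1))^\stargen$.

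The second property is the one that depends on the genus-zero hypothesis: for each connected component $\Bun_P^\theta$ of $\Bun_P(\PP^1)$, the structural projection $p^\theta \colon \Bun_P^\theta \to \Bun_G$ has quasi-compact image in $\Bun_G$, a consequence of the rigidity of Harder-Narasimhan polygons on $\PP^1$. Thus, the Eisenstein of a $*$-extension from a QC open of $\Bun_M$ decomposes as a sum over $\theta$ of objects supported in QC opens of $\Bun_G$, each of which manifestly belongs to $\Dmod(\Bun_G(\PP^1))^\stargen$.

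The hardest step will likely be the first property, namely the compatibility of $\Eis_P$ with temperization. Since the source $\Dmod(\Bun_M)$ does not carry a natural $\Sph_G$-action, this compatibility is not a formal consequence of $\Sph_G$-linearity and must be argued either via the \emph{enhanced} Eisenstein functor landing in the $\Sph_G$-linear category $I(G,P)$ discussed in the lead-up to Theorem \ref{mainthm:aut-gluing}, or through a spectral-side parabolic-induction analysis ensuring that the temperization of an Eisenstein series is controlled by its Levi input. A secondary technical point is verifying the precise sense in which ``Eisenstein from proper parabolics generates $\Dmod(\Bun_G(\PP^1))$'' --- much weaker than a direct-sum decomposition, but enough to run the colimit-theoretic argument above.
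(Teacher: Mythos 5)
Your route diverges substantially from the paper's and has a genuine unfilled gap. The paper's argument is much more direct: for the containment $\Dmod(\Bun_G(\PP^1))^\temp \subseteq \Dmod(\Bun_G(\PP^1))^\stargen$ it simply quotes from \cite{omega} that the tempered category $\ltemp\Dmod(\Bun_G(\PP^1))$ is generated, under the Hecke action and colimits, by the \emph{single} object $j_*(\omega_{\pt/G})$, where $j\colon \pt/G \hookrightarrow \Bun_G(\PP^1)$ is the open locus of trivial bundles. Since this generator is itself a $*$-extension from a quasi-compact open, and since convolving with a compact object of $\Sph_G$ carries $*$-extensions to $*$-extensions, the containment follows immediately. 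There is no induction on rank, no cuspidal/Eisenstein decomposition, and crucially no need for any compatibility of $\Eis_P$ with temperization.

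The gap in your proposal is exactly the step you flag as hardest, the claimed commutation $\temp \circ \Eis_P \simeq \Eis_P \circ \temp_M$, and I don't believe your suggested workarounds close it as stated. The functor $\Eis_P$ is $\Sph_M$-linear, so it commutes with $\Sph_M$-temperization, i.e.\ with the temperization on $\Dmod(\Bun_G)$ induced by restricting along $\Sat\colon \Sph_M \to \Sph_G$; but what you need in your colimit argument is the $\Sph_G$-temperization $\temp_G$ on the target, and these two projections are \emph{a priori} different. Passing to $\Eis_P^\enh \colon I(G,P) \to \Dmod(\Bun_G)$ gives you $\Sph_G$-linearity, but then your induction hypothesis --- which concerns $\Dmod(\Bun_M(\PP^1))^\stargen$ and the $\Sph_M$-action --- no longer plugs in: you would have to compare $\temp$ on $I(G,P)$ with $\temp_M$ on $\Dmod(\Bun_M)$ through the conservative but non-$\Sph_G$-linear forgetful functor $\oblv_{G,P}$, and that translation is precisely the nontrivial content you were trying to avoid. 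The cuspidal-vanishing-implies-Eisenstein-generation input and your quasi-compactness observation about HN strata on $\PP^1$ are both fine and in the right spirit, but the argument does not close without supplying the missing compatibility, which is a substantive statement that would itself require proof or a citation you have not identified.
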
 

\begin{proof}
Theorem \ref{mainthm:star} yields the inclusion 
$$
\Dmod(\Bun_G(\PP^1))^\stargen \subseteq \Dmod(\Bun_G(\PP^1))^\temp.
$$ 
To prove the opposite inclusion, recall (\cite{omega}) that tempered objects on $\Bun_G(\PP^1)$ are generated by one single object under the Hecke action and colimits. This object is the $*$-extension $j_*(\omega_{\pt/G})$, where $j: \pt/G \hto \Bun_G$ is the open embedding of the locus of trivial $G$-bundles. It remains to show that for any compact object $\G \in \Sph_G$, the result of the Hecke action $\G \star j_*(\omega_{\pt/G})$ is a $*$-extension, too. This is straightforward.
\end{proof}

\begin{rem}
In the above proof, we appealed to the general proof of Theorem \ref{mainthm:star}. This is an over-kill: for a direct proof, it suffices to follow the argument of \cite[Theorem 3.1.9]{omega}.
\end{rem}

\begin{rem}[This can be skipped by the reader]
The inclusion $\Dmod(\Bun_G(\PP^1))^\temp \hto \Dmod(\Bun_G(\PP^1))$ admits a continuous right adjoint, hence the same must hold true for the inclusion
$$
\Dmod(\Bun_G(\PP^1))^\stargen \hto \Dmod(\Bun_G(\PP^1)).
$$
Let us determine the right adjoint explicitly. We claim that it is given by
$$
\M \mapsto \lim (j_U)_* (j_U)^* (\M),
$$
where the limit is taken in $\Dmod(\Bun_G(\PP^1))^\stargen$.
Note that the inclusion $\Dmod(\Bun_G(\PP^1))^\stargen \hto \Dmod(\Bun_G(\PP^1))$ does not preserve limits. 
\end{rem}

\sec{Tempered objects for $G=SL_2$} \label{sec:SL2}

Our goal in this section is two-fold: we prove Theorem \ref{mainthm:rank1} and at the same time prepare the stage for the essential surjectivity part of the automorphic gluing theorem.
Specifically, we will we provide a convenient characterization of the anti-tempered objects of $\Sph_G$ and of $\Dmod(\Gr_G)$ in the special case $G = SL_2$.

\ssec{The Kostant slice}

By abuse of notation\footnote{This object should more properly denoted by $\omega_{\Hecke_{G}^\loc}$, but we find the notation $\omega_{\Sph_G}$ less heavy.}, let 
$$
\omega_{\Sph_G} \in \Sph_G \simeq \Dmod(\Hecke_{G}^\loc)
$$
denote the dualizing sheaf of $\Hecke_{G}^\loc$. 
We begin with the study of $\omega_{\Sph_G}$ on the spectral side of derived Satake: the statements of the present section are valid for any group $G$, we will specialize to $G=SL_2$ in the next section (Section \ref{ssec:SL2}).

\begin{rem}

We already know that $\omega_{\Sph_G}$ is anti-tempered: to see this, it suffices to check that $\oblv^\GO(\omega_{\Sph_G}) \simeq \omega_{\Gr_G}$ is infinitely connective, and this is one of the main results of \cite{omega}.

\end{rem}

\sssec{}

We wish to describe the object that corresponds to $\omega_{\Sph_G}$ under derived Satake and Koszul duality, i.e. under the equivalences%
\footnote{
We refer the reader to \cite[Section 4]{omega} for an extensive discussion of the right equivalence, and in particular of the shearing operation $(- )^\Rightarrow$.
}
\begin{equation} \label{eqn:Satake and KD}
\Sph_G 
\simeq
\ICoh_{\Nch}(\Omega \gch/\Gch)
\simeq
\ICoh
\bigt{
(\gch/G)^\wedge_{\Nch/\Gch}
}^\Rightarrow.
\end{equation}
As we now explain, the answer already appears in \cite{BF}.

\sssec{}

We need to recall the \emph{renormalized derived Satake} equivalence of \cite[Section 12]{AG1}.
Define $\Sph_G^{\on{loc-cpt}} \subseteq \Sph_G$ to be the non-cocomplete DG category consisting of those $\F \in \Sph_G$ whose underlying object $\oblv^\GO(\F) \in \Dmod(\Gr_G)$ is compact. Next, let $\Sph_G^{ren}$ be the ind-completion of $\Sph_G^{\on{loc-cpt}}$. It turns out that the natural  functor $\Psi_{\ren}: \Sph_G^{\ren} \to \Sph_G$ is essentially surjective and that it is equipped with a fully faithful left adjoint $\Xi_{\ren}$. We have:

\begin{thm}[Renormalized derived Satake, \cite{AG1, BF}] \label{thm:ren-Sat}
There is a monoidal equivalence of DG 
$$
\Sat_G^{\ren}:
\ICoh
\bigt{
\gch/\Gch
}^\Rightarrow
\xto{\; \; \simeq \; \; }
\Sph_G ^\ren
$$
compatible with \eqref{eqn:Satake and KD} in the sense that the following diagram is commutative:
\begin{equation} 
\nonumber
\begin{tikzpicture}[scale=1.5]
\node (00) at (0,0) {$ \ICoh
\Bigt{
(\gch/G)^\wedge_{\Nch/\Gch}
}^\Rightarrow$};
\node (10) at (3.2,0) {$ \Sph_G$.};
\node (01) at (0,1.2) {$ \ICoh
\bigt{
\gch/\Gch
}^\Rightarrow$};
\node (11) at (3.2,1.2) {$\Sph_G^\ren$};
\path[->,font=\scriptsize,>=angle 90]
(00.east) edge node[above] {$\Sat_G$}  (10.west); 
\path[->,font=\scriptsize,>=angle 90]
(01.east) edge node[above] {$\Sat_G^\ren$} (11.west); 
\path[->>,font=\scriptsize,>=angle 90]
(01.south) edge node[right] {$\mathit{restrict}$} (00.north);
\path[->>,font=\scriptsize,>=angle 90]
(11.south) edge node[right] {$\Psi_\ren$} (10.north);
\end{tikzpicture}
\end{equation}
\end{thm}

\begin{rem}
Since $\gch/\Gch$ is smooth, one could replace the two occurrences of $\ICoh$ in the above diagram with $\QCoh$. However, we prefer the ind-coherent formulation because of its better functoriality properties: see below for a manifestation of these in practice. Another (vaguely related) example is the calculation of the Serre functor of $\ICoh
\bigt{
(\gch/G)^\wedge_{\Nch/\Gch}
}^\Rightarrow$ in \cite[Section 4]{omega}.
\end{rem}

\sssec{}

Fix a principal $\mathfrak{sl}_2$-triple $(e,h,f)$ in $\gch$, with $e \in \check{\fn}$ and $f \in \check{\fn}^-$. This yields the \emph{Kostant slice} $\Kos := e + \fz(f) \subseteq \gch$.
Consider the correspondence
\begin{equation} \label{eqn:Kostant correspo}
\gch/\Gch 
\xleftarrow{\;\; \wt \kappa \;\;} 
\Kos
\xto{\;\; \pi_{\Kos} \;\;}
\pt.
\end{equation}
The map $\wt \kappa$ is $\Gm$-equivariant for a particular $\Gm$-action on $\Kos$, discussed for instance in \cite[Section 3]{VLaff}.
In particular, we can apply the shearing to $\QCoh(\Kos)$ and to the functors $\wt \kappa^*$ and $(\pi_\Kos)_*$.

\sssec{}

Denote by $(p_!)^\ren : \Sph_G^{\ren} \to \Vect$ the unique continuous functor that restricts to $p_!$ on $\Sph_G^{\on{loc-cpt}}$. Thanks to \cite[Theorem 4]{BF}, we know that $(p_!)^\ren $ corresponds under Theorem \ref{thm:ren-Sat} to the pull-push
$$
\QCoh(\gch/\Gch)^\Rightarrow
\xto{ \;\; \wt \kappa ^* \;\;}
\QCoh(\Kos)^\Rightarrow
\xto{\;\; (\pi_\Kos)_*}
\Vect
$$
along the correspondence \eqref{eqn:Kostant correspo}. As mentioned above, we prefer a formulation in terms of ind-coherent sheaves. So, we precompose the above chain with the canonical equivalence $\ICoh(\gch/\Gch)^\Rightarrow \simeq \QCoh(\gch/\Gch)^\Rightarrow$ induced by $\Upsilon_{\gch/\Gch}$.

\begin{lem}
Under renormalized Satake, the functor $(p_!)^\ren : \Sph_G^{\ren} \to \Vect$ corresponds to the functor
$$
\ICoh(\gch/\Gch)^\Rightarrow
\xto{ \;\; \wt \kappa ^! \;\;}
\ICoh(\Kos)^\Rightarrow
\xto{\;\; (\pi_\Kos)_*^\ICoh [- \dim(\Kos)]}
\Vect
$$
along the correspondence \eqref{eqn:Kostant correspo}.\end{lem}

\begin{proof}
It suffices to prove that $(\pi_\Kos)_*^\ICoh \circ\Upsilon_\Kos \simeq (\pi_\Kos)_* [\dim (\Kos)]$. Since $\Kos$ is isomorphic to an affine space, this is a simple calculation.
\end{proof}

\sssec{}

We are interested in the functor $p_!: \Sph_G \to \Vect$, which is left adjoint to the functor $\Vect \to \Sph_G$ determined by $\omega_{\Sph_G}$.
By construction, $p_!$ arises as the composition 
$$
\Sph_G \xto{\Xi_{\ren}} \Sph_G^{\ren} \xto{(p_!)^\ren} \Vect. 
$$
Hence, on the Langlands-Koszul dual side, this corresponds to the functor induced by the diagram
$$
(\gch/\Gch)^\wedge_{\Nch/\Gch} 
\to
\gch/\Gch 
\xleftarrow{\;\; \wt \kappa \;\;} 
\Kos
\xto{\;\; \pi_{\Kos} \;\;}
\pt.
$$
A simple base-change computation, together with the fact that the Kostant slice intersects $\Nch$ only in $e$, yields the following result.

\begin{cor}
Under derived Satake and Koszul duality, the functor $p_!: \Sph_G \to \Vect$ corresponds to the composition
\begin{equation} \label{eqn:p!-on-spectral-side}
\ICoh\Bigt{ (\gch/\Gch)^\wedge_{\Nch/\Gch}}^\Rightarrow
\xto{ \;\; \kappa ^! \;\;}
\ICoh(\Kos^\wedge_e)^\Rightarrow
\xto{\;\; \beta_*^\ICoh [- \dim(\Kos)]}
\Vect
\end{equation}
induced by the natural maps
$$
(\gch/\Gch)^\wedge_{\Nch/\Gch} 
\xleftarrow{\;\; \kappa \;\;} 
\Kos^\wedge_e
\xto{\;\; \beta \;\;}
\pt.
$$
\end{cor}

\begin{prop}
Under derived Satake and Koszul duality, $\omega_{\Sph_G}$ corresponds to the object
$$
\kappa_*^\ICoh (\omega_{\Kos^\wedge_e}) 
\in 
\ICoh( (\gch/\Gch)^\wedge_{\Nch/\Gch} )^\Rightarrow.
$$
\end{prop}

\begin{proof}
By taking the right adjoint of \eqref{eqn:p!-on-spectral-side}, we see that $\omega_{\Sph}$ must correspond to the object
$$
(\kappa^!)^R \bigt{
\omega_{\Kos^\wedge_e}
}[\dim(\Kos)].
$$
Here we used the fact that $\beta$ is a nil-isomorphism (see \cite[Volume II, Chapter 3.3]{Book}), and so $\beta^! \simeq (\beta_*^\ICoh)^R$. Thus, it remains to prove that
$$
(\kappa^!)^R \simeq \kappa_*^\ICoh [-\dim(\Kos)].
$$
To this end, we will provide a functorial isomorphism
\begin{equation} \label{eqn:Kos-funct}
\HHom_{\ICoh(\Kos^\wedge_e)^\Rightarrow} \Bigt{ \kappa^!(\F), \G }
\simeq
\HHom_{\ICoh( (\gch/\Gch)^\wedge_{\Nch/\Gch} )^\Rightarrow} \Bigt{ \F, \kappa_*^\ICoh( \G ) } [-\dim(\Kos)]
\end{equation}
for $\F \in \ICoh( (\gch/\Gch)^\wedge_{\Nch/\Gch} )^\Rightarrow$ and $\G \in \ICoh(\Kos^\wedge_e)^\Rightarrow$. As the shearings do not play a role, we omit them from now on.

Consider the tautological nil-isomorphism
$$
\phi:
\Nch/\Gch
\longto
(\gch/\Gch)^\wedge_{\Nch/\Gch}.
$$
The general theory of ind-coherent sheaves on formal completions and nil-isomorphisms (\cite{Book}) yields an adjunction
\begin{equation}
\nonumber
\begin{tikzpicture}[scale=1.5]
\node (a) at (0,1) {$\phi_*^\ICoh: \ICoh(\Nch/\Gch) $};
\node (b) at (3.5,1) {$\ICoh
\Bigt{
(\gch/\Gch)^\wedge_{\Nch/\Gch}
}:
\phi^!$};
\path[->,font=\scriptsize,>=angle 90]
([yshift= 1.5pt]a.east) edge node[above] {$ $} ([yshift= 1.5pt]b.west);
\path[->,font=\scriptsize,>=angle 90]
([yshift= -1.5pt]b.west) edge node[below] {$ $} ([yshift= -1.5pt]a.east);
\end{tikzpicture}
\end{equation}
with conservative right adjoint (equivalently, the essential image of $\phi_*^\ICoh$ generates the target under colimits). Thus, in \eqref{eqn:Kos-funct} we may assume that $\F \simeq \phi_*^\ICoh(\M)$ for some $\M \in \ICoh(\Nch/\Gch)$: 
\begin{equation} 
\nonumber
\HHom_{\ICoh(\Kos^\wedge_e)^\Rightarrow} \Bigt{ \kappa^!(\phi_*^\ICoh(\M)), \G }
\simeq
\HHom_{\ICoh( (\gch/\Gch)^\wedge_{\Nch/\Gch} )^\Rightarrow} \Bigt{ \phi_*^\ICoh(\M), \kappa_*^\ICoh( \G ) } [-\dim(\Kos)].
\end{equation}
The compositions $\phi_*^\ICoh \circ \kappa^!$ and $\phi^! \circ \kappa_*^\ICoh$ simplify by base-change: since $\Kos$ intersects the nilpotent cone transversally, we obtain that 
$$
\Kos^\wedge_e
\utimes_{(\gch/\Gch)^\wedge_{\Nch/\Gch}}
{\Nch/\Gch}
\simeq
e,
$$
where by abuse of notation $e$ denotes the point scheme $\{e\} \subset \gch$.
Denoting by $\alpha: e \to \Nch/\Gch$ and $\iota: e \to \Kos^\wedge_e$ the obvious maps, it remains to provide a functorial isomorphism
\begin{equation}
\nonumber
\HHom_{\Vect} 
\Bigt{ 
\alpha^!(\M), 
\iota^!(\G) }
\simeq
\HHom_
{\Vect} 
\Bigt{ 
\alpha^{*,\ICoh}(\M),
\iota^!(\G)
} [-\dim(\Kos)].
\end{equation}
We will provide an isomorphism
$$
\alpha^! 
\simeq
\alpha^{*,\ICoh}[\dim(\Kos)].
$$
This follows immediately from \cite[Lemma 2.3.6]{omega}, together with the identity $\dim(\Gch) = \dim(\Kos) + \dim(\Nch)$.
\end{proof}

\begin{rem}
Note that $\kappa$ obviously lands in $(\gch^\times /\Gch)^\wedge_{\Nch^\times/\Gch}$. This is yet another proof of the fact that $\omega_{\Sph_G}$ is anti-tempered.
\end{rem}

\ssec{Anti-tempered objects of $\Sph_{SL_2}$} \label{ssec:SL2}

In this section, we discuss some special features of the $G=SL_2$ case. We first prove Theorem \ref{thm:antitemp Sph SL2}, which shows that $\omega_{\Sph_G}$ generates $\Sph_G^{\antitemp}$ in a strong sense. We will then record some general consequences concerning the anti-tempered subcategory of $\Dmod(\Gr_{SL_2})$.

\sssec{}

For arbitrary $G$, denote by $\bbone_{\Sph}^{\antitemp} \in \Sph_G$ the \emph{anti-tempered unit} of $\Sph_G$. By definition, this is the image of the monoidal unit $\bbone_{\Sph_G}$ under the projection $\Sph_G \tto \Sph_G^\antitemp$.

Now assume $G = SL_2$ until further notice.

\begin{thm} \label{thm:antitemp Sph SL2}
For $G=SL_2$, the anti-tempered unit $\bbone_{\Sph}^{\antitemp} \in \Sph_G$ can be expressed as a cone of two shifted copies of $\omega_{\Sph_G}$. More precisely,
$$
\bbone_{\Sph}^{\antitemp} \simeq 
\ker
\Bigt{
\omega_{\Sph_G} \to \omega_{\Sph_G}[2]
}.
$$
\end{thm}

\begin{proof}
By derived Satake and Koszul duality, we have: 
$$
 \Sph_G ^\antitemp
\simeq 
\ICoh
\Bigt{ (\gch/\Gch)^\wedge_{\Nch^\times/\Gch}
}
^\Rightarrow.
$$
Our first goal is to express the right-hand-side as the DG category of modules over an algebra. We work without the shearing and turn it on only at the end.
Similarly to the previous proof, consider the formal completion $(\gch/\Gch)^\wedge_{\Nch^\times/\Gch}$ and the tautological nil-isomorphism
$$
\psi:
\Nch^\times/\Gch
\longto
(\gch/\Gch)^\wedge_{\Nch^\times/\Gch}.
$$
Again, we obtain an adjunction
\begin{equation}
\nonumber
\begin{tikzpicture}[scale=1.5]
\node (a) at (0,1) {$\psi_*^\ICoh: \ICoh(\Nch^\times/\Gch) $};
\node (b) at (3.5,1) {$\ICoh
\Bigt{
(\gch/\Gch)^\wedge_{\Nch^\times/\Gch}
}:
\psi^!$};
\path[->,font=\scriptsize,>=angle 90]
([yshift= 1.5pt]a.east) edge node[above] {$ $} ([yshift= 1.5pt]b.west);
\path[->,font=\scriptsize,>=angle 90]
([yshift= -1.5pt]b.west) edge node[below] {$ $} ([yshift= -1.5pt]a.east);
\end{tikzpicture}
\end{equation}
with conservative right adjoint.

The monad of the adjunction is the universal envelope of the relative tangent Lie algebroid attached to the map $\Nch^\times/\Gch \hto \gch^\times/\Gch$. As in \cite[Lemma 4.3.10]{omega}, this monad is the functor of tensoring with the exterior algebra $\Sym(\Tang_{\fc_{\Gch},0}[-1])$: this follows from the isomorphism
$$
\Nch^\times/\Gch 
\simeq
 \gch^\times/\Gch
\utimes_{\fc_\Gch} 0,
$$
where $\fc_{\Gch}$ is Chevalley's space.

Now let us use the fact that $\Gch \simeq PGL_2$. In this case, the above exterior algebra is $1$-dimensional: we denote it by $\kk[\eta']$, with $\eta'$ in cohomological degree $-1$. The reason for the primed notation will be clear later on, when we reinsert the shearing.

Next, note that the $\Gch$ acts on $\Nch^\times$ transitively and that the stabilizer of $e \in \Nch^\times$ equals $\GG_a$. This implies that $\Nch^\times/\Gch \simeq \pt/\GG_a$, from which it follows that
$$
\ICoh(\Nch^\times/\Gch)
\simeq
\QCoh(\Nch^\times/\Gch)
\simeq
\QCoh(\pt/\GG_a)
\simeq \kk[\epsilon']\mod,
$$
with $\epsilon'$ a generator of cohomological degree $1$. We normalize this equivalence by declaring that $\omega_{\Nch^\times/\Gch} \in \ICoh(\Nch^\times/\Gch)$ corresponds to the regular $\kk[\epsilon']$-module.

\medskip

All in all, we obtain an equivalence 
$$
L':
\ICoh
\Bigt{ (\gch/\Gch)^\wedge_{\Nch^\times/\Gch}
} 
\simeq 
\kk[\epsilon', \eta'] \mod.
$$
which sends $\psi_*^\ICoh (\omega_{\Nch^\times/\Gch})$ to the regular $\kk[\epsilon', \eta']$-module.
We can now turn on the shearing: following the grading described in the proof of \cite[Lemma 4.3.10]{omega}, we obtain an equivalence
$$
L:
\ICoh
\Bigt{ (\gch/\Gch)^\wedge_{\Nch^\times/\Gch}
} ^\Rightarrow
\simeq 
\kk[\epsilon, \eta] \mod,
$$
where $\epsilon$ and $\eta$ have now cohomological degree $-1$.

\medskip

Next, let us express $\bbone_{\Sph}^{\antitemp} \in \Sph_G$ and $\omega_{\Sph_G}$ in terms of this equivalence. 
The derived Satake equivalence is monoidal, hence $\bbone_{\Sph}^{\antitemp}$ corresponds to the dualizing sheaf in $\ICoh
\bigt{ (\gch/\Gch)^\wedge_{\Nch^\times/\Gch}
}^\Rightarrow$.
One easily checks that, under $L$, this objects goes over to the $\kk[\epsilon, \eta]$-module $\kk[\epsilon]$.
On the other hand, we saw that $\omega_{\Sph_G}$ corresponds to the push-forward of the dualizing sheaf on the Kostant slice. As in the previous proof, the slice intersects the nilpotent cone transversally and so
$$
\Kos^\wedge_e
\utimes_{(\gch/\Gch)^\wedge_{\Nch^\times/\Gch}}
{\Nch^\times/\Gch}
\simeq
e.
$$
This proves that, under $L$ and derived Satake, $\omega_{\Sph_G}$ goes over to the augmentation $\kk[\epsilon, \eta]$-module $\kk$.
Since $\kk[\epsilon] \simeq \ker(\kk \to \kk[2])$ as $\kk[\epsilon, \eta]$-modules, we conclude that $\bbone_{\Sph_G}^\antitemp$ can be expressed in terms of $\omega_{\Sph_G}$ as claimed.
\end{proof}

\sssec{}

Let us collect some consequences of the above statement and its proof. The first consequence is a general fact, valid for any $\C$ acted on by $\Sph_{SL_2}$; the second and third are specific to the DG category $\Dmod(\Gr_{SL_2})$.

\begin{cor}
Let $G=SL_2$ and consider a DG category $\C$ equipped with an action of $\Sph_G$. Then the full subcategory $\lantitemp \C$ coincides with the cocompletion of the essential image of the functor $\omega_{\Sph_G} \star - : \C \to \C$.
In particular, an object $c \in \C$ is tempered iff $\omega_{\Sph_G} \star c \simeq 0$. 
\end{cor}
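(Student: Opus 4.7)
I plan to deduce both claims directly from Theorem \ref{thm:antitemp Sph SL2}, combined with the general fact that the anti-temperization functor $\C \tto \lantitemp \C$ is implemented, at the level of objects of $\C$, by convolution with the anti-tempered unit $\bbone_{\Sph_G}^\antitemp$.

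For the first assertion, the inclusion of the cocompletion of the essential image of $\omega_{\Sph_G} \star -$ into $\lantitemp \C$ is immediate, since $\omega_{\Sph_G}$ is itself anti-tempered (as recalled at the start of the section) and $\lantitemp \C$ is stable under colimits in $\C$. For the reverse inclusion, I would use that any object $d \in \lantitemp \C$ satisfies $d \simeq \bbone_{\Sph_G}^\antitemp \star d$, since it is already anti-tempered. Invoking Theorem \ref{thm:antitemp Sph SL2}, $\bbone_{\Sph_G}^\antitemp$ fits into a finite exact triangle in $\Sph_G$ whose other two terms are (shifts of) copies of $\omega_{\Sph_G}$; convolving this triangle with $d$ and using the exactness and continuity of $- \star d$ exhibits $d$ as a finite colimit of shifts of $\omega_{\Sph_G} \star d$, which manifestly lies in the cocompletion of the essential image of $\omega_{\Sph_G} \star -$.

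For the second assertion, the \emph{only if} direction will follow from Corollary \ref{cor:key} together with the identification $\Sph_G^\antitemp = \Sph_G^{\leq -\infty}$ from Corollary \ref{cor:antitemp=t-nil}: if $c$ is tempered, then every anti-tempered object of $\Sph_G$ acts on $c$ by zero, and $\omega_{\Sph_G}$ is among those. For the \emph{if} direction, assuming $\omega_{\Sph_G} \star c \simeq 0$, I would convolve the exact triangle of Theorem \ref{thm:antitemp Sph SL2} with $c$; this forces $\bbone_{\Sph_G}^\antitemp \star c \simeq 0$, which is precisely the vanishing of the anti-temperization of $c$, i.e. $c$ is tempered.

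There is no substantial obstacle: all the nontrivial content is already packaged into Theorem \ref{thm:antitemp Sph SL2}, and the corollary is essentially a formal consequence obtained by transporting the exact triangle through the action functor $- \star c : \Sph_G \to \C$, which is exact and colimit-preserving for any fixed $c$.
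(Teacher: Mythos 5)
Your proof is correct and matches what the paper intends: the corollary is stated without proof precisely because it follows formally from Theorem~\ref{thm:antitemp Sph SL2} by the argument you give, namely that anti-temperization is given by convolution with $\bbone_{\Sph_G}^\antitemp$, and that the finite triangle from the theorem transports along the exact, colimit-preserving functor $- \star d \colon \Sph_G \to \C$.
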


\begin{cor} \label{cor:antitemp Gr}
Let $G=SL_2$. An object $ \F \in \Dmod(\Gr_G)$ is anti-tempered iff it is of the form 
$$
\coker (\omega_{\Gr_G} \otimes V \xto{\beta} \omega_{\Gr_G} \otimes V')
$$
for some $V, V' \in \Vect$ and some morphism $\beta$ in $\Dmod(\Gr_G)$. 
\end{cor}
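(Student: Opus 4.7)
The plan is to deduce the corollary from \thmref{thm:antitemp Sph SL2} together with an explicit computation of convolution by $\omega_{\Sph_G}$ on $\Dmod(\Gr_G)$.

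The key lemma to establish is: for any $\F \in \Dmod(\Gr_G)$, one has $\omega_{\Sph_G} \star \F \simeq \omega_{\Gr_G} \otimes V_\F$ for some $V_\F \in \Vect$ depending functorially on $\F$. The intuition is that convolution against $\omega_{\Sph_G}$ is an integral transform on $\Dmod(\Gr_G)$ whose kernel is (a shift of) $\omega_{\Gr_G \times \Gr_G}$, and any such transform factors through $\Vect$ via $V_\F \simeq (p_{\Gr_G})_! \F$. To formalize this I would use the convolution diagram $\Gr_G \xleftarrow{m_1} \GK \times^{\GO} \Gr_G \xto{m_2} \Gr_G$, observe that $\omega_{\Sph_G}$ pulls back to the dualizing sheaf of $\GK \times^{\GO} \Gr_G$ along the $\GO$-quotient, and carry out the resulting base change via the identification $\GK \times^{\GO} \Gr_G \simeq \Gr_G \times \Gr_G$.

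Granted the key lemma, the forward direction follows: for $\F$ anti-tempered, the anti-temperization is the identity, so $\F \simeq \bbone^\antitemp_{\Sph_G} \star \F$. \thmref{thm:antitemp Sph SL2} (and its proof) provides a fiber sequence $\omega_{\Sph_G}[-1] \to \bbone^\antitemp_{\Sph_G} \to \omega_{\Sph_G}$ in $\Sph_G$, translating the $\kk[\epsilon,\eta]$-module sequence $\kk[-1] \to \kk[\epsilon] \to \kk$. Convolving with $\F$ and applying the key lemma presents $\F$ as the fiber of a map $\omega_{\Gr_G} \otimes V \to \omega_{\Gr_G} \otimes V'$, which up to a degree shift absorbed into $V'$ has the required cokernel form. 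The converse is easier: $\omega_{\Gr_G}$ is anti-tempered because it coincides with $\oblv^{\GO}(\omega_{\Sph_G}) \simeq \omega_{\Sph_G} \star \delta_{[1]}$, which lies in the essential image of $\omega_{\Sph_G} \star -$ and hence in $\Dmod(\Gr_G)^\antitemp$ by the preceding corollary; anti-temperedness is then preserved under tensoring with vector spaces and forming cokernels.

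The main obstacle is the key lemma: the heuristic is clean but the rigorous computation requires care with the infinite-dimensional $\GO$-quotient and the ind-scheme structure of $\GK \times^{\GO} \Gr_G$. An alternative route, should the direct approach become tangled, is to transport the statement to the spectral side via derived Satake and Koszul duality exactly as in the proof of \thmref{thm:antitemp Sph SL2}; there the analogous computation of $\omega_{\Sph_G} \star \F$ reduces to a transparent tensor-product calculation in $\kk[\epsilon,\eta]\mod$, since the spectral avatar of $\omega_{\Sph_G}$ is the augmentation module $\kk$.
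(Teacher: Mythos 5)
Your proposal is correct and matches the paper's own argument essentially step for step: both deduce the statement from Theorem~\ref{thm:antitemp Sph SL2} via the identity $\F \simeq \bbone_{\Sph_G}^{\antitemp} \star \F$, both establish the same key computation $\omega_{\Sph_G} \star \F \simeq \omega_{\Gr_G} \otimes (p_{\Gr_G})_!(\F)$ by noting that the convolution correspondence splits under $\GK \times^{\GO} \Gr_G \simeq \Gr_G \times \Gr_G$, and both close the converse by observing that $\omega_{\Gr_G}$ is anti-tempered and that anti-temperedness is stable under tensoring by $\Vect$ and cones. The only cosmetic difference is that you make the fiber sequence $\kk[-1] \to \kk[\epsilon] \to \kk$ (and the ensuing shift bookkeeping) explicit where the paper leaves it implicit.
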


\begin{proof}
Since $\omega_{\Gr_G}$ is anti-tempered, so is any object of the prescribed form. Now suppose that $\F$ is anti-tempered.
This implies that 
$$
\F 
\simeq
\bbone_{\Sph_G}^\antitemp
\star \F
\simeq
\ker
\bigt{
\omega_{\Sph_G} \star \F
\to
\omega_{\Sph_G}\star \F[2]
}. 
$$
By construction, $\omega_{\Sph_G} \star \F$ is obtained by $!$-pulling and $!$-pushing $\F$ along the following correspondence:
$$
\Gr_G 
\xleftarrow{proj}
\Gr_G \times^\GO \GK
\xto{\act}
\Gr_G
$$ 
$$
\GO g' \mapsfrom [ \GO g,g'] \mapsto \GO g g'.
$$ 
Now observe that this correspondence \virg{splits}, that is, it is isomorphic to the product correspondence $\Gr_G \times \Gr_G \rr \Gr_G$. Hence, 
$$
\omega_{\Sph_G} \star \F 
\simeq 
\omega_{\Gr_G} \otimes (p_{\Gr_G})_!(\F)
$$
and our assertion follows.
\end{proof}

\begin{cor} \label{cor:temp on Gr-SL2}
For $G=SL_2$, an object $\F \in \Dmod(\Gr_G)$ is tempered iff $(p_{\Gr_G})_!(\F) \simeq 0$.
\end{cor}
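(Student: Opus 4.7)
The plan is to bootstrap directly off the two previous corollaries (the one giving $\lantitemp\C$ in terms of $\omega_{\Sph_G}$, and Corollary~\ref{cor:antitemp Gr}), both of which are consequences of Theorem~\ref{thm:antitemp Sph SL2}. First, I would apply the general characterization: for any DG category $\C$ acted on by $\Sph_{SL_2}$, an object $c \in \C$ is tempered if and only if $\omega_{\Sph_G} \star c \simeq 0$. Specialized to $\C = \Dmod(\Gr_G)$, this reduces the desired \textit{iff} to identifying $\omega_{\Sph_G} \star \F$ with an expression involving only $(p_{\Gr_G})_!(\F)$.

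The key identification is already extracted inside the proof of Corollary~\ref{cor:antitemp Gr}: the convolution correspondence
\[
\Gr_G \xleftarrow{\;\mathit{proj}\;} \GK \times^{\GO} \Gr_G \xrightarrow{\;\act\;} \Gr_G
\]
that computes $\omega_{\Sph_G} \star \F$ is, at the level of the underlying D-module on $\Gr_G$, isomorphic to the product correspondence $\Gr_G \times \Gr_G \rr \Gr_G$. Pulling $\F$ back and pushing $\omega_{\Gr_G}$ forward through this split correspondence yields the clean formula
\[
\omega_{\Sph_G} \star \F \;\simeq\; \omega_{\Gr_G} \otimes (p_{\Gr_G})_!(\F).
\]
So the task becomes: show $\omega_{\Gr_G} \otimes (p_{\Gr_G})_!(\F) \simeq 0$ iff $(p_{\Gr_G})_!(\F) \simeq 0$.

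Finally, one direction is obvious, and the other follows because $\omega_{\Gr_G}$ is a nonzero object of $\Dmod(\Gr_G)$ while the action of $\Vect$ is just degree-wise tensoring; thus $\omega_{\Gr_G} \otimes V \simeq 0$ forces $V \simeq 0$ in $\Vect$. Stringing the three steps together gives the desired equivalence. There is no genuine obstacle here: all the nontrivial input is Theorem~\ref{thm:antitemp Sph SL2} (which provides the convenient anti-tempered generator $\omega_{\Sph_G}$ in the $SL_2$ case) together with the splitting observation already recorded above; the present corollary is essentially a restatement of Corollary~\ref{cor:antitemp Gr} under the passage from anti-tempered to tempered.
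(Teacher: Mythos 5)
Your proof is correct, and it rests on exactly the same two ingredients as the paper's: Theorem~\ref{thm:antitemp Sph SL2} and the splitting $\omega_{\Sph_G} \star \F \simeq \omega_{\Gr_G} \otimes (p_{\Gr_G})_!(\F)$ extracted in the proof of Corollary~\ref{cor:antitemp Gr}. The only difference is one of packaging: the paper verifies the left-orthogonality condition $\HHom(\F,\A)\simeq 0$ for every anti-tempered $\A$, using the presentation of such $\A$ as a (co)fiber of $V\otimes\omega_{\Gr_G}\to V'\otimes\omega_{\Gr_G}$ together with the adjunction $(p_{\Gr_G})_! \dashv (p_{\Gr_G})^!$, whereas you invoke the convolution criterion \virg{$\F$ tempered iff $\omega_{\Sph_G}\star\F\simeq 0$} and then compute the left-hand side directly. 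One step you gloss over is the implication $\omega_{\Gr_G}\otimes V\simeq 0 \Rightarrow V\simeq 0$: this is not a formal consequence of $\omega_{\Gr_G}\neq 0$ in an arbitrary $\Vect$-module category, though it is immediate here (e.g.\ $i_0^!$ at a closed point $i_0\in\Gr_G$ is continuous and $\Vect$-linear with $i_0^!(\omega_{\Gr_G})$ a shift of $\kk$, so $i_0^!(\omega_{\Gr_G}\otimes V)\simeq V[d]$; alternatively apply $(p_{\Gr_G})_!$ and use that $H^{\BM}_*(\Gr_G)\otimes V\simeq 0$ forces $V\simeq 0$ over a field). The paper's route avoids needing this because the vanishing $\ker(0\to 0)\simeq 0$ is automatic.
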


\begin{proof}
The only if direction (valid for any $G$) is simply the fact that $\omega_{\Gr_G}$ is anti-tempered. 
Now suppose that $(p_{\Gr_G})_!(\F) \simeq 0$. We showed above that any anti-tempered object $\A \in \Dmod(\Gr_G)$ can be written as 
$$
\A \simeq \ker (V \otimes \omega_{\Gr_G} \to V' \otimes \omega_{\Gr_G}).
$$
It follows immediately that 
$$
\HHom(\F, \A)
 \simeq 
\ker 
\bigt{
(\HHom((p_{\Gr_G})_!(\F) , V) \to (\HHom((p_{\Gr_G})_!(\F), V') 
} \simeq 0,
$$
a vanishing that proves the temperedness of $\F$.
\end{proof}

\ssec{Proof of Theorem \ref{mainthm:rank1}}

Here we use the results proven above to deduce that any $\F \in \Dmod(\Bun_{SL_2})$ with $(p_{\Bun_G})_!(\F) \simeq 0$ is tempered.

\sssec{}

Given a scheme $Y$, thought of as target, let us define following \citep{Barlev} the prestack $Y[X]^\rat$ of \emph{rational maps from $X$ to $Y$}. To an affine scheme $S$, it assigns the set of equivalence classes $(U, f: U \to Y)$, where $U \subseteq X_S$ is an open subscheme that is universally dense over $S$. The equivalence relation is defined by declaring $(U,f) \approx (V,g)$ if $f=g$ on $U \cap V$.

We do not require $X$ to be proper and in fact we will use this definition for $X^* = X -x$. As for the target, we will take the Borel subgroup $B \subset G$. In this case, $B[X^*]^\rat$ is a \emph{group} prestack. Moreover, it is equipped with a group morphism $B[X^*]^\rat \to \BK \to  \GK$ and thus it acts on $\Gr_{G,x}$.

\begin{lem}
When $G$ is semisimple, we have
$$
\Dmod(\Gr_{G,x})^{G[X^*]}
\simeq
\Dmod(\Bun_G),
$$
as well as
$$
\Dmod(\Gr_{G,x})^{B[X^*]^\rat}
\simeq
\Dmod(\Bun_G^{B\ggen}).
$$
\end{lem}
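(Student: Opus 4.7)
The plan is to prove both equivalences by descent along the natural projection from $\Gr_{G,x}$, using the uniformization of $\Bun_G$ that was already invoked in Section \ref{ssec:ss case}.

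For the first equivalence, consider the map $\pi: \Gr_{G,x} \to \Bun_G$ sending a pair $(E,\sigma)$—where $E$ is a $G$-bundle on $X$ and $\sigma$ is a trivialization of $E$ on $X^* := X-x$—to $E$. Since $G$ is semisimple, the Drinfeld--Simpson theorem (invoked earlier in the paper via \cite{DS}) ensures that every $S$-family of $G$-bundles on $X$ admits a trivialization on $X^*_S$ after base-change along a suitable étale cover $S' \to S$. Consequently $\pi$ is an étale-locally trivial $G[X^*]$-torsor, and the first equivalence follows by D-module descent.

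For the second equivalence, I would factor the projection as
\[
\Gr_{G,x} \longrightarrow B[X^*]^\rat \backslash \Gr_{G,x} \longrightarrow G[X^*] \backslash \Gr_{G,x} \simeq \Bun_G,
\]
and identify the middle quotient with $\Bun_G^{B\ggen}$. Unwinding definitions, a point of $\Bun_G^{B\ggen}$ should be a $G$-bundle $E$ on $X$ together with a reduction of $E$ to $B$ on a universally dense open subset. Given $(E,\sigma) \in \Gr_{G,x}$, the trivialization $\sigma$ provides, in particular, a generic $B$-reduction of $E$—namely the one induced by the trivial $B$-reduction of the trivial bundle on $X^*$. Two pairs $(E,\sigma)$ and $(E,\sigma')$ give rise to the same generic $B$-reduction if and only if $\sigma \circ (\sigma')^{-1}$ lies generically in $B$, i.e., defines an element of $B[X^*]^\rat$. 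This yields the desired isomorphism of prestacks, and the equivalence of D-module categories then follows by descent once more.

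The main obstacle is that $B[X^*]^\rat$ is not a group scheme of finite type but rather an infinite-type group prestack, so one must be careful that the relevant D-module descent makes sense and enjoys the expected formal properties. This formalism is worked out in Barlev's thesis \cite{Barlev} (see also \cite{shvcatHH} for a complementary viewpoint): $B[X^*]^\rat$ is presented as a filtered colimit, indexed by divisors $D \subset X^*$, of the group ind-schemes of $B$-valued maps regular on $X^* - D$, which reduces matters to descent statements for honest ind-schematic group actions. Taking invariants under the colimit is then compatible with D-module categories, delivering the equivalence.
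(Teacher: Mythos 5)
Your proof is correct and takes essentially the same approach as the paper: both proofs rest on Drinfeld--Simpson (\cite{DS}) for \'etale-local triviality and then conclude by D-module descent along the projection from $\Gr_{G,x}$. The paper packages the argument slightly differently: rather than describing the quotient prestack $B[X^*]^\rat \backslash \Gr_{G,x}$ at the level of points, it states directly that both maps $\Gr_{G,x} \to \Bun_G$ and $\Gr_{G,x} \to \Bun_G^{B\ggen}$ are \'etale surjections (the latter also via \cite{DS}), and then checks the two relevant Cech fiber products identify with $\Gr_{G,x} \times G[X^*]$ and $\Gr_{G,x} \times B[X^*]^\rat$ respectively. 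These are really the same observation: your "two pairs give the same generic $B$-reduction iff they differ by $B[X^*]^\rat$" is precisely the identification of the fiber product. One small point worth emphasizing: your sentence "This yields the desired isomorphism of prestacks" is slightly imprecise --- the naive quotient prestack only agrees with $\Bun_G^{B\ggen}$ after \'etale sheafification, and what you actually need (and are implicitly using) is that the map $\Gr_{G,x} \to \Bun_G^{B\ggen}$ is an \'etale surjection, which you invoked via Drinfeld--Simpson only for the map to $\Bun_G$. This is the second half of what the paper's citation to \cite{DS} is doing. Your closing paragraph on the infinite-type nature of $B[X^*]^\rat$ is a reasonable elaboration that the paper leaves implicit, since $\Dmod$ is defined for arbitrary prestacks and Cech descent goes through.
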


\begin{proof}
The maps $\Gr_{G,x} \to \Bun_G^{B\ggen}$ and $\Gr_{G,x} \to \Bun_G$ are \'etale-surjections by \cite{DS}. Hence, $\Dmod(\Bun_G^{B\ggen})$ is equivalent to the totalization of the cosimplicial DG category obtained from the Cech complex of $\Gr_{G,x} \to \Bun_G^{B\ggen}$ by applying the contravariant functor $\Dmod(-)$. 
It remains to observe that 
$$
\Gr_{G,x} \utimes_{\Bun_G^{B\ggen}} \Gr_{G,x}
\simeq
\Gr_{G,x} \times {B[X^*]^\rat}
$$
$$
\Gr_{G,x} \utimes_{\Bun_G } \Gr_{G,x}
\simeq
\Gr_{G,x} \times {G[X^*]}
$$
and similarly for the higher iterated fiber products forming the Cech complex.
\end{proof}

\sssec{}

Now recall that $\Dmod(\Gr_{G,x})$ is a bimodule for the left\footnote{
This is because we are seeing $\Gr_{G,x}$ as the quotient of the left action of $\GO$ on $\GK$. For clarity, in the present discussion, we will place the decoration \virg{$\antitemp$} on the left.} action of $\Sph_{G}$ and the right action of the loop group $\GK$ at $x$. (This holds true for any reductive group $G$.) 
Then the above lemma implies that, for $G$ semisimple,
$$
\lantitemp
\Dmod(\Bun_G)
\simeq
\lantitemp
\Bigt{ 
\Dmod(\Gr_{G,x})^{G[X^*]}
}
\simeq
\Bigt{ 
\lantitemp
\Dmod(\Gr_{G,x}) }^{G[X^*]}.
$$

\sssec{}

Next, consider the forgetful functor
$$
\oblv^{G[X^*]}:
\Dmod(\Gr_{G,x})^{G[X^*]}
\longto
\Dmod(\Gr_{G,x}).
$$
Under the equivalence $\Dmod(\Bun_G) \simeq \Dmod(\Gr_{G,x})^{G[X^*]}$, this functor goes over to the $!$-pullback along the map $\pi: \Gr_{G,x} \to \Bun_G$.
Clearly, $\oblv^{G[X^*]}$ is  $\Sph_G$-linear and thus it restricts to a forgetful functor
$$
\oblv^{G[X^*]}:
\lantitemp
\Dmod(\Bun_G)
\longto
\lantitemp
\Dmod(\Gr_{G,x}).
$$

\begin{lem}
In the case $G=SL_2$, the latter functor admits a left adjoint given by the restriction of $\pi_!$ to the anti-tempered subcategories.
\end{lem}

\begin{proof}
Recall that the projection $\Dmod(\Bun_G) \tto \lantitemp\Dmod(\Bun_G)$ left adjoint to the natural inclusion is always well-defined: this projection is the anti-temperization functor of \eqref{eqn:exact-seq Sat-C}. By formal nonsense, the left adjoint in question is given (if it exists) by the composition
$$
\lantitemp
\Dmod(\Gr_{G,x})
\hto
\Dmod(\Gr_{G,x})
\xto{\;\;\pi_! \;\;}
\Dmod(\Bun_G)
\tto
\lantitemp\Dmod(\Bun_G).
$$
It remains to show that $\pi_!$ is well-defined on the entire $\lantitemp
\Dmod(\Gr_{G,x})$ and that it lands in $\lantitemp
\Dmod(\Bun_G)$. In view of Corollary \ref{cor:antitemp Gr}, the DG category $\lantitemp
\Dmod(\Gr_{G,x})$ is generated by objects of the form $\omega_{\Gr_G} \otimes V$, with $V \in \Vect$. In particular $\pi_!$ is well-defined by ind-holonomicity. Moreover, 
$$
\pi_! (\omega_{\Gr_G}) 
\simeq 
\Av^{G[X^*]}_! \circ \oblv^{G[X^*]} (\omega_{\Bun_G})
\simeq
\omega_{\Bun_G}
\otimes
H_*(G[X^*]).
$$
The anti-temperedness of the latter object is known by \cite[Theorem A]{omega}.
\end{proof}

\sssec{}

Since 
$$
\oblv^{G[X^*]}:
\lantitemp
\Dmod(\Bun_G)
\longto
\lantitemp
\Dmod(\Gr_{G,x})
$$
is conservative by construction, the essential image of its left adjoint generates $\lantitemp
\Dmod(\Bun_G)$ under colimits. It follows that any anti-tempered object of $\Dmod(\Bun_G)$ arises as a colimit of objects of the form $(p_{\Bun_G})^!(V)$.

\sssec{}

Mutatis mutandis, the above arguments renders to the case of $\Bun_G^{B\ggen}$. In that case observe that, in view of the contractibility of $B[X^*]^\rat$, we have
$$
\Av^{B[X^*]^\rat}_! \circ \oblv^{B[X^*]^\rat} (\omega_{\Bun_G^{B\ggen}})
\simeq
\omega_{\Bun_G^{B\ggen}}.
$$
Thus, we have proved the first part of the following theorem.

\begin{thm} \label{thm:omegas}
For $G = SL_2$, let $\Y = \Bun_G$ or $\Y = \Bun_G^{B \ggen}$. Then
\begin{itemize}
\item
the DG category $\lantitemp\Dmod(\Y)$ is the cocompletion of the essential image of $(p_\Y)^!: \Vect \to \Dmod(\Y)$;
\item
the DG category $\ltemp\Dmod(\Y)$ coincides with the full subcategory $\ker \bigt{ (p_\Y)_!} \subseteq \Dmod(\Y)$.
\end{itemize}

\end{thm}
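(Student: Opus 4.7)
The first bullet has been established in the preceding paragraphs: for $\Y = \Bun_G$ via the explicit computation using the left adjoint $\pi_!$ of $\oblv^{G[X^*]}$, and for $\Y = \Bun_G^{B\ggen}$ by the parallel \emph{mutatis mutandis} argument, which crucially uses the contractibility of $B[X^*]^\rat$ to ensure $\pi_!(\omega_{\Gr_G}) \simeq \omega_{\Bun_G^{B\ggen}}$. So only the second bullet remains, and the plan is to reduce it to the already-proven analog on the affine Grassmannian, namely Corollary \ref{cor:temp on Gr-SL2}.

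Let $\pi: \Gr_{G,x} \to \Y$ denote the natural projection. By \cite{DS}, $\pi$ is an \'etale surjection, and the corresponding forgetful functor $\pi^! \simeq \oblv^{G[X^*]}$ (respectively $\oblv^{B[X^*]^\rat}$ when $\Y = \Bun_G^{B\ggen}$) is $\Sph_G$-linear and conservative. Consequently, $\F \in \Dmod(\Y)$ is tempered if and only if $\pi^!(\F) \in \Dmod(\Gr_{G,x})$ is tempered, which by Corollary \ref{cor:temp on Gr-SL2} holds if and only if $(p_{\Gr_{G,x}})_!(\pi^!(\F)) \simeq 0$. Next, I would apply the projection formula to $\pi$, regarded \'etale-locally as a trivial $F$-bundle with $F = G[X^*]$ or $F = B[X^*]^\rat$: this yields $\pi_! \pi^!(\F) \simeq \F \otimes H_*(F)$, whence
$$
(p_{\Gr_{G,x}})_!(\pi^!(\F))
\simeq
(p_\Y)_!(\pi_! \pi^!(\F))
\simeq
(p_\Y)_!(\F) \otimes H_*(F).
$$
Since $H_*(F)$ is a nonzero object of $\Vect$ --- equal to $\kk$ in the $B[X^*]^\rat$ case by contractibility, and nontrivial in the $G[X^*]$ case by \cite{omega} --- tensoring by $H_*(F)$ is a conservative operation on $\Vect$. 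We therefore conclude $(p_\Y)_!(\F) \simeq 0$ if and only if $\F$ is tempered, as desired.

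The main subtlety I anticipate is justifying the projection formula for the infinite-dimensional torsor $\pi$: its fibers are ind-schemes of a rather wild sort, not schemes of finite type. The justification should proceed by combining the \'etale-local triviality of $\pi$ with the standard projection formula in the trivialized picture, and then gluing via \'etale descent on $\Y$.
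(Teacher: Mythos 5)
Your proof is correct and takes essentially the same approach as the paper: reduce to $\Gr_{G,x}$ via the conservative $\Sph_G$-linear functor $\pi^!$, apply Corollary \ref{cor:temp on Gr-SL2}, and compute $(p_{\Gr_{G,x}})_! \pi^!(\F) \simeq (p_\Y)_!(\F) \otimes H_*(F)$. The paper's displayed computation only establishes the implication $(p_\Y)_!(\F) \simeq 0 \Rightarrow \F$ tempered (the converse being immediate from the first bullet), whereas you obtain both directions at once from the conservativity of $- \otimes H_*(F)$; the projection-formula subtlety you flag for the infinite-dimensional torsor is indeed used by the paper as well, phrased as $\Av_!^{F}\circ\oblv^{F}(-)\simeq(-)\otimes H_*(F)$, without further elaboration.
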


\begin{proof}

It remains to prove the second statement. Let us first assume that $\Y = \Bun_G$. We already know that tempered objects belong to the kernel of $(p_{\Bun_G})_!$. So, let $\F \in \Dmod(\Bun_G)$ be such that $(p_{\Bun_G})_!(\F) \simeq 0$.  We wish to show that $\F$ is tempered. 
By the conservativity and $\Sph_G$-linearity of $(\pi_{G,x})^!$, it suffices to show that $(\pi_{G,x})^!(\F)$ is tempered.
In turn, by Corollary \ref{cor:temp on Gr-SL2}, it suffices to show that 
$$
(p_{\Gr_G})_! (\pi_{G,x})^!(\F) = 0.
$$ 
We have:
\begin{eqnarray}
\nonumber
(p_{\Gr_G})_! (\pi_{G,x})^!(\F) 
& \simeq &
(p_{\Bun_G})_! 
\bigt{
(\pi_{G,x})_! 
(\pi_{G,x})^!(\F)
}
\\
\nonumber
& \simeq &
(p_{\Bun_G})_! 
\bigt{
\F \otimes H_*(G[X^*])
}
\\
\nonumber
& \simeq &
(p_{\Bun_G})_! (\F) \otimes  H_*(G[X^*])
\\
\nonumber
& \simeq &
0.
\end{eqnarray}
The case of $\Y = \Bun_G^{B \ggen}$ works in the same manner.
\end{proof}

\begin{rem}
We can now give another proof of Theorem \ref{mainthm:star}, valid only for $SL_2$. Namely, we start with the following result, proven in \cite{DL}: for $G$ arbitrary, $(p_{\Bun_G})_!(\F) \simeq 0$ whenever $\F \in \Dmod(\Bun_G)^\stargen$. Now, let us combine this with the result proven above: $\F \in \Dmod(\Bun_{SL_2})$ is tempered if and only if $(p_{\Bun_{SL_2}})_!(\F) \simeq 0$.  
\end{rem}

\sec{Automorphic gluing for $G=SL_2$} \label{sec:aut-gluing}

\ssec{The statement} \label{ssec:aut-gluing-statement}

Let $G$ be a connected reductive group and $P$ a parabolic subgroup.
Following \cite{Outline}, let us recall the definition of the DG category $I(G,P)$.

\sssec{}

First, we have the prestack $\Bun_G^{P \ggen}$, see \citep{Barlev}, and the corresponding DG category of D-modules $\Dmod(\Bun_G^{P \ggen})$.
We have maps
$$
\Bun_P 
\longto 
\Bun_G^{P \ggen} 
\xto{\;\; \p_P^\gen \;\;}
\Bun_G.
$$
The pullback functor along the second map is fully faithful: this is one of the main theorems of \cite{Barlev}, based on \cite{contract}. 
The pullback functor along the left map is conservative: this is because the two prestacks involved have the same field-valued points.

\sssec{}

Consider also the usual map $\q_P: \Bun_P \to \Bun_M$, where $M$ is the Levi quotient of $P$. It is clear that $\q_P^{*}: \Dmod(\Bun_M) \to \Dmod(\Bun_P)$ is well-defined and fully faithful. 

Define $I(G,P)$ to be the full subcategory of $\Dmod(\Bun_G^{P \ggen})$ spanned by objects whose restriction to $\Bun_P$ lies in the essential image of $\q_P^{*}$. In formulas:
$$
I(G,P):= 
\Dmod(\Bun_G^{P \ggen})
\utimes_{\Dmod(\Bun_P)}
\Dmod(\Bun_M).
$$

\begin{rem}
It is immediate to see that the dualizing sheaf $\omega_{\Bun_G^{P\ggen}} \in \Dmod(\Bun_G^{P\ggen})$ actually belongs to $I(G,P)$.
\end{rem}

\begin{lem}
The structure inclusion $I(G,P)\hto \Dmod(\Bun_G^{P \ggen})$ admits a continuous right adjoint, which we denote by $\Av^{U(\AA)}$. 
\end{lem}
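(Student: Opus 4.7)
The plan is to realize $I(G,P) \hookrightarrow \Dmod(\Bun_G^{P\ggen})$ as a coreflective inclusion, obtained by base change from the coreflective inclusion $\q_P^{*,\dR}$.

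First I would note that, since $\q_P^{*,\dR}$ is a fully faithful continuous left adjoint whose right adjoint $(\q_P)_{*,\dR}$ is itself continuous (as $\q_P$ is smooth), the essential image of $\q_P^{*,\dR}$ is a coreflective subcategory of $\Dmod(\Bun_P)$, closed under colimits in the ambient category, with continuous coreflector given by the idempotent comonad $T := \q_P^{*,\dR}(\q_P)_{*,\dR}$.

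Next, by the pullback description, $I(G,P)$ consists of those $\F \in \Dmod(\Bun_G^{P\ggen})$ for which the counit $T(i^{*,\dR}\F) \to i^{*,\dR}\F$ is an equivalence, where $i : \Bun_P \to \Bun_G^{P\ggen}$ denotes the natural map. Since $i^{*,\dR}$ is continuous and the essential image of $T$ is closed under colimits in $\Dmod(\Bun_P)$, this full subcategory is closed under colimits in $\Dmod(\Bun_G^{P\ggen})$; hence the inclusion preserves colimits and, by the adjoint functor theorem for presentable $\infty$-categories, admits a right adjoint $\Av^{U(\AA)}$.

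The main task is to show that $\Av^{U(\AA)}$ is itself continuous. I would invoke the following base-change principle in $\DGCatcont$: the pullback of a coreflective inclusion with continuous coreflector, along an arbitrary continuous functor, is again a coreflective inclusion with continuous coreflector. Applied to our pullback square, with $\q_P^{*,\dR}$ as the right vertical arrow, this yields continuity of $\Av^{U(\AA)}$. The main obstacle is verifying this abstract base-change fact; the cleanest route is to tensor the colocalization sequence
\[
\Dmod(\Bun_M) \rightleftarrows \Dmod(\Bun_P) \rightleftarrows \mathrm{Im}(\q_P^{*,\dR})^\perp
\]
over $\Dmod(\Bun_P)$ with the $\Dmod(\Bun_P)$-module $\Dmod(\Bun_G^{P\ggen})$ (via $i^{*,\dR}$), using that the relative tensor product in $\DGCatcont$ preserves colocalization sequences.
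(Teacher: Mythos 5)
The paper states this lemma without proof, deferring implicitly to \cite{Outline} and \cite{Barlev}, so there is no internal argument to compare against; I assess your proposal on its own terms. The first step — closure of $I(G,P)$ under colimits, hence existence of a right adjoint by the adjoint functor theorem — is correct, but note that the input you actually need there is only that the essential image of $\q_P^{*,\dR}$ is closed under colimits, which already follows from $\q_P^{*,\dR}$ being fully faithful and colimit-preserving; continuity of $(\q_P)_{*,\dR}$ is not needed for that step. In any case your justification for that continuity, ``as $\q_P$ is smooth,'' is not a valid reason: smoothness of a map of stacks does not imply continuity of de Rham pushforward. The correct input, when needed, is safety (unipotent automorphism groups along the fibers of $\q_P$) together with the relevant finiteness conditions, as in \cite{DG-cptgen} and \cite{finiteness}.

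The serious gap is the tensor-product step. The relative tensor product $\Dmod(\Bun_M) \otimes_{\Dmod(\Bun_P)} \Dmod(\Bun_G^{P\ggen})$ presupposes a $\Dmod(\Bun_P)$-module structure on $\Dmod(\Bun_G^{P\ggen})$, but $i^{*,\dR}$ points from $\Dmod(\Bun_G^{P\ggen})$ \emph{to} $\Dmod(\Bun_P)$: that datum makes $\Dmod(\Bun_P)$ a $\Dmod(\Bun_G^{P\ggen})$-module, not the other way around, and there is no map of prestacks $\Bun_G^{P\ggen} \to \Bun_P$ supplying the structure you need. So the colocalization sequence cannot be tensored over $\Dmod(\Bun_P)$ as written, and the abstract ``base-change principle'' you invoke is not a theorem at that level of generality (the preimage of a coreflective subcategory under an arbitrary continuous functor need not have a \emph{continuous} coreflector). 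To make an argument of this shape go through, you would need to know that $i^{*,\dR}$ admits a continuous left adjoint — say because $i\colon \Bun_P \to \Bun_G^{P\ggen}$ is ind-schematic and ind-proper or ind-affine — which would let you convert the fiber product of categories into a pushout along left adjoints and then transport the colocalization; that geometric input is precisely what your proposal omits, and is the real content of the lemma.
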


\begin{lem}
The obvious $\Sph_G$-action on $\Dmod(\Bun_G^{P \ggen})$ preserves $I(G,P)$.
\end{lem}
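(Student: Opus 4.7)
The plan is to reduce the lemma to the $\Sph_G$-linearity of the endofunctor $\iota \circ \Av^{U(\AA)}$ of $\Dmod(\Bun_G^{P\ggen})$, where $\iota: I(G,P) \hookrightarrow \Dmod(\Bun_G^{P\ggen})$ is the inclusion and $\Av^{U(\AA)}$ its right adjoint from the previous lemma. Granting this linearity, the stability of $I(G,P)$ is formal: if $\F \in I(G,P)$, i.e. $\iota \Av^{U(\AA)}(\F) \simeq \F$, then
$$
\K \star \F \simeq \K \star \iota \Av^{U(\AA)}(\F) \simeq \iota \Av^{U(\AA)}(\K \star \F) \in I(G,P).
$$

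First, I would spell out the Hecke action geometrically via the correspondence
$$
\Bun_G^{P\ggen} \xleftarrow{\overleftarrow{h}} \Hecke_{G,x}^{\glob, P\ggen} \xrightarrow{\overrightarrow{h}} \Bun_G^{P\ggen},
$$
where $\Hecke_{G,x}^{\glob, P\ggen} := \Bun_G^{P\ggen} \utimes_{\Bun_G(X-x)} \Bun_G^{P\ggen}$ is equipped with the restriction map $r$ to $\Hecke_{G,x}^\loc$, and the action is given by $\K \star \F \simeq (\overrightarrow{h})_{*,\ren}\bigt{r^!(\K) \sotimes (\overleftarrow{h})^!(\F)}$, in direct analogy with the formula for $\Bun_G$ itself. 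This is well-posed because the datum of a generic $P$-reduction is unaffected by a Hecke modification at the single point $x$.

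Second, I would give a geometric description of $\iota \circ \Av^{U(\AA)}$ as a pull–push along a correspondence associated to the stack of "$P$-bundles with a generic $U$-untwisting", or equivalently to the fibers of $\q_P: \Bun_P \to \Bun_M$ propagated back to $\Bun_G^{P\ggen}$. The decisive geometric observation is that this correspondence is supported at (a neighbourhood of) the generic point of $X$, while the Hecke correspondence is supported at the single point $x$. The two correspondences are therefore transverse and their associated endofunctors commute, which is exactly the $\Sph_G$-linearity of $\iota \circ \Av^{U(\AA)}$ that we need.

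The main obstacle is turning this transversality picture into a rigorous base-change statement in our setting of non-quasi-compact stacks and ind-schematic legs requiring renormalized $*$-pushforward. Once the relevant cartesian squares are identified and the base-change isomorphism is verified on generators (reducing, for instance, to objects of $\Sph_G$ whose underlying D-module on $\Gr_G$ is compact, where the $\ren$-issues evaporate), the commutativity of the two correspondences yields the lemma.
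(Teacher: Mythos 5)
The paper offers no proof of this lemma: it is stated as a recollection from \cite{Outline}, together with the adjacent lemma on $\Av^{U(\AA)}$, so there is no argument in the paper for a direct comparison. I will therefore assess your sketch on its own terms.

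Your governing intuition is the right one and matches the adelic analogies the paper itself records in the introduction: the $\Sph_G$-action convolves on the "left" at the single point $x$, while the condition cutting out $I(G,P) \approx \Fun(G(\OO)\backsl G(\AA)/U(\AA)M(\bbF_X))$ is a "right" $U(\AA)$-invariance supported generically on $X$; these live at different places and should commute. The problem is that your sketch does not yet turn this into a proof. Three points.

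First, the reduction to $\Sph_G$-linearity of $\iota \circ \Av^{U(\AA)}$ is formally \emph{equivalent} to the statement being proved, not a simplification: a colocalization is linear for an ambient monoidal action precisely when its essential image is preserved. This is not a logical circularity (you propose to prove linearity by an independent geometric argument), but you should be aware that no difficulty has been discharged by this step.

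Second, the fiber product $\Bun_G^{P\ggen}\utimes_{\Bun_G(X-x)}\Bun_G^{P\ggen}$ is not the Hecke correspondence: as written, the two factors carry independent generic $P$-reductions, and only the underlying $G$-bundles are identified over $X-x$, so pushing along the second projection would integrate over an extra, spurious direction. The correct prestack is $\Hecke_{G,x}^{\glob}\utimes_{\Bun_G}\Bun_G^{P\ggen}$, with the second projection to $\Bun_G^{P\ggen}$ defined via the observation you yourself state, that a generic $P$-reduction transports across a modification at $x$. This is fixable, but the current formula is wrong.

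Third, and most seriously, you never say what the correspondence computing $\iota\circ\Av^{U(\AA)}$ actually is. The functor $\Av^{U(\AA)}$ is defined abstractly as a right adjoint to a fully faithful inclusion; whether it is geometric in the pull--push sense you need is itself a non-obvious claim, and without an explicit correspondence the word "transversality" has no content and the base-change you allude to cannot even be formulated. You acknowledge this gap; it is not a technical loose end but the entire content of the lemma. As it stands, the proposal is a plausible plan with the central step missing.
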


\sssec{}

Define $\CT_P^\enh: \Dmod(\Bun_G) \to I(G,P)$ to be the composition $\CT_P^\enh:= \Av^{U(\AA)} \circ (\p_P^{\gen})^!$. 
The lemma above guarantees that $\CT_P^\enh$ is $\Sph_G$-linear.

Thus, it makes sense to consider $I(G,P)^\temp$. Since $\CT_P^\enh$ (like any other $\Sph_G$-linear functor) commutes with temperization, the square
\begin{equation}  \label{diag:CT-temp}
\begin{tikzpicture}[scale=1.5]
\node (00) at (0,0) {$ \Dmod(\Bun_G)^\temp$};
\node (10) at (2.5,0) {$ I(G,P)^\temp$.};
\node (01) at (0,1) {$ \Dmod(\Bun_G)$};
\node (11) at (2.5,1) {$I(G,P)$};
\path[->,font=\scriptsize,>=angle 90]
(00.east) edge node[above] {$\CT_P^\enh$}  (10.west); 
\path[->,font=\scriptsize,>=angle 90]
(01.east) edge node[above] {$\CT_P^\enh$} (11.west); 
\path[->>,font=\scriptsize,>=angle 90]
(01.south) edge node[right] {$\temp$} (00.north);
\path[->>,font=\scriptsize,>=angle 90]
(11.south) edge node[right] {$\temp$} (10.north);
\end{tikzpicture}
\end{equation}
is commutative.

\sssec{}

The above discussion yields the $\Sph_G$-linear functor
$$
\gamma_{G,P}:
\Dmod(\Bun_G)
\longto
\Dmod(\Bun_G)^\temp
\utimes_{I(G,P)^\temp}
I(G,P).
$$
By construction, it sends $\F \mapsto (\temp(\F), \CT_P^\enh(\F))$, the latter pair equipped with the obvious gluing datum.
Our goal is to prove that $\gamma_{G,B}$ is an equivalence for $G=SL_2$.

\sssec{}

For future use, let us note that $\CT_P^{\enh}$ admits a $\Sph_G$-linear left adjoint $\Eis_P^\enh$. This is the \emph{enhanced Eisenstein series functor}, arising as the composition
$$
I(G,P) 
\hto 
\Dmod(\Bun_G^{P\ggen})
\xto{(\p_P^\gen)_!}
\Dmod(\Bun_G).
$$
It follows that the four arrows of \eqref{diag:CT-temp} all admit left adjoints.

\ssec{Deligne-Lusztig duality}

We digress to record an application of Theorem \ref{mainthm:star} to the Deligne-Lusztig duality functor $\DL_G$. Here $G$ denotes an arbitrary connected reductive group.

\sssec{} \label{ssec:DL-definition}

First, we recall that $\DL_G$ is the endofunctor of $\Dmod(\Bun_G)$ given by the formula
$$
\coker
\Bigt{
\colim_{P \in \Par'}
\Eis_P^\enh \CT_P^\enh
\longto
\id_
{\Dmod(\Bun_G)}
}.
$$
By construction, $\DL_G$ is $\Sph_G$-linear.
We invite the reader to consult \cite{DW, Wang, LinChen, DL} for more information on the functor $\DL_G$.

\sssec{}

Let us now recall Chen's theorem, see \cite{LinChen}. This theorem, valid for any $G$, expresses $\DL_G$ in completely different terms via a composition of two dualities: for any $G$, we have
$$
\DL_G \simeq \TBunG [2 \dim(\Bun_G) + \dim(Z_G)],
$$
where $\TBunG$ is by definition the composition of 
$$
\Psid_*: \Dmod(\Bun_G)^\vee \to \Dmod(\Bun_G)
$$
with the inverse of 
$$
\Psid_!: \Dmod(\Bun_G)^\vee \to \Dmod(\Bun_G).
$$ 
The reader might recall that $\Psid_*$ appeared in Section \ref{sssec:Psid*}: it is the functor given by the kernel $\Delta_*(\omega_{\Bun_G})$. Similarly, $\Psid_!$ is the functor given by the Verdier-dual kernel. Contrarily to $\Psid_*$, it turns out that $\Psid_!$ is an equivalence, called \emph{miraculous duality}, see \cite{miraculous}.

\sssec{}

Let us know invoke \cite[Lemma 2.1.9]{DL}, which states that the essential image of $\TBunG$ equals $\Dmod(\Bun_G)^\stargen$. Combining this result with Chen's theorem and with Theorem \ref{mainthm:star}, we deduce that $\DL_G$ is a $\Sph_G$-linear endofunctor of $\Dmod(\Bun_G)$ whose essential image is contained in $\Dmod(\Bun_G)^\temp$. This yields:

\begin{cor} \label{cor:DL}
For any reductive group $G$, the functor $\DL_G: \Dmod(\Bun_G) \to \Dmod(\Bun_G)$ annihilates all anti-tempered objects.
\end{cor}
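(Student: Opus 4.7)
The plan is to deduce the corollary as a purely formal consequence of the two properties of $\DL_G$ assembled in the paragraph immediately preceding the statement: (i) $\DL_G$ is $\Sph_G$-linear, and (ii) its essential image is contained in $\ltemp \Dmod(\Bun_G)$. Both have already been established by combining Chen's theorem (\cite{LinChen}), \cite[Lemma 2.1.9]{DL}, and Theorem \ref{mainthm:star}; there is essentially nothing left to do beyond one short categorical observation.

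To spell it out, the key point is that the projectors $\temp$ and $\antitemp$ commute with every $\Sph_G$-linear functor. This is formal: by construction, $\C \mapsto \C^\temp$ and $\C \mapsto \C^\antitemp$ are the functors of base change along the $\Sph_G$-bimodule maps $\Sph_G \to \Sph_G^\temp$ and $\Sph_G \to \Sph_G^\antitemp$, so the corresponding endofunctors of $\C$ (obtained by composing the base change with the fully faithful inclusion back into $\C$) are natural in $\Sph_G$-linear functors.

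Now take $\F \in \lantitemp \Dmod(\Bun_G)$. Since the inclusion $\lantitemp \Dmod(\Bun_G) \hookrightarrow \Dmod(\Bun_G)$ is fully faithful, the counit of the adjunction yields $\antitemp(\F) \simeq \F$. By $\Sph_G$-linearity of $\DL_G$,
$$
\DL_G(\F) \simeq \DL_G \bigl( \antitemp(\F) \bigr) \simeq \antitemp \bigl( \DL_G(\F) \bigr).
$$
On the other hand, $\DL_G(\F)$ is tempered by (ii), so $\antitemp \bigl( \DL_G(\F) \bigr) \simeq 0$. Combining the two gives $\DL_G(\F) \simeq 0$.

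The main ``obstacle'' does not lie in this brief categorical step but in the substantial inputs it rests upon: Chen's theorem expressing $\DL_G$ via the twisted pseudo-identity $\TBunG$, the identification in \cite[Lemma 2.1.9]{DL} of the essential image of $\Psid_*$ with $\Dmod(\Bun_G)^\stargen$, and Theorem \ref{mainthm:star} itself (whose proof occupies the bulk of the paper). With those three facts in hand, the corollary is immediate.
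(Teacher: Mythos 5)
Your argument is correct and is exactly the one the paper intends: having established that $\DL_G$ is $\Sph_G$-linear with essential image inside $\ltemp\Dmod(\Bun_G)$, the paper simply says "this yields" the corollary, and you have supplied the short formal step it leaves implicit (that $\Sph_G$-linearity forces $\DL_G$ to commute with the projector $\antitemp$, while the image condition forces $\antitemp \circ \DL_G \simeq 0$, so $\DL_G$ kills anti-tempered objects). No gaps.
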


\begin{proof}
Let $\A \in \Dmod(\Bun_G)$ be anti-tempered. Since $\DL_G$ is $\Sph_G$-linear, $\DL_G(\A)$ is anti-tempered. But $\DL_G$ lands in $\Dmod(\Bun_G)^\temp$, so $\DL_G(\A)$ is also tempered.
\end{proof}

\ssec{Fully faithfulness}

In this section, we prove that $\gamma_{SL_2, B}$ is fully faithful. We exploit a few facts that hold true for any pair $(G,P)$.

\sssec{}

Define $\Glue_{G,P}$ to be the target of $\gamma_{G,P}$, that is,
$$
\Glue_{G,P}
:=
\Dmod(\Bun_G)^\temp
\utimes_{ I(G,P)^\temp}
I(G,P).
$$
Note that $\Glue_{G,P}$ can be realized as a colimit of DG categories obtained by replacing the arrows of \eqref{diag:CT-temp} with their left adjoints. In other words, the diagram
\begin{equation} 
\nonumber
\begin{tikzpicture}[scale=1.5]
\node (00) at (0,0) {$ \Dmod(\Bun_G)^\temp$};
\node (10) at (2.5,0) {$  I(G,P)^\temp$.};
\node (01) at (0,1) {$ \Glue_{G,P}$};
\node (11) at (2.5,1) {$I(G,P)$};
\path[<-,font=\scriptsize,>=angle 90]
(00.east) edge node[above] {$\Eis_P^\enh$}  (10.west); 
\path[<-,font=\scriptsize,>=angle 90]
(01.east) edge node[above] {$\Eis_P^\enh$} (11.west); 
\path[<- right hook,font=\scriptsize,>=angle 90]
(01.south) edge node[right] { } (00.north);
\path[<- right hook ,font=\scriptsize,>=angle 90]
(11.south) edge node[right] { } (10.north);
\end{tikzpicture}
\end{equation}
is a pushout of DG categories.

\sssec{}

It follows formally that $\gamma_{G,P}: \Dmod(\Bun_G) \to \Glue_{G,P}$ admits a left adjoint $(\gamma_{G,P})^L$.
By construction, the composition $ (\gamma_{G,P})^L \circ \gamma_{G,P}$ is the endofunctor of $\Dmod(\Bun_G)$ that sends $\F \in \Dmod(\Bun_G)$ to the colimit of the correspondence given by the solid arrows below: 
\begin{equation}  \label{diag:pushout of objects}
\begin{tikzpicture}[scale=1.5]
\node (00) at (0,0) {$ \temp(\F)$};
\node (10) at (2.5,0) {$ \F $.};
\node (01) at (0,.8) {$ \Eis_P^\enh \CT_P^\enh (\temp(\F))$};
\node (11) at (2.5,.8) {$\Eis_P^\enh \CT_P^\enh(\F)$};
\path[dashed,->,font=\scriptsize,>=angle 90]
(00.east) edge node[above] {$ $}  (10.west); 
\path[->,font=\scriptsize,>=angle 90]
(01.east) edge node[above] {$ $} (11.west); 
\path[->,font=\scriptsize,>=angle 90]
(01.south) edge node[right] {$\on{counit}$} (00.north);
\path[dashed,-> ,font=\scriptsize,>=angle 90]
(11.south) edge node[right] { } (10.north);
\end{tikzpicture}
\end{equation}
Of course, $(\gamma_{G,P})^L \circ \gamma_{G,P}$ admits a natural transformation to $\id_{\Dmod(\Bun_G)}$: this corresponds to the datum of an extension (functorial in $\F$) of solid diagram above to a commutative square as indicated.
If $\F \in \Dmod(\Bun_G)$ is tempered, then this diagram is obviously a pushout.

\sssec{}

Now let us focus on the case of $G = SL_2$. In this case, the proof of the fully faithfulness of $\gamma_{SL_2,B}$ reduces to showing that the square in \eqref{diag:pushout of objects} is a pushout diagram for any \emph{anti-tempered} $\F \in \Dmod(\Bun_{SL_2})$.
By definition, $\temp(\F)=0$ for such an $\F$; hence we just need to show that the natural arrow
$$
\Eis_B^\enh \CT_B^\enh(\F) \to \F
$$
is an isomorphism. Put another way, we need to show that $\CT_B^\enh$ is fully faithful on the anti-tempered subcategory $\Dmod(\Bun_{SL_2})^{\antitemp}$.
Yet equivalently, we will show that 
$$
\coker(\Eis_B^\enh \CT_B^\enh(\F) \to \F) \simeq 0
$$
for any anti-tempered $\F$.

\sssec{}

Now we recognize that the functor  
$$
\coker \bigt {
\Eis_B^\enh \CT_B^\enh \to \id_{\Dmod(\Bun_{SL_2})}
}
$$
is \emph{by definition} the Deligne-Lusztig duality functor
$\DL_{SL_2}$ for the group $SL_2$. Hence, the fully faithfulness of $\gamma_{SL_2, B}$ is a consequence of Corollary \ref{cor:DL}.

\ssec{Completion of the proof}

Our current task is to show that $\gamma:=\gamma_{SL_2, B}$ is an equivalence. In the previous section, we proved that $\gamma$ is fully faithful, so it remains to show that $\gamma^L$ is conservative.

\sssec{}

Let us begin with an explicit description of $\gamma^L$. We represent $\G \in \Glue_{G,B}$ as a triple $(\T, \M,  \eta)$ with 
\begin{eqnarray}
\nonumber
& & \T \in  \Dmod(\Bun_G)^\temp \\
\nonumber
& &  \M \in I(G,B) \\
\nonumber
& &  \eta: \temp(\M) \simeq \CT_B^\enh(\T).
\end{eqnarray}
Note that $\eta$ induces an arrow
$$
\eta': 
\Eis_B^\enh(\temp(\M)) 
\simeq
\Eis_B^\enh(\CT_B^\enh(\T)) 
\xto{\mathit{counit}}
\T.
$$
A simple calculation shows that $\gamma^L(\G)$ equals the pushout of
\begin{equation}  \label{diag:gamma left adjoint}
\begin{tikzpicture}[scale=1.5]
\node (00) at (0,0) {$ \Eis_B^\enh(\M).$};
\node (01) at (0,.8) {$ \Eis_B^\enh(\temp(\M))$};
\node (11) at (2.5,.8) {$\T$};
\path[->,font=\scriptsize,>=angle 90]
(01.east) edge node[above] {$\eta'$} (11.west); 
\path[->,font=\scriptsize,>=angle 90]
(01.south) edge node[right] {$ $} (00.north);
\end{tikzpicture}
\end{equation}

\sssec{}

We can now proceed with the proof of the conservativity of $\gamma^L$. We will prove that $\gamma \circ \gamma^L$ is conservative: so, given $\G \in \Glue_{G,B}$ such that $\gamma \circ \gamma^L (\G) \simeq 0$, we wish to show that $\G \simeq 0$. 
The assumption $\gamma \circ \gamma^L (\G) \simeq 0$ is equivalent to having $\temp(\gamma^L(\G)) \simeq 0$ and $\CT_B^\enh(\gamma^L(\G)) \simeq 0$.

\sssec{}

The explicit expression \eqref{diag:gamma left adjoint} yields $\temp(\gamma^L(\G)) \simeq \T$. It follows that 
$$
\G \simeq (0, \M, \eta: \temp(\M) \simeq 0)
$$ 
and that $\CT_B^\enh(\gamma^L(\G)) \simeq \CT_B^\enh \Eis_B^\enh(\M)$. 
In other words, $\G$ is determined by an anti-tempered object of $I(SL_2,B)$, and it remains to show that $\CT_B^\enh \Eis_B^\enh$ is conservative on $I(SL_2,B)^\antitemp$.
It suffices to show that $\Eis_B^\enh$ is conservative on $I(SL_2,B)^\antitemp$: indeed, we already know that $\CT_B^\enh$ is fully faithful when restricted to $\Dmod(\Bun_G)^\antitemp$.

\sssec{}

To this end, we use Theorem \ref{mainthm:rank1} which describes $\Dmod(\Bun_G)^\antitemp$ and $ I(G,B)^\antitemp$. The former is generated by $\omega_{\Bun_G}$ under colimits, the latter by $\omega_{\Bun_G^{B\ggen}}$.

It remains to notice that 
$$
\CT_B^\enh(\omega_{\Bun_G}) 
\simeq 
\omega_{\Bun_G^{B\ggen}}
$$
$$
\Eis_B^\enh
\bigt{ 
\omega_{\Bun_G^{B\ggen}}
}
\simeq
\omega_{\Bun_G}.
$$
The first assertion is obvious from the definitions, the second one follows from the contractibility of the fibers of the projection $\Bun_G^{B\ggen} \to \Bun_G$.
In particular, this proves:
\begin{cor}
The adjoint functors 
\begin{equation}
\nonumber
\begin{tikzpicture}[scale=1.5]
\node (a) at (0,1) {$\Eis_B^{\enh}: I(SL_2, B)^\antitemp $};
\node (b) at (3.5,1) {$\Dmod(\Bun_{SL_2})^\antitemp:
\CT_B^{\enh}$};
\path[->,font=\scriptsize,>=angle 90]
([yshift= 1.5pt]a.east) edge node[above] {$ $} ([yshift= 1.5pt]b.west);
\path[->,font=\scriptsize,>=angle 90]
([yshift= -1.5pt]b.west) edge node[below] {$ $} ([yshift= -1.5pt]a.east);
\end{tikzpicture}
\end{equation}
are mutually inverse equivalences of DG categories.
\end{cor}

\sec{Geometric Langlands for $G=SL_2$: a preview} \label{sec:preview}

In \cite{Outline}, Gaitsgory outlined a proof of the geometric Langlands conjecture for $G=GL_2$. Applying the same strategy to the case of $G=SL_2$ is problematic: in view of the disconnectedness of $Z(SL_2)$, the extended Whittaker category is nontrivial to define (see \cite[Section 9]{Outline} and \cite{ext-whit}), and consequently it is difficult to match it with the spectral side. 

In the present informal\footnote{A rigorous treatment would require much more space and will appear elsewhere.} section, we propose a few modifications to the outline \cite{Outline} that would allow to solve the case of $G=SL_2$. We count on the reader's familiarity with the general methods and ideas explained in \cite[Introduction]{Outline}. The main new ingredient is our automorphic gluing theorem, Theorem \ref{mainthm:aut-gluing}: this is designed to match the spectral gluing theorem exactly, thereby circumventing the issues with the extended Whittaker category. We combine this with two more ideas, explained below:
\begin{itemize}
\item
the fully faithfulness of the spectral Deligne-Lusztig duality when restricted to compact objects;
\item
the relation between the Steinberg D-module $\St_G \in \Dmod(\LS_G)$ and opers.
\end{itemize}
The first of these was proven in \cite{DL}, the second one is almost obvious for $G=SL_2$ (but altogether nontrivial for general $G$). Thus, summarizing the contents of this paper, we may say that the geometric Ramanujan conjecture, together with the Deligne-Lusztig dualities of \cite{DL} and \cite{LinChen}, suffices to clear the case of $SL_2$.

\ssec{Constructing the functor}

Let $G=SL_2$, so that $\Gch = PGL_2$. Let us recall the statements of the automorphic and spectral gluing theorems:
$$
\Dmod(\Bun_{G})
\simeq
\Dmod(\Bun_{G})^\temp
\utimes_{ I(G, B) ^\temp}
I(G, B),
$$
$$
\ICoh_{\Nch}(\LS_{\Gch})
\simeq
\QCoh(\LS_{\Gch})
\utimes_{
\QCoh \bigt{ (\LS_{\Gch})^\wedge_ {\LS_{\check{B}}} }
}
\ICoh_0 \bigt{ (\LS_{\Gch})^\wedge_{\LS_{\check{B}}} }.
$$
The main idea is of course to define the functor
$$
\bbL_G: 
\ICoh_{\Nch}(\LS_{\Gch})
\longto
\Dmod(\Bun_G)
$$
component-wise.

\sssec{}

The functor
$$
\LL_{G,B}:
\ICoh_0 \bigt{ (\LS_{\Gch})^\wedge_{\LS_{\check{B}}} }
\longto
I(G,B)
$$
has been defined by Raskin in his thesis and in subsequent works (see \cite{Sam-thesis} and \cite{Sam2}).
The functor
$$
\LL_{G,B}^\temp:
\QCoh \bigt{ (\LS_{\Gch})^\wedge_{\LS_{\check{B}}} }
\longto
I(G,B)^\temp
$$
is obtained by the above one (which is $\Sph_G$-linear) by restricting to the tempered subcategories. These functors ought to be equivalences.
Finally, let us come to the functor
$$
\LL_{G}^\temp:
\QCoh(\LS_{\Gch})
\longto
\Dmod(\Bun_G)^\temp.
$$
This is defined exactly as in \cite{Outline}, using the  so-called \emph{vanishing theorem}, that is, the action of $\QCoh(\LS_{\Gch})$ on $\Dmod(\Bun_G)^\temp$. Namely, $\LL_{G}^\temp$ is defined as the $\QCoh(\LS_{\Gch})$-action on a particular tempered D-module $\Poinc_! \in \Dmod(\Bun_G)$. 

\sssec{}

Next, one should prove that these three functors are compatible with gluing, and therefore they assemble to yield a functor $\LL_G: \ICoh_{\Nch}(\LS_{\Gch})
\longto
\Dmod(\Bun_G)$.

\sssec{}

Once $\LL_G$ is constructed, we need to show it is an equivalence. The method of \cite{Outline} for essential surjectivity should go through, so it remains to prove that $\LL_G$ is fully faithful. The fully faithfulness of $\bbL_{G,B}$ (and consequently of $\bbL_{G,B}^\temp$) are being taken care of by the above mentioned works of Raskin. Hence, it remains to prove:

\begin{conj} \label{conj:fully faithfulness}
The functor
$$
\LL_{G}^\temp:
\QCoh(\LS_{\Gch})
\longto
\Dmod(\Bun_G)^\temp
$$
is fully faithful.
\end{conj}

\sssec{}

We move towards the proof of this conjecture by explaining how the Hom spaces of $\QCoh(\LS_{\Gch})$ can be calculated in automorphic terms. Since $\QCoh(\LS_{\Gch})$ is compactly generated by perfect objects and since perfect objects are dualizable, it suffices to focus on
$$
\CHom_{\QCoh(\LS_{\Gch})}(\F, \O_{\LS_{\Gch}})
$$
for $\F \in \Perf(\LS_{\Gch})$.

\ssec{A digression on the Steinberg object}

\sssec{}

Now recall the spectral Deligne-Lusztig duality functor $\DL_\Gch$, introduced in \cite{DL}. This is the functor
\begin{equation} \label{eqn:spec-DL}
\DL_\Gch:
\ICoh_{\Nch}(\LSGch)
\xto{\Psi_{0 \to \Nch}}
\QCoh(\LSGch)
\xto{ \ul\St_{\Gch} \otimes -}
\QCoh(\LSGch)
\xto{\Xi_{0 \to \Nch}}
\ICoh_{\Nch}(\LSGch),
\end{equation}
where $\St_\Gch \in \Dmod(\LSGch)$ is the \emph{Steinberg D-module} (defined immediately below) and $\ul\St_\Gch \in \QCoh(\LSGch)$ its underlying quasi-coherent sheaf.
In \cite{DL}, we argued that, under Geometric Langlands, $\DL_{\Gch}$ ought to correspond to the Deligne-Lusztig functor $\DL_G$ of Section \ref{ssec:DL-definition}. Accordingly, the definition of $\St_{\Gch}$ imitates the definition of $\DL_G$: namely,
$$
\St_\Gch
:=
\coker
\Bigt{
\colim_{P \in \Par'}
(\fp_{\Pch})_!(\omega_{\LSPch})
\longto
\omega_{\LSGch}
}
\in \Dmod(\LSGch).
$$
For instance, when $\Gch = PGL_2$, this simplifies as $\St_{\Gch} \simeq \coker ((\fp_{\check{B}})_!(\omega_{\LS_{\check{B}}})
\longto
\omega_{\LSGch})$.

\sssec{}

In spite of the long definition, $\St_\Gch$ is remarkable in at least three respects.
\begin{itemize}
\item
The $\kk$-fibers of $\St_\Gch$ are either $1$-dimensional or zero, depending on whether the $\Gch$-local system in question is semi-simple or not. The full formulas appeared in \cite[Theorem D']{DL}; here we just give the three main examples in case $G=SL_2$:
\begin{equation} \label{eqn:cases-St}
\restr{\St_{\Gch}}{\sigma}
\simeq
\begin{cases}
\kk & \mbox{ if $\sigma$ is irreducible}; \\
\kk[3] & \mbox{ if $\sigma$ is trivial}; \\
0 & \mbox{ if $\sigma$ is not semi-simple}.
\end{cases}
\end{equation}

\item

Even though the first two arrows of the composition \eqref{eqn:spec-DL} are not at all fully faithful, the functor $\DL_{\Gch}$ is fully faithful when restricted to $\Coh_{\Nch}(\LSGch)$. This is \cite[Theorem E]{DL}. In particular, the functor 
\begin{equation} \label{eqn:tensor-by-St}
\Perf(\LSGch) \xto{\ul\St_{\Gch} \otimes -} \QCoh(\LSGch)
\end{equation}
is fully faithful.

\item

Finally, $\St_\Gch$ is intimately connected to the space of \emph{$\Gch$-opers}. This connection (discussed below in Section \ref{ssec:Steinberg}) is work-in-progress for general $G$, but it is almost obvious for $G$ of rank $1$. 
\end{itemize}

\ssec{Steinberg and opers} \label{ssec:Steinberg}

\sssec{}

Let us elaborate on the third item above. Consider the prestack $\LS_{\Gch}^{\Op\ggen}$ of \emph{$\Gch$-local systems on $X$ equipped with a generic oper structure}. We do not define this prestack in general, but point out that, for $\Gch = PGL_2$, an $S$-point of $\OpUnrGlob$ is:
\begin{itemize}
\item
an $S$-point $(E,\nabla)$ of $\LSGch$;
\item
a generic $\check{B}$-reduction of $E$, in the sense of \cite{Barlev}, which is not preserved by $\nabla$.
\end{itemize}
Denote by $\pi: \OpUnrGlob \to \LSGch$ the map that forgets the generic oper structure. Similarly to the case of the prestack $Y[X]^\gen$ of generic maps from $X$ to a scheme $Y$, the DG category $\Dmod(\OpUnrGlob)$ is well-defined and equipped with a renormalized push-forward functor 
$$
\pi_{*,\ren}: 
\Dmod(\OpUnrGlob)
\longto
\Dmod(\LSGch).
$$
Now we claim:

\begin{thm}
For $\Gch = PGL_2$, there is a natural isomorphism
$$
\pi_{*,\ren}
\bigt{
\omega_{\OpUnrGlob}
}
\simeq
\St_\Gch.
$$
\end{thm}

\begin{rem}
We expect this result to be true for all groups $\Gch$ (it is work in progress at the moment): see \cite{Arinkin-opers}, and \cite{opers} for a related result for classical groups.
\end{rem}

\sssec{}

Rather than giving a construction and a proof of this statement (these will appear elsewhere), we find it more instructive to look at three examples that match the three cases of \eqref{eqn:cases-St}.
Let $\sigma$ be a $\kk$-point of $\LSGch$ and set 
$$
\Op^{\gen}_{G,\sigma}
:=
\OpUnrGlob \utimes_{\LSGch}\sigma.
$$
One of the features of the renormalized push-forward is base-change: the pullback of $\pi_{*,\ren}
\bigt{
\omega_{\OpUnrGlob}
}$ along $\sigma: \Spec(\kk) \to \LSGch$ computes the Borel-Moore homology of $\Op^{\gen}_{G,\sigma}$. So, let us check that such Borel-Moore homology matches the results of \eqref{eqn:cases-St} in the three cases of $\sigma$ irreducible, trivial and not semi-simple.

\sssec{}

It follows from the definition that (at the reduced level, which is enough for the computations of Borel-Moore homology)
\begin{equation} \label{eqn:cases-St-opers}
\Op^{\gen}_{G,\sigma}
\simeq
\begin{cases}
\PP^1[X]^\gen & \mbox{ if $\sigma$ is irreducible}; \\
\PP^1[X]^\gen - \bbP^1 & \mbox{ if $\sigma$ is trivial}; \\
\AA^1[X]^\gen & \mbox{ if $\sigma$ is not semi-simple}.
\end{cases}
\end{equation}
Indeed, if $\sigma$ is irreducible, no $\check{B}$-reduction of the underlying $\Gch$-bundle can be preserved by the connection. Similarly, if $\sigma$ is not semi-simple and $\check{B}$-reduced, no other $\check{B}$-reduction of the underlying $\Gch$-bundle can be preserved by the connection: otherwise these two $\check{B}$-reductions would yield a $\Tch$-reduction of $\sigma$, contradicting semi-simplicity.

\sssec{}
Now, $\Hren(\PP^1[X]^\gen) \simeq H_*(\PP^1[X]^\gen) \simeq \kk$: this is the  homological contractibility of the space of rational maps into a flag variety, proven in \cite{Barlev} building up on \cite{contract}.
We deduce that
$$
\Hren(\PP^1[X]^\gen - \bbP^1)
\simeq
\coker
\Bigt{
H_*(\PP^1)
\longto
H_*(\PP^1[X]^\gen)
}
\simeq
\coker(\kk \oplus \kk[2] \to \kk)
\simeq
\kk[3]
$$
and that
$$
\Hren(\AA^1[X]^\gen )
\simeq
\Hren(\PP^1[X]^\gen - \pt)
\simeq
\coker
\Bigt{
H_*(\pt)
\longto
H_*(\PP^1[X]^\gen)
}
\simeq
\coker(\kk \to \kk)
\simeq
0,
$$
as desired.

\ssec{Fully faithfulness}

Let us now return to Conjecture \ref{conj:fully faithfulness} and in particular to the study of
$$
\CHom_{\QCoh(\LS_{\Gch})}(\F, \O_{\LS_{\Gch}})
$$
for $\F \in \Perf(\LS_{\Gch})$.

\sssec{}

By the fully faithfulness of \eqref{eqn:tensor-by-St}, we see that
$$
\CHom_{\QCoh(\LS_{\Gch})}(\F, \O_{\LS_{\Gch}})
\simeq
\CHom_{\QCoh(\LS_{\Gch})}
(
\F \otimes \ul\St_\Gch
,
 \ul\St_\Gch
 )
$$
and we wish to compute the latter in automorphic terms. The object $\F \otimes \ul\St_\Gch$ is, like any other object of $\QCoh(\LS_{\Gch})$, generated under colimits by perfect objects, so it suffices to study the Hom spaces
$$
\CHom_{\QCoh(\LS_{\Gch})}
(
\P
,
 \ul\St_\Gch
 )
 \hspace{.4cm}
 \mbox{for all $\P \in \Perf(\LSGch)$}.
$$

\sssec{}

Now, it is known by the so-called \emph{localization principle}, see \cite{vanishing}, that a collection of perfect generators of $\QCoh(\LSGch)$ is given by objects coming from $\Rep(\Gch)_{\Ran}$. Taking one of these for $\P$, and expressing $\ul\St_{\Gch}$ via generic opers as above, brings the computation of
$$
\CHom_{\QCoh(\LS_{\Gch})}
(
\P
,
 \ul\St_\Gch
 )
 $$
to (a variant of) the main commutative diagram of \cite[Corollary 10.4.5]{Outline}, called \emph{fundamental commutative diagram} there.

\end{document}